\newcommand*\patchAmsMathEnvironmentForLineno[1]{%
  \expandafter\let\csname old#1\expandafter\endcsname\csname #1\endcsname
  \expandafter\let\csname oldend#1\expandafter\endcsname\csname end#1\endcsname
  \renewenvironment{#1}%
     {\linenomath\csname old#1\endcsname}%
     {\csname oldend#1\endcsname\endlinenomath}}%
\newcommand*\patchBothAmsMathEnvironmentsForLineno[1]{%
  \patchAmsMathEnvironmentForLineno{#1}%
  \patchAmsMathEnvironmentForLineno{#1*}}%
\theoremstyle{plain} 
\newtheorem{thm}{Theorem}[section]
\newtheorem{prop}[thm]{Proposition}
\newtheorem{cor}[thm]{Corollary}
\newtheorem{lem}[thm]{Lemma}
\theoremstyle{definition}
\theoremstyle{remark}
\newtheorem{rem}[thm]{Remark}
\newcommand{\N}{\mathbb{N}}
\newcommand{\R}{\mathbb{R}}
\newcommand{\Z}{\mathbb{Z}}
\newcommand{\Q}{\mathbb{Q}}
\renewcommand{\P}{\mathbb{P}}
\newcommand{\E}{\mathbb{E}}
\newcommand{\1}[1]{\mathbf{1}_{#1}}
\renewcommand{\tilde}{\widetilde}
\renewcommand{\bar}{\overline}
\newcommand{\hyphen}{\textrm{-}}
\newcommand{\as}{\textrm{a.s.}}
\renewcommand{\S}{\mathbb{S}}
\begin{document}
\title[The simple random walk on supercritical percolation clusters]
	{Continuity result for the rate function\\ of the simple random walk on supercritical percolation clusters}
\author[N.~KUBOTA]{Naoki KUBOTA}
\address[N. Kubota]
	{College of Science and Technology, Nihon University, Chiba 274-8501, Japan.}
\email{kubota.naoki08@nihon-u.ac.jp}
\thanks{The author was supported by JSPS Grant-in-Aid for Young Scientists (B) 16K17620.}
\keywords{Percolation, random walk, random environment, large deviations, Lyapunov exponent}
\subjclass[2010]{60K37, 60F10}
\date{\today}

\begin{abstract}
We consider the simple random walk on supercritical percolation clusters in the multidimensional cubic lattice.
In this model, a quenched large deviation principle holds for the position of the random walk.
Its rate function depends on the law of the percolation configuration, and the aim of this paper is to study the continuity of the rate function in the law.
To do this, it is useful that the rate function is expressed by the so-called Lyapunov exponent, which is the asymptotic cost paid by the random walk for traveling in a landscape of percolation configurations.
In this context, we first observe the continuity of the Lyapunov exponent in the law of the percolation configuration, and then lift it to the rate function. 
\end{abstract}

\maketitle

\section{Introduction}

\subsection{The model}
For $d \geq 2$, we denote by $\Z^d$ the $d$-dimensional cubic lattice.
Furthermore, $\mathcal{E}^d$ is the set of all nearest-neighbor edges in $\Z^d$, i.e.,
\begin{align*}
	\mathcal{E}^d:=\bigl\{ \{ x,y \} \subset \Z^d: \|x-y\|_1=1 \bigr\},
\end{align*}
where $\|\cdot\|_1$ is the $\ell^1$-norm on $\R^d$.
Let $\omega=(\omega(e))_{e \in \mathcal{E}^d}$ denote a family of independent random variables  satisfying
\begin{align*}
	\P_p(\omega(e)=1)=1-\P_p(\omega(e)=0)=p \in [0,1].
\end{align*}
An edge $e \in \mathcal{E}^d$ is called \emph{open} if $\omega(e)=1$, and \emph{closed} otherwise.
We say that a lattice path is open if it uses only open edges.
Then, the \emph{chemical distance} $d(x,y)=d_\omega(x,y)$ between $x$ and $y$ is defined by the minimal length of an open lattice path from $x$ to $y$ in the percolation configuration $\omega$.

For $x \in \Z^d$, we denote by $\mathcal{C}_x=\mathcal{C}_x(\omega)$ the \emph{open cluster} containing $x$, i.e., the set of all vertices which are linked to $x$ by an open lattice path.
It is well known that there exists $p_c=p_c(d) \in (0,1)$ such that $\P_p$-almost surely, we have a unique infinite open cluster $\mathcal{C}_\infty=\mathcal{C}_\infty(\omega)$ with $\P_p(0 \in \mathcal{C}_\infty)>0$ whenever $p \in (p_c,1]$ (see Theorems~1.10 and 8.1 of \cite{Gri99_book} for instance).

Let $\mathcal{O}=\mathcal{O}(\omega)$ be the set of all vertices which are endpoints of open edges.
Define for $x,y \in \mathcal{O}$ with $\|x-y\|_1=1$,
\begin{align*}
	\pi_\omega(x,y):=\frac{\1{\{ \omega(\{x,y\})=1 \}}}{\sum_{\{x,y'\} \in \mathcal{E}^d}\1{\{ \omega(\{x,y'\})=1 \}}}
	\in \Bigl[ \frac{1}{2d},1 \Bigr].
\end{align*}
Then, the (discrete time) \emph{simple random walk on percolation clusters} is the Markov chain $((X_n)_{n=0}^\infty,(P_\omega^z)_{z \in \mathcal{O}})$ on $\mathcal{O}$ with the transition probabilities
\begin{align*}
	&P_\omega^z(X_0=z)=1,\\
	&P_\omega^z (X_{n+1}=y|X_n=x)=\pi_\omega(x,y).
\end{align*}

For each $x \in \Z^d$, denote by $H(x)$ the first passage time through $x$, i.e.,
\begin{align*}
	H(x):=\inf\{ n \geq0:X_n=x\}.
\end{align*}
Then, for any $\lambda \geq 0$ and $x,y \in \mathcal{O}$, we define the \emph{travel cost} $a_\lambda(x,y)=a_\lambda(x,y,\omega)$ from $x$ to $y$ as
\begin{align*}
	a_\lambda(x,y)=a_\lambda(x,y,\omega):=-\log e_\lambda(x,y,\omega),
\end{align*}
where 
\begin{align*}
	e_\lambda(x,y,\omega):=E_\omega^x\bigl[ e^{-\lambda H(y)}\1{\{ H(y)<\infty \}} \bigr].
\end{align*}
Obviously, the travel cost $a_\lambda(x,y)$ is dominated by the chemical distance $d(x,y)$ as follows:
\begin{align}\label{eq:chemi}
	d(x,y) \leq a_\lambda(x,y) \leq (\lambda+\log(2d))d(x,y),\qquad x,y \in \mathcal{O}.
\end{align}
Furthermore, the strong Markov property gives the triangle inequality
\begin{align}\label{eq:triangle}
	a_\lambda(x,z) \leq a_\lambda(x,y)+a_\lambda(y,z),\qquad x,y,z \in \mathcal{O},
\end{align}
see Lemma~{2.1} of \cite{Kub12b} for the proof.
Roughly speaking, these inequalities enable us to use the subadditive ergodic theorem for the travel cost, and we can derive the following asymptotic behavior.

\begin{prop}[{\cite[Theorem~{1.2}]{Kub12b}}]\label{prop:K_lyap}
Let $p \in (p_c,1]$ and $\lambda \geq 0$.
There exists a norm $\alpha_\lambda^p(\cdot)$ on $\R^d$ (which is called the Lyapunov exponent) such that the following hold:
\begin{itemize}
	\item $\P_p \hyphen \as$ on $\{ 0 \in \mathcal{C}_\infty \}$,
		\begin{align}\label{eq:lyap_def}
			\lim_{\substack{\|x\|_1 \to \infty\\ x \in \mathcal{C}_\infty}} \frac{a_\lambda(0,x)-\alpha_\lambda^p(x)}{\|x\|_1}=0.
		\end{align}
	\item The norm $\alpha_\lambda^p(\cdot)$ is invariant under permutations of the coordinates and under reflections in the coordinate hyperplanes,
		and has the bounds
		\begin{align}\label{eq:lyap_bd}
			\lambda\|x\|_1 \leq \alpha_\lambda^p(x) \leq \alpha_\lambda^p(\xi_1)\|x\|_1,
		\end{align}
		where $\xi_1$ is the first coordinate vector of $\R^d$.
\end{itemize} 
In particular, $\alpha_\lambda^p(x)$ is concave increasing in $\lambda$ and convex in $x$.
Moreover, it is jointly continuous in $\lambda$ and $x$.
\end{prop}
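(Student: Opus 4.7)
The plan is to follow the classical shape-theorem strategy, adapted to the travel cost $a_\lambda$ in place of the chemical distance, in the spirit of the first-passage results on supercritical percolation.

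First, I would upgrade the subadditive cost $a_\lambda$ to an ergodic quantity on $\Z^d$. Because $a_\lambda(0,y)$ is defined only when $0,y \in \mathcal{O}$ and the limit must be taken on $\mathcal{C}_\infty$, the natural move is to regularize: for each $x \in \Z^d$, pick the nearest point $\tilde x = \tilde x(\omega) \in \mathcal{C}_\infty$ (with a deterministic tie-breaking rule) and set $b_\lambda(x,y) := a_\lambda(\tilde x, \tilde y)$. The Antal--Pisztora control on supercritical percolation gives $\|\tilde x - x\|_1 = O(\log\|x\|_1)$ almost surely, so combined with \eqref{eq:chemi} this shows that $b_\lambda$ and $a_\lambda$ have the same large-scale behavior. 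Since $b_\lambda$ is a.s.\ finite, stationary in $x$ under $\P_p$, and subadditive by \eqref{eq:triangle}, Kingman's subadditive ergodic theorem yields, for each $x \in \Z^d$, a deterministic limit
\begin{align*}
	\alpha_\lambda^p(x) := \lim_{n \to \infty} \frac{1}{n} b_\lambda(0,nx)
\end{align*}
almost surely and in $L^1$.

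Next I would extend $\alpha_\lambda^p$ from $\Z^d$ to $\Q^d$ by positive homogeneity and then to $\R^d$ by continuity. The triangle inequality \eqref{eq:triangle} passes to the limit as subadditivity, so $\alpha_\lambda^p$ is a seminorm. The lower bound in \eqref{eq:lyap_bd} is inherited from the pointwise inequality $a_\lambda(0,y) \geq \lambda\|y\|_1$, which follows because $H(y) \geq d(0,y) \geq \|y\|_1$ when finite, hence $e_\lambda(0,y) \leq e^{-\lambda\|y\|_1}$; this promotes the seminorm to a norm. The upper bound and the symmetries in \eqref{eq:lyap_bd} follow from subadditivity together with the invariance of $\P_p$ under the lattice isometries that permute coordinates and reflect across hyperplanes. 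The full shape-theorem statement \eqref{eq:lyap_def} is then obtained by a standard net argument: one upgrades the directional limit to uniform convergence on a finite set of rational directions using subadditivity, and then uses the Lipschitz bound inherited from \eqref{eq:chemi} and Antal--Pisztora to interpolate on the unit sphere and to replace $x \in \Z^d$ by the nearest point of $\mathcal{C}_\infty$.

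For the analytic properties, $\lambda \mapsto e_\lambda(0,y)$ is a Laplace transform, hence log-convex in $\lambda$, so $\lambda \mapsto a_\lambda(0,y)$ is concave and trivially nondecreasing; these properties pass to the limit in $\alpha_\lambda^p(x)$. Convexity in $x$ is a consequence of subadditivity plus positive homogeneity. Joint continuity then follows because concavity forces continuity in $\lambda$ on $[0,\infty)$, while the upper bound in \eqref{eq:lyap_bd} supplies a Lipschitz estimate in $x$ that is locally uniform in $\lambda$, yielding joint continuity by a standard triangle-inequality argument. The hardest step will be the shape theorem: upgrading a one-dimensional limit to convergence along all of $\mathcal{C}_\infty$ requires controlling large deviations of $a_\lambda$ along a polynomial net of directions, together with moment estimates on the gap $\|x - \tilde x\|_1$, good enough to run Borel--Cantelli. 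This quantitative renormalization input is exactly what \cite{Kub12b} supplies.
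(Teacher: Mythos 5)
This proposition is imported wholesale from \cite[Theorem~1.2]{Kub12b}; the present paper offers no proof, so there is nothing internal to compare against line by line. That said, your outline is the standard strategy, and it is consistent with the machinery the paper deploys for the closely related Proposition~\ref{prop:modify}: regularize to the cluster via the nearest-point map (the paper's $[\cdot]_q$ is your $\tilde{\,\cdot\,}$), feed the stationary subadditive process into Kingman's theorem, extend by homogeneity and Lipschitz continuity, and prove the shape theorem by upgrading a directional limit to a uniform one along a rational net, exactly as in the proof of Proposition~\ref{prop:modify} via Lemma~\ref{lem:maximal}. Your derivations of concavity in $\lambda$ (log-convexity of the Laplace transform), convexity in $x$ (subadditivity plus homogeneity), and the lower bound via $H(y)\geq d(0,y)\geq\|y\|_1$ are all correct.

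Two small inaccuracies. First, the control on $\|\tilde x - x\|_1$ does not come from Antal--Pisztora, which concerns the chemical distance; it comes from the exponential tail of $\P_p(\mathcal{C}_\infty\cap B_1(0,t)=\emptyset)$, i.e.\ Proposition~\ref{prop:GM}(ii), which gives stretched-exponential (hence easily $O(\log\|x\|_1)$) control. Second, your argument that the lower bound $\lambda\|x\|_1\leq\alpha_\lambda^p(x)$ ``promotes the seminorm to a norm'' only bites for $\lambda>0$; at $\lambda=0$ the inequality is vacuous, and in fact $a_0(0,y)=-\log P_\omega^0(H(y)<\infty)$ grows only logarithmically in transient dimensions, so the norm claim at $\lambda=0$ requires a separate (and more delicate) justification than the one you gave. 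Since the paper's downstream results only use $\lambda>0$, this gap is not consequential here, but it is a gap in your proposed proof of the statement as literally written.
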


The Lyapunov exponent above plays an important role in deriving large deviations for the simple random walk on supercritical percolation clusters.

\begin{prop}[{\cite[Theorem~{1.3}]{Kub12b}}]\label{prop:K_ldp}
Let $p \in (p_c,1]$ and set
\begin{align}\label{eq:rate_def}
	I^p(x):=\sup_{\lambda \geq 0}(\alpha_\lambda^p(x)-\lambda),\qquad x \in \R^d.
\end{align}
Then, $\P_p \hyphen \as$ on the event $\{ 0 \in \mathcal{C}_\infty \}$, the law of the scaled random walk $X_n/n$ obeys the following large deviation principle with the rate function $I^p$:
\begin{itemize}
	\item (Upper bound) For any closed subset $A$ of $\R^d$,
		\begin{align*}
			\limsup_{n \to \infty} \frac{1}{n} \log P_\omega^0(X_n \in nA) \leq -\inf_{x \in A} I^p(x).
		\end{align*}
	\item (Lower bound) For any open subset $B$ of $\R^d$,
		\begin{align*}
			\liminf_{n \to \infty} \frac{1}{n} \log P_\omega^0(X_n \in nB) \geq -\inf_{x \in B} I^p(x).
		\end{align*}
\end{itemize}
\end{prop}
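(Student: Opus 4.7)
The strategy is to transfer the LDP for $X_n/n$ to asymptotics of the travel cost $a_\lambda(0,\cdot)$ already controlled by Proposition~\ref{prop:K_lyap}, via the exponential Chebyshev inequality applied to hitting times. The fundamental observation is that, for every $\lambda \geq 0$ and $y \in \mathcal{O}$,
\begin{align*}
	P_\omega^0(H(y) \leq n) \leq e^{\lambda n} E_\omega^0\bigl[e^{-\lambda H(y)}\1{\{H(y)<\infty\}}\bigr] = e^{\lambda n - a_\lambda(0,y)},
\end{align*}
while $\{X_n = y\} \subset \{H(y) \leq n\}$. For the upper bound, I would sum this over the at most $(2n+1)^d$ lattice points $y \in nA \cap \mathcal{O}$ with $\|y\|_1 \leq n$, take $n^{-1}\log(\cdot)$, and use a shape-theorem strengthening of \eqref{eq:lyap_def} (uniform convergence on compacts of $a_\lambda(0,\lfloor n\cdot\rfloor)/n$ to $\alpha_\lambda^p(\cdot)$, which follows from \eqref{eq:lyap_def}, \eqref{eq:chemi}, and the continuity in Proposition~\ref{prop:K_lyap}) to conclude, on $\{0 \in \mathcal{C}_\infty\}$,
\begin{align*}
	\limsup_n \tfrac{1}{n}\log P_\omega^0(X_n \in nA) \leq \lambda - \inf_{x \in A \cap [-1,1]^d}\alpha_\lambda^p(x).
\end{align*}
A local covering of $A \cap [-1,1]^d$ by balls $B(x_0,\epsilon)$ with $\lambda=\lambda(x_0)$ chosen so that $\alpha_{\lambda(x_0)}^p(x_0) - \lambda(x_0)$ is within $\epsilon$ of $I^p(x_0)$, combined with the joint continuity of $\alpha_\lambda^p$, then yields the required $-\inf_A I^p$. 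Exponential tightness is automatic because $\|X_n\|_1 \leq n$ forces $X_n/n \in [-1,1]^d$.

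For the lower bound, I would fix $x$ in the interior of $B$ with $I^p(x)<\infty$ and choose $\lambda^* \geq 0$ attaining $I^p(x) = \alpha_{\lambda^*}^p(x) - \lambda^*$. The identity
\begin{align*}
	E_\omega^0\bigl[e^{-\lambda^* H(\lfloor nx\rfloor)}\1{\{H(\lfloor nx\rfloor)<\infty\}}\bigr] = e^{-a_{\lambda^*}(0,\lfloor nx\rfloor)} \sim e^{-n\alpha_{\lambda^*}^p(x)}
\end{align*}
furnishes the correct total $e^{-\lambda^* H}$-mass; the challenge is to show that a not-exponentially-small share of this mass sits on paths with $H(\lfloor nx\rfloor) \approx n$. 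One route is to tilt by $e^{-\lambda^* H}/e_{\lambda^*}(0,\lfloor nx\rfloor)$ and use the concavity of $\lambda \mapsto \alpha_\lambda^p(x)$ to localize $H(\lfloor nx\rfloor)/n$ near the derivative $(\partial/\partial\lambda)\alpha_\lambda^p(x)|_{\lambda=\lambda^*}$ selected by the Legendre duality in \eqref{eq:rate_def}. An alternative is to splice near-optimal open hitting paths through the triangle inequality \eqref{eq:triangle} into trajectories of tunable length, producing $P_\omega^0(X_n \in nB) \geq e^{-n(I^p(x)+\epsilon)}$ directly.

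I expect the lower bound to be the main obstacle. The upper bound is nearly mechanical once Proposition~\ref{prop:K_lyap} is available, whereas the lower bound requires quantitatively pinning $H(\lfloor nx\rfloor)$ to the scale $n$, information not encoded in the purely logarithmic statement \eqref{eq:lyap_def}. Resolving this---through either a differentiability/concentration argument for $\alpha_\lambda^p$ in $\lambda$ or an explicit path-splicing construction adapted to the percolation geometry, with both steps subject to the measure-zero caveats inherent to working on $\mathcal{C}_\infty$---is the technical heart of the proof.
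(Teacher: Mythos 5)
This proposition is quoted from \cite[Theorem~1.3]{Kub12b}; the present paper gives no proof of it, so there is nothing in the source to compare your sketch against line by line. What can be said is that your overall framework --- exponential Chebyshev on the hitting time for the upper bound, followed by a Legendre/tilting or path-surgery argument for the lower bound --- is precisely the Zerner-style strategy (\cite{Zer98b}, and \cite{Zer98a} for the Green's-function analogue) that Kubota adapted to the percolation setting. Your upper bound sketch is essentially correct: $\{X_n=y\}\subset\{H(y)\le n\}$ gives $P_\omega^0(X_n=y)\le e^{\lambda n-a_\lambda(0,y)}$, the sum over the $O(n^d)$ sites of $nA\cap[-n,n]^d\cap\mathcal{C}_\infty$ contributes only a polynomial factor, \eqref{eq:lyap_def} already is the required shape theorem (convergence is uniform in direction), and the sup-over-$\lambda$/inf-over-$x$ interchange is handled by the compactness-and-covering argument you describe.

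The lower bound, as you correctly flag, is where the work lies, and your sketch leaves a genuine gap. The naive split $e^{-a_{\lambda^*}(0,\lfloor nx\rfloor)}\le P_\omega^0(H\le n)+e^{-\lambda^* n}$ does not close, because $\alpha_{\lambda^*}^p(x)=I^p(x)+\lambda^*\ge\lambda^*$, so the subtracted term is not negligible relative to the target rate. One must instead exploit the concavity of $\lambda\mapsto\alpha_\lambda^p(x)$: for $\lambda_0$ slightly below $\lambda^*$ one bounds $E_\omega^0[e^{-\lambda^*H}\1{\{H>(1+\eta)n\}}]\le e^{-(\lambda^*-\lambda_0)(1+\eta)n}e^{-a_{\lambda_0}(0,\lfloor nx\rfloor)}$ and then needs the difference-quotient $(\alpha_{\lambda^*}^p-\alpha_{\lambda_0}^p)/( \lambda^*-\lambda_0)$ to be sufficiently close to one, which is exactly the Legendre-duality input you invoke informally; handling the possible kink at $\lambda^*$ requires an approximation argument. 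After the hitting time is pinned to $[(1-\eta)n,n]$, one still needs the dawdling step (bouncing on an open edge for the remaining $\le\eta n$ steps, cost $(2d)^{-\eta n}$) and a parity fix, both of which are routine but should be stated. Neither of your two proposed routes is wrong, but both are gestured at rather than carried out; in particular the claim that the tilted measure ``localizes $H/n$ near the derivative'' presupposes differentiability of $\alpha_\lambda^p$ in $\lambda$ at $\lambda^*$, which is not guaranteed and must be replaced by one-sided derivative estimates as in Section~\ref{subsect:rate_conti} of the present paper.
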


\begin{rem}\label{rem:Berger}
Berger et al.~\cite{BerMukOka16} obtained a large deviation result for the distribution of the empirical measures of the environment Markov chain of the simple random walk on percolation clusters.
This obeys a higher level large deviation principle than that of Proposition~\ref{prop:K_ldp}.
Therefore, via a contraction principle, Proposition~\ref{prop:K_ldp} is derived with another expression of the rate function, which is built on a certain relative entropy functional.
\end{rem}

\subsection{Main results}\label{subsect:main}
Our main results are the following continuities for the Lyapunov exponent $\alpha_\lambda^p(x)$ and the rate function $I^p(x)$ in $p$:

\begin{thm}\label{thm:lyap_conti}
Let $\lambda>0$ and $x \in \R^d$.
If $p$ and $p_n$, $n \geq 1$, belong to $(p_c,1]$ and $p_n \to p$ as $n \to \infty$, then
\begin{align}\label{eq:lyap_conti}
	\lim_{n \to \infty} \alpha_\lambda^{p_n}(x)=\alpha_\lambda^p(x).
\end{align}
\end{thm}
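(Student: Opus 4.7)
The plan is to establish $\limsup_n\alpha_\lambda^{p_n}(x)\leq\alpha_\lambda^p(x)\leq\liminf_n\alpha_\lambda^{p_n}(x)$ separately. Throughout I would work on a single probability space carrying i.i.d.\ Uniform$(0,1)$ variables $(U_e)_{e\in\mathcal{E}^d}$, setting $\omega^p(e):=\mathbf{1}_{\{U_e\leq p\}}$, so that all the $\omega^{p_n}$ and $\omega^p$ are monotonically coupled and their symmetric difference is a Bernoulli bond field of parameter $|p_n-p|\to 0$. The chemical-distance sandwich~\eqref{eq:chemi} and the triangle inequality~\eqref{eq:triangle} will be the main tools for transferring travel costs between configurations. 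Since $\alpha_\lambda^p$ is a norm that is jointly continuous in $(\lambda,x)$ and invariant under the full hyperoctahedral group by Proposition~\ref{prop:K_lyap}, it suffices to verify~\eqref{eq:lyap_conti} for $x=\xi_1$.

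For the upper bound, I would extract from the subadditive-ergodic construction in~\cite{Kub12b} an infimum representation of the form $\alpha_\lambda^p(\xi_1)=\inf_{n\geq 1}n^{-1}\,\E_p[a_\lambda(0,n\xi_1)\mathbf{1}_{G_n}]+o_n(1)$, in which $G_n$ is a regeneration/connectivity event guaranteeing $0,n\xi_1\in\mathcal{C}_\infty$ with chemical distance at most linear in $n$. By~\eqref{eq:chemi}, the random variable under the expectation is dominated by a constant multiple of $d_\omega(0,n\xi_1)\mathbf{1}_{G_n}$, whose moments are uniformly controlled in $p$ over compact subsets of $(p_c,1]$ via the well-known exponential tails of supercritical chemical distance. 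Hence for each fixed $n$ the map $p\mapsto\E_p[a_\lambda(0,n\xi_1)\mathbf{1}_{G_n}]$ is continuous at $p$, and the infimum representation gives $\limsup_n\alpha_\lambda^{p_n}(\xi_1)\leq n^{-1}\,\E_p[a_\lambda(0,n\xi_1)\mathbf{1}_{G_n}]+o_n(1)$; letting $n\to\infty$ closes this inequality.

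For the reverse inequality I would argue by contradiction. Suppose $\alpha_\lambda^{p_{n_k}}(\xi_1)\leq\alpha_\lambda^p(\xi_1)-3\delta$ along a subsequence. Applying the almost-sure convergence~\eqref{eq:lyap_def} at parameter $p_{n_k}$ inside the coupling, one extracts points $y_k$ lying near the $\xi_1$-axis with $\|y_k\|_1\to\infty$ and $a_\lambda(0,y_k,\omega^{p_{n_k}})\leq(\alpha_\lambda^p(\xi_1)-2\delta)\|y_k\|_1$. These cheap $\omega^{p_{n_k}}$-traversals must then be converted into cheap $\omega^p$-traversals by local surgery around the symmetric difference $\omega^p\triangle\omega^{p_{n_k}}$, which on coarse scales has vanishing density: each discrepancy edge is by-passed through a short open detour in $\omega^p$, and \eqref{eq:triangle}--\eqref{eq:chemi} bound the cumulative extra cost by $\varepsilon_k\|y_k\|_1$ with $\varepsilon_k\to 0$. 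This forces $a_\lambda(0,y_k,\omega^p)\leq(\alpha_\lambda^p(\xi_1)-\delta)\|y_k\|_1$ along a further subsequence, contradicting~\eqref{eq:lyap_def}.

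The main obstacle I anticipate is this surgery step. The travel cost $a_\lambda$ is not path-based: $e_\lambda(0,y,\omega)$ averages over all open lattice paths traversed by the simple random walk, so a single flipped edge can in principle modify $a_\lambda$ in a non-local way. Invoking~\eqref{eq:triangle} allows me to replace the comparison by a concrete concatenation of open subpaths, but then I need to guarantee quantitatively that such subpaths exist in $\omega^p$ with enough regularity, which will require renormalisation estimates of Antal--Pisztora/Pisztora type already underlying Proposition~\ref{prop:K_lyap}. Pushing the total detour cost $\varepsilon_k\|y_k\|_1$ genuinely below the gap $\delta\|y_k\|_1$, uniformly as $p_{n_k}\to p$, is where the bulk of the technical work will lie.
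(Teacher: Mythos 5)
Your upper--semicontinuity plan is essentially the paper's, but it leaves out a step you need: in the coupling $\omega^{p}(e)=\mathbf{1}_{\{U_e\le p\}}$ you have $\omega^{p_n}(e)\to\omega^{p}(e)$ almost surely edge by edge, but $a_\lambda(0,n\xi_1)$ (or $a_\lambda(0,n\xi_1)\mathbf{1}_{G_n}$) is a function of \emph{infinitely} many edges, so pointwise a.s.\ convergence of $\omega^{p_n}$ does not by itself give a.s.\ convergence of that observable. The paper deals with this by first passing to the modified cost $a_\lambda^q(0,kx,\omega_p)=a_\lambda([0]_q,[kx]_q,\omega_p)$ (which removes the conditioning on $\{0,kx\in\mathcal{C}_\infty\}$ cleanly; see Proposition~\ref{prop:modify}) and then replacing it by the truncated cost $\tilde a_\lambda^q$ of Subsection~\ref{subsect:lyap_u}, which depends only on edges meeting $B_1([0]_q,\rho\|kx\|_1)$. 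Lemma~\ref{lem:a_unf} shows the truncation error is $o(k)$ uniformly in $p'>q$, and only then does dominated convergence close the argument. Your $\mathbf{1}_{G_n}$ device would have to perform the same localisation, and you have not specified $G_n$ in a way that does it.

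For the lower bound your route is genuinely different from the paper's, and the obstruction you flag yourself is a real gap that your sketch does not overcome. The proposed surgery --- detouring a cheap $\omega^{p_{n_k}}$-traversal around the discrepancy edges in $\omega^p\triangle\omega^{p_{n_k}}$ --- runs into two problems. First, $a_\lambda$ is an average over \emph{all} trajectories; there is no optimal path to surgically modify, and rerouting has to be done at the level of the exponential moment of a hitting time, which \eqref{eq:triangle} does not straightforwardly permit. Second, and more quantitatively, a path of length $\sim\|y_k\|_1$ meets of order $|p_{n_k}-p|\,\|y_k\|_1$ discrepancy edges, and the $\omega^p$-detour around an individual such edge has random length with only an exponential tail; bounding the cumulative extra cost by $\varepsilon_k\|y_k\|_1$ uniformly along the subsequence is precisely the kind of uniform-in-$p$ concentration estimate the paper says in Subsection~\ref{subsect:main} it could \emph{not} obtain, which is why it chooses a different strategy. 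The paper instead proves the lower large deviation bound of Proposition~\ref{prop:lld}: it renormalises space into boxes, calls a site $(\ell,p_n)$-good if the exit cost of the corresponding $\mathcal{B}(\ell)$-translate exceeds $\ell(1-\epsilon)$, uses Lemmata~\ref{lem:c}--\ref{lem:c_n} and Proposition~\ref{prop:domi} to stochastically dominate the good-site field by supercritical Bernoulli site percolation \emph{uniformly in large $n$}, and then forces the walk to accumulate cost through a linear number of good boxes via stopping times, with no path surgery at all. That uniform renormalisation step is exactly what replaces your $\varepsilon_k\to0$ estimate, and without it your contradiction argument does not close.

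Finally, one small point: the reduction to $x=\xi_1$ only by hyperoctahedral symmetry is not enough (general directions are not in the symmetry orbit of $\xi_1$); the paper reduces first to $x\in\Z^d$, then to $x\in\Q^d$ by homogeneity, and then to $x\in\R^d$ using the uniform Lipschitz bound \eqref{eq:a_ible} from Lemma~\ref{lem:ible}, which is a necessary extra step.
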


\begin{thm}\label{thm:rate_conti}
Denote by $\mathcal{D}_{I^p}$ the effective domain of the rate function $I^p$, i.e.,
\begin{align*}
	\mathcal{D}_{I^p}:=\{ y \in \R^d:I^p(y)<\infty \}.
\end{align*}
Then, let $x$ be in the interior $(\mathcal{D}_{I^p})^o$ of $\mathcal{D}_{I^p}$.
If $p$ and $p_n$, $n \geq 1$, belong to $(p_c,1]$ and $p_n \to p$ as $n \to \infty$, then
\begin{align*}
	\lim_{n \to \infty} I^{p_n}(x)=I^p(x).
\end{align*}
\end{thm}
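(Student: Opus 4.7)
The plan is to match $\liminf_n I^{p_n}(x) \geq I^p(x)$ with $\limsup_n I^{p_n}(x) \leq I^p(x)$, using Theorem~\ref{thm:lyap_conti} as the main input. The lower bound is immediate: Theorem~\ref{thm:lyap_conti} gives $\alpha_\lambda^{p_n}(x) \to \alpha_\lambda^p(x)$ for each fixed $\lambda > 0$, so $\liminf_n I^{p_n}(x) \geq \alpha_\lambda^p(x) - \lambda$, and taking the supremum over $\lambda > 0$ (the endpoint $\lambda = 0$ is recovered by the continuity of $\alpha_\cdot^p(x)$ at $0$ asserted in Proposition~\ref{prop:K_lyap}) yields $\liminf_n I^{p_n}(x) \geq I^p(x)$.

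For the upper bound I will split the supremum defining $I^{p_n}(x)$ at a threshold $\Lambda > 0$. On the bulk $[0,\Lambda]$, each map $\lambda \mapsto \alpha_\lambda^{p_n}(x)$ is concave by Proposition~\ref{prop:K_lyap}, so the pointwise convergence on $(0,\infty)$ supplied by Theorem~\ref{thm:lyap_conti} upgrades automatically to uniform convergence on compact subsets of $(0,\infty)$, and this extends to $[0,\Lambda]$ by continuity at $0$; hence
\[
\limsup_{n\to\infty} \sup_{\lambda \in [0,\Lambda]}\bigl( \alpha_\lambda^{p_n}(x) - \lambda \bigr) \leq I^p(x).
\]

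The main obstacle is controlling the tail $\sup_{\lambda \geq \Lambda}(\alpha_\lambda^{p_n}(x) - \lambda)$ uniformly in $n$, and this is where the interior hypothesis enters. Since $\alpha_\cdot^p(x)$ is concave with $\alpha_0^p(x) \geq 0$ (by \eqref{eq:lyap_bd}), the ratio $\alpha_\lambda^p(x)/\lambda$ is nonincreasing in $\lambda > 0$; writing
\[
\beta^p(x) := \lim_{\lambda \to \infty}\frac{\alpha_\lambda^p(x)}{\lambda} = \inf_{\lambda > 0}\frac{\alpha_\lambda^p(x)}{\lambda},
\]
the map $y \mapsto \beta^p(y)$ inherits the norm properties of $\alpha_\lambda^p(\cdot)$, and the pointwise bound $\alpha_\lambda^p(y) \geq \beta^p(y)\lambda$ forces $I^p(y) = +\infty$ whenever $\beta^p(y) > 1$. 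Thus $\mathcal{D}_{I^p} \subseteq \{y : \beta^p(y) \leq 1\}$, and since the interior of the closed unit ball of a norm is its open unit ball, the assumption $x \in (\mathcal{D}_{I^p})^o$ yields the strict inequality $\beta^p(x) < 1$. Fixing $\delta > 0$ with $\beta^p(x) < 1 - \delta$ and then $\lambda_0$ large enough that $\alpha_{\lambda_0}^p(x)/\lambda_0 < 1 - 3\delta/4$, Theorem~\ref{thm:lyap_conti} applied at the single point $\lambda_0$ produces $\alpha_{\lambda_0}^{p_n}(x)/\lambda_0 < 1 - \delta/2$ for all large $n$; the monotonicity of $\alpha_\lambda^{p_n}(x)/\lambda$ in $\lambda$ then propagates this to $\alpha_\lambda^{p_n}(x) - \lambda \leq -\delta\lambda/2$ for every $\lambda \geq \lambda_0$. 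Taking $\Lambda$ large enough makes this tail arbitrarily negative, so the sup defining $I^{p_n}(x)$ is effectively attained on $[0,\Lambda]$ for large $n$, and combining with the bulk estimate finishes the proof.
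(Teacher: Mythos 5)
Your proof is correct and takes a genuinely different (and arguably cleaner) route than the paper. The paper's proof introduces the optimizers $\lambda_\pm^p(x)$, establishes their semi-continuity in $p$ (Lemma~\ref{lem:lambda}), proves a directional Lipschitz estimate for $I^{p_n}$ (Lemma~\ref{lem:Lipschitz}), and then handles the upper bound via a two-case argument ($\lambda_-^p(x)>0$ versus $\lambda_-^p(x)=0$) combined with the approximation $\delta x \nearrow x$. You instead split the supremum into a compact bulk $[0,\Lambda]$ and a tail $[\Lambda,\infty)$: on the bulk you invoke the standard fact that pointwise convergence of concave functions on an open interval upgrades to locally uniform convergence; on the tail you exploit the strict inequality $\mu^p(x)=\lim_{\lambda\to\infty}\alpha_\lambda^p(x)/\lambda<1$ (which your $\beta^p$ is, by the paper's Proposition~\ref{prop:relation}, so your inclusion $\mathcal{D}_{I^p}\subseteq\{\beta^p\leq 1\}$ is exactly Corollary~\ref{cor:shape}) together with the monotone decrease of $\lambda\mapsto\alpha_\lambda^{p_n}(x)/\lambda$ to push the tail below zero, uniformly in $n$. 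Both arguments use the interior hypothesis in essentially the same way — to obtain slack in the slope of $\alpha_\lambda^p(x)$ so that the supremum cannot run off to $\lambda=\infty$ — but yours avoids the bookkeeping with $\lambda_\pm^p$, the semi-continuity lemma, the Lipschitz lemma, and the case split, at the modest cost of invoking the uniform-convergence theorem for convex functions.

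One small gap to patch: ``this extends to $[0,\Lambda]$ by continuity at $0$'' is not automatic, since Theorem~\ref{thm:lyap_conti} only gives convergence for $\lambda>0$ and pointwise continuity of the limit at $0$ does not by itself extend uniform convergence to the closed endpoint. But the monotonicity of $\alpha_\lambda^{p_n}(x)$ in $\lambda$ fixes this cheaply: for any $\epsilon>0$ and $\lambda\in[0,\epsilon]$ one has $\alpha_\lambda^{p_n}(x)-\lambda\leq\alpha_\epsilon^{p_n}(x)\leq(\alpha_\epsilon^{p_n}(x)-\epsilon)+\epsilon$, so
\begin{align*}
\sup_{\lambda\in[0,\Lambda]}\bigl(\alpha_\lambda^{p_n}(x)-\lambda\bigr)
\leq \sup_{\lambda\in[\epsilon,\Lambda]}\bigl(\alpha_\lambda^{p_n}(x)-\lambda\bigr)+\epsilon,
\end{align*}
and now uniform convergence on the compact $[\epsilon,\Lambda]\subset(0,\infty)$ together with $\epsilon\searrow 0$ gives the bulk estimate you want. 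With this one-line repair the argument is complete.
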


\begin{rem}
In this paper, we treat the simple random walk on ``bond'' percolation clusters.
As mentioned in \cite[Remark~1.4]{Kub12b}, the same problems above are considered in the ``site'' percolation case.
To prove Theorems~\ref{thm:lyap_conti} and \ref{thm:rate_conti}, we use several estimates for bond percolation (see Subsections~\ref{subsect:est_perc} and \ref{subsect:coupling}).
It is known that these estimates are valid for site percolation, and Theorems~\ref{thm:lyap_conti} and \ref{thm:rate_conti} similarly hold in the site percolation case.
\end{rem}

Large deviations are recently interesting topics in random walks in random environments.
There are two different situations on this subject: the quenched (almost sure) and the annealed (in average) situations.
(The first one deals with time evolution of the random walk on a fixed realization of the random environment.
On the other hand, in the second one, we first average the randomness of the environment before letting the time grow.)
In the one-dimensional case, both quenched and annealed large deviations, including properties of these rate functions, have been studied well.
We refer the reader to \cite{GanZei98,GredHol94,PisPov99} and \cite{DemPerZei96,PisPovZei99} for the quenched and the annealed cases, respectively.
See also \cite[Subsections~2.4 and 2.5]{Zei04_book} as a survey of results on the aforementioned articles.
Furthermore, Comets--Gantert--Zeitouni~\cite{ComGanZei00} obtained a relation between the quenched and the annealed rate functions by solving a variational problem.
The one-dimensional large deviation principle remains attractive and its study is still in progress (see for instance~\cite{AhnPet16,Pet08_thesis,Pet10}).

For the multidimensional case, Zerner~\cite{Zer98b} first proved a large deviation principle for the random walk in i.i.d.~nestling environments.
After that, Varadhan~\cite{Var03} not only developed it to general ergodic environments, but also showed the annealed large deviation principle in i.i.d.~environments.
These results were later generalized by Rassoul-Agha et~al.~\cite{RA04,RASep11,RASep15_book,RASepYil13}.
Moreover, Peterson et al.~\cite{PetZei09b,Yil11,YilZei10} recently studied some properties of the quenched and the annealed rate functions.
However, we still have less information about these rate functions than is known in the one-dimensional case.
Furthermore, all these works need some moment assumptions for random environments.
In particular, those are satisfied if we have ellipticity, i.e., the random walk can always move to any nearest-neighbor site.
The random conductance model (which is a specific class of random walks in random environments) often treats the non-elliptic case, see \cite{Kum14_book} and the references given there for details.
Its typical case is the simple random walk on percolation clusters.
For this reason, the study of this large deviation principle has not progressed well.
As mentioned in Proposition~\ref{prop:K_ldp} and Remark~\ref{rem:Berger}, we succeeded in overcoming the lack of ellipticity by using some classical results about the geometry of percolation clusters, but still do not have enough information for the rate function.
Therefore, Theorem~\ref{thm:rate_conti} is meaningful in the investigation of rate functions for random walks in random environments.

Finally, let us comment on earlier works related to our results.
Theorem~\ref{thm:lyap_conti} plays the key role of the proof of Theorem~\ref{thm:rate_conti}, and similar continuity results to Theorem~\ref{thm:lyap_conti} have already been studied in the first passage percolation, the simple random walk in random potentials and the directed polymer in random environments. (See \cite{Cox80,CoxKes81,GarMarProThe17,ComFukNakYos15,Nak18,Le17} for details.)
For the counterpart of the Lyapunov exponent in each model above, we can easily derive the upper semi-continuity as a direct consequence of the subadditive ergodic theorem.
On the other hand, the proof of the lower semi-continuity is relatively difficult.
To this end, we have (at least) the following two approaches:
\begin{enumerate}
	\item Derive a lower large deviation estimate for the travel cost and combine it with a renormalization argument.
	\item Obtain a concentration inequality for the travel cost and estimate the difference between the expectation of the travel cost and the counterpart
		of the Lyapunov exponent (We call that kind of estimate the non-random fluctuation).
\end{enumerate}
In our model, the second approach may work, but it is hard to take constants appearing in the concentration inequality and the non-random fluctuation uniformly in the law of the percolation configuration.
This is critical to our work because we have to treat several different laws of percolation configurations simultaneously.
Hence, to prove Theorem~\ref{thm:lyap_conti}, the first approach is adapted in this paper.
For our desired lower large deviation (see Proposition~\ref{prop:lld} below), it is useful to apply the strategy which was taken by Ahlberg~\cite[Section~3]{Ahl15} for the first passage percolation.
However, we have to modify it due to the following difference between the first passage percolation and our model:
In the first passage percolation, we can take at least one optimal path realizing the travel cost under several conditions and only have to observe the lower large deviation along an optimal path.
On the other hand, in our model, the travel cost is defined by the average over all random walk trajectories and we cannot fix a random walk trajectory realizing the travel cost.

\subsection{Organization of the paper}\label{subsec:org}
Let us now describe how the present article is organized.
In Section~\ref{sect:prelim}, we summarize results of percolation on $\Z^d$ for our convenience.
Subsection~\ref{subsect:est_perc} provides some estimates for the chemical distance and the open clusters.
Moreover, we recall stochastic domination between locally dependent fields and the independent Bernoulli site percolation.
In Subsection~\ref{subsect:coupling}, a coupling is built to treat several percolation configurations simultaneously.
In Subsection~\ref{subsect:modigy}, we introduce a modification of the travel cost, which measures the cost of traveling from a hole to another one in the infinite cluster.

The goal of Section~\ref{sect:lyap_conti} is to prove Theorem~\ref{thm:lyap_conti}.
We divide the proof into two parts: the upper and the lower semi-continuities, which are stated in Subsections~\ref{subsect:lyap_u} and \ref{subsect:lyap_l}, respectively.
The upper semi-continuity is a direct consequence of the subadditive ergodic theorem for the modified travel cost.
On the other hand, the proof of the lower semi-continuity is more difficult than that of the upper semi-continuity.
This difficulty comes from the reasons stated at Subsection~\ref{subsect:main}:
Handling several different laws of percolation configurations at the same time and the non-existence of an optimal path realizing the travel cost.

The aim of Section~\ref{sect:rate_conti} is to show Theorem~\ref{thm:rate_conti}.
To this end, we first determine the effective domain of the rate function.
Actually, it can be described by the so-called time constant for the chemical distance, and in Subsection~\ref{subsect:relation}, we verify this claim via the relation between the Lyapunov exponent and the time constant for the chemical distance.
The proof of Theorem~\ref{thm:rate_conti} is given in Subsection~\ref{subsect:rate_conti}.
Its main tools are the concavity of the Lyapunov exponent and Theorem~\ref{thm:lyap_conti}.

We close this section with some general notation.
Write $\|\cdot\|_1$ and $\|\cdot\|_\infty$ for the $\ell^1$ and $\ell^\infty$-norms on $\R^d$.
In addition, for $i \in \{1,\infty\}$, $x \in \R^d$ and $r>0$, $B_i(x,r)$ is the $\ell^i$-ball in $\R^d$ of center $x$ and radius $r$, i.e.,
\begin{align*}
	B_i(x,r):=\{ y \in \R^d:\|y-x\|_i \leq r \}.
\end{align*}
Moreover, the notation $\S^{d-1}$ means the $\ell^1$-unit sphere.
Throughout this paper, we use $c$, $c'$, $C$, $C'$ and $C_i$, $i=1,2,\dots$, to denote constants with $0<c,c',C,C',C_i<\infty$.
In the present article, several percolation configurations are dealt with at the same time.
Hence, we often use the notation $C_i^{(p)}$ to emphasize dependence only on a parameter $p$.

\section{Preliminaries}\label{sect:prelim}

\subsection{Some estimates for percolation}\label{subsect:est_perc}
The following proposition presents estimates for the chemical distance and the open clusters.

\begin{prop}[{\cite[(2.2) and Corollary~2.2]{GarMar10}} and {\cite[Theorem~8.65]{Gri99_book}}]\label{prop:GM}
For $p \in (p_c,1]$, there exist constants $\Cl{AP} \geq 1$, $\Cl{GM1}$ and $\Cl{GM2}$ such that the following results (i)--(iii) hold: 
\begin{enumerate}
	\item  For all $x \in \Z^d$ and $t \geq \Cr{AP}\|x\|_1$,
		\begin{align*}
			\P_p(t \leq d(0,x)<\infty) \leq \Cr{GM1}e^{-\Cr{GM2}t}.
		\end{align*}
	\item For all $t \geq 0$,
		\begin{align*}
			\P_p(\mathcal{C}_\infty \cap B_1(0,t)=\emptyset) \leq \Cr{GM1}e^{-\Cr{GM2}t}.
		\end{align*}
	\item For all $t \geq 0$ and $x \in \Z^d$,
		\begin{align*}
			\P_p(t \leq \#\mathcal{C}_x<\infty) \leq \Cr{GM1}e^{-\Cr{GM2}t^{1-1/d}},
		\end{align*}
		where $\# A$ denotes the cardinality of a set $A$.
\end{enumerate}
\end{prop}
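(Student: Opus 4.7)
The plan is to reprove these three standard supercritical-percolation estimates via renormalization and combinatorial surface arguments; none of them requires a new idea, but assembling them does take some care.

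For (i), I would use the Antal--Pisztora renormalization. Fix a scale $L$ and tile $\Z^d$ by $L$-cubes; declare a cube \emph{good} if it contains a unique crossing open cluster and that crossing cluster is joined, by paths staying within the cube and each of its $3^d-1$ neighbors, to the crossing cluster of every neighbor. By the Grimmett--Marstrand slab-percolation theorem, for any $p>p_c$ and any $\rho<1$ one can take $L$ large enough that the good-cube field stochastically dominates an i.i.d.\ site field of parameter $\rho$. On the event $\{d(0,x)<\infty\}$, the clusters of $0$ and $x$ either are small (controlled by (iii)) or connect to the crossing cluster of the good cube containing them; concatenating the inside-good-cube pieces along a shortest good-cube lattice path from the cube of $0$ to the cube of $x$ produces an open path of length at most $C\|x\|_1$. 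Hence $\{\Cr{AP}\|x\|_1\le d(0,x)<\infty\}$ forces an unusually long bad-cube chain between $0$ and $x$, whose probability is exponentially small in the length by a Peierls argument, and budgeting $t$ instead of $\|x\|_1$ yields the $e^{-\Cr{GM2}t}$ tail.

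For (ii), the same renormalization suffices: the good-cube field is deep in its supercritical phase, and after local surgery the infinite cluster of the good-cube field sits inside $\mathcal{C}_\infty$. Thus $\{\mathcal{C}_\infty \cap B_1(0,t)=\emptyset\}$ forces the ball of radius $\sim t/L$ in the renormalized lattice to miss the infinite good-cube cluster, an event with probability decaying exponentially in $t$. For (iii), I would follow Grimmett's Theorem~8.65: if $\#\mathcal{C}_0=n<\infty$, a Peierls-type argument produces a blocking surface of closed edges separating $\mathcal{C}_0$ from infinity; the $\ell^1$-isoperimetric inequality on $\Z^d$ forces this surface to have area at least $c_d n^{1-1/d}$, and a combinatorial bound on the number of such surfaces surrounding $0$, combined with the fact that each face is closed with probability $1-p$ (handled via sharpness of the transition for the dual / blocking configuration), yields the desired $e^{-c n^{1-1/d}}$ bound.

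The main obstacle is the renormalization step behind (i): calibrating good cubes so that bad-cube chains are rare enough to absorb the combinatorial entropy of lattice paths in $\Z^d$, and so that a good-cube path of length $O(\|x\|_1)$ can actually be straightened into an open lattice path of comparable length. Once this is in place, (ii) is essentially a corollary of the same construction, and (iii) is an independent combinatorial input via isoperimetry and a Peierls count of blocking surfaces.
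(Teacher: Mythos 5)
The paper does not prove Proposition~\ref{prop:GM}: it is quoted directly from Garet--Marchand~\cite{GarMar10} (parts (i) and (ii)) and Grimmett~\cite{Gri99_book} Theorem~8.65 (part (iii)), so there is no ``paper's own proof'' to compare against. What you have written is a sketch of how one would establish these bounds from scratch, and it is worth assessing on its own terms.

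Your outline of (i) and (ii) is essentially the correct route and is what Antal--Pisztora/Garet--Marchand actually do: a Grimmett--Marstrand-calibrated renormalization makes the good-box field dominate a high-density Bernoulli site field, a good-box lattice path can be straightened into a comparably short open path, and the bad-box entropy is absorbed by the Peierls bound at the renormalized scale. The only caveat is bookkeeping: the event $\{t\le d(0,x)<\infty\}$ requires some care when the clusters of $0$ and $x$ are small but still infinite, and you implicitly lean on (iii) there, so the logical ordering should be (iii) first, or one must argue independently.

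There is, however, a genuine gap in your argument for (iii). The naive Peierls count you describe --- separating surfaces of dual plaquettes, isoperimetric lower bound $c_d n^{1-1/d}$ on the surface area, each plaquette closed with probability $1-p$, entropy $C^m$ for surfaces of size $m$ --- converges only if $(1-p)C<1$, i.e.\ for $p$ sufficiently close to $1$. The proposition asserts the bound for every $p\in(p_c,1]$, and for $p$ near $p_c$ the quantity $1-p$ is of order one, so this sum diverges. Saying the issue is ``handled via sharpness of the transition for the dual/blocking configuration'' is not a fix: in $d\ge 3$ the dual object is a plaquette surface, there is no self-duality, and Menshikov/Aizenman--Barsky sharpness concerns subcritical connectivity of the primal model, not an exponential decay for large separating surfaces at every supercritical $p$. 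The actual proof of Theorem~8.65 (after Kesten--Zhang and Chayes--Chayes--Newman) proceeds by the \emph{same} renormalization you already set up for (i) and (ii): coarse-grain so that the effective block density is as close to $1$ as desired, and run the Peierls/isoperimetry argument at the block scale where the ``closed-block'' probability really is small. You should either route (iii) through the block construction you already built, or explicitly cite the Kesten--Zhang argument rather than gesture at sharpness.
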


A part of the proof of Theorem~\ref{thm:lyap_conti} relies on a renormalization argument.
Hence, it is convenient to recall the concept of stochastic domination.
Let $Y=(Y_v)_{v \in \Z^d}$ and $Z=(Z_v)_{v \in \Z^d}$ be families of random variables taking values in $\{0,1\}$.
We say that $Y$ \emph{stochastically dominates} $Z$ if
\begin{align*}
	\E[f(Y)] \geq \E[f(Z)]
\end{align*}
for all bounded, increasing, measurable functions $f:\{0,1\}^{\Z^d} \to \R$.
Furthermore, a family $Y=(Y_v)_{v \in \Z^d}$ of random variables is said to be \emph{finitely dependent} if there exists $N>0$ such that any two sub-families $(Y_{v_1})_{v_1 \in \Lambda_1}$ and $(Y_{v_2})_{v_2 \in \Lambda_2}$ are independent whenever $\Lambda_1,\Lambda_2 \subset \Z^d$ satisfies that $\|v_1-v_2\|_1>N$ for all $v_1 \in \Lambda_1$ and $v_2 \in \Lambda_2$.

Under the preparation above, we state a stochastic domination for locally dependent fields with the independent Bernoulli site percolation.

\begin{prop}[{\cite[Theorem~7.65]{Gri99_book} or \cite[Theorem~B26]{Lig99_book}}]\label{prop:domi}
Suppose that $Y=(Y_v)_{v \in \Z^d}$ is a finitely dependent family of random variables taking values in $\{ 0,1 \}$.
For a given $\kappa \in (0,1)$, $Y$ stochastically dominates the independent Bernoulli site percolation $\eta_\kappa=(\eta_\kappa(v))_{v \in \Z^d}$ of parameter $\kappa$, provided $\inf_{v \in \Z^d} P(Y_v=1)$ is sufficiently close to one.
\end{prop}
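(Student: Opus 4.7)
The plan is to construct an explicit coupling of $Y$ with an iid Bernoulli$(\kappa)$ field on a common probability space. I would enumerate $\Z^d$ as $\{v_1, v_2, \dots\}$ (for instance, by increasing $\ell^\infty$-norm with lexicographic tiebreaking), introduce iid Uniform$[0,1]$ variables $U_1, U_2, \dots$, and set $\eta_\kappa(v_k) := \mathbf{1}\{U_k \leq \kappa\}$, which gives an iid Bernoulli$(\kappa)$ field. The task is then to define $Y_{v_k}$ sequentially from $U_1, \ldots, U_k$ (together with some independent auxiliary randomness) so that $(Y_v)_{v \in \Z^d}$ has its prescribed joint distribution while $Y_{v_k} \geq \eta_\kappa(v_k)$ pointwise.

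The coupling is feasible as soon as one establishes the uniform conditional lower bound
\begin{equation*}
q_k := \operatorname{ess\,inf} P\bigl(Y_{v_k}=1 \,\big|\, Y_{v_1},\ldots,Y_{v_{k-1}}\bigr) \;\geq\; \kappa, \qquad k \geq 1.
\end{equation*}
Indeed, given this bound, at stage $k$ one may use $U_k$ together with an additional independent Uniform$[0,1]$ variable to define $Y_{v_k}$ in such a way that its conditional distribution given the past matches the prescribed one while the event $\{U_k \leq \kappa\}$ is forced to lie in $\{Y_{v_k}=1\}$; such an allocation exists precisely because the conditional probability of $\{Y_{v_k}=1\}$ is at least $\kappa$. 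The resulting joint law displays $(Y_v)_{v \in \Z^d}$ with its original distribution, $(\eta_\kappa(v))_{v \in \Z^d}$ iid Bernoulli$(\kappa)$, and $Y_v \geq \eta_\kappa(v)$ pointwise, which is the desired stochastic domination.

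Proving $q_k \geq \kappa$ is the heart of the argument and constitutes the main obstacle. By finite dependence with range $N$, the conditional probability above depends only on those $Y_{v_j}$ with $j<k$ and $\|v_j - v_k\|_1 \leq N$; denote this set of nearby past sites by $S_k$, whose cardinality is bounded by some constant $M = M(N,d)$. For any configuration $(y_w)_{w \in S_k} \in \{0,1\}^{S_k}$, Bayes' formula yields
\begin{equation*}
P\bigl(Y_{v_k}=0 \,\big|\, Y_w = y_w,\, w \in S_k\bigr) \;\leq\; \frac{1-p}{P(Y_w = y_w,\, w \in S_k)},
\end{equation*}
where $p := \inf_v P(Y_v = 1)$. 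The difficulty is to bound the denominator from below uniformly in $y$: for configurations with many zeros it could a priori be very small. My plan to circumvent this is a bootstrap along the enumeration — first prove a weaker intermediate bound $q_j \geq \rho(p)$ with $\rho(p) \to 1$ as $p \to 1$ for all earlier indices $j<k$, then exploit finite dependence to factor the denominator into at most $M$ successive conditional factors and deduce $P(Y_w = y_w,\, w \in S_k) \geq \rho(p)^{M}$ uniformly in $y$. Substituting back gives $q_k \geq 1 - (1-p)\rho(p)^{-M}$, which by choosing $p$ sufficiently close to $1$ can be made to exceed any prescribed $\kappa \in (0,1)$.
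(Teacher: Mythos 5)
The paper does not prove Proposition~\ref{prop:domi}; it simply cites the Liggett--Schonmann--Stacey theorem from Grimmett and Liggett. Your sequential coupling hinges on the claim $q_k := \mathrm{ess\,inf}\, P(Y_{v_k}=1 \mid Y_{v_1},\ldots,Y_{v_{k-1}}) \geq \kappa$, and this claim is false, so the approach cannot work as written. A minimal counterexample: take $d=1$, $N=1$, $\epsilon>0$ small, let $Z$ be Bernoulli$(1-\epsilon)$, set $Y_0=Y_1=Z$, and let $(Y_j)_{j\neq 0,1}$ be i.i.d.\ Bernoulli$(1-\epsilon)$ independent of $Z$. This is a $1$-dependent field with $\inf_v P(Y_v=1)=1-\epsilon$, yet $P(Y_1=1\mid Y_0=0)=0$, so $q_k=0$ no matter how close $p=1-\epsilon$ is to one. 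Stochastic domination by i.i.d.\ Bernoulli$(\kappa)$ does nevertheless hold here (checking against increasing functions on $\{0,1\}^2$ shows it holds precisely for $\kappa\leq 1-\sqrt{\epsilon}$), which makes the point sharply: the monotone coupling guaranteed by Strassen's theorem cannot in general be built site-by-site from the conditional law given the \emph{exact} past configuration, so no bound of the form $q_k\geq\kappa$ is available or needed.

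Your bootstrap to prove $q_k\geq\kappa$ also contains an internal error. In the step asserting $P(Y_w=y_w,\ w\in S_k)\geq\rho(p)^M$ uniformly in $y$, any telescoping factor $P(Y_w=0\mid\cdots)$ at a site with $y_w=0$ is bounded \emph{above} by $1-\rho(p)$, not below by $\rho(p)$; for configurations with zeros the denominator in your Bayes estimate can be as small as order $(1-p)^{|S_k|}$, and the estimate collapses. A secondary gap is the assertion that the conditional probability depends only on the $Y_{v_j}$ with $\|v_j-v_k\|_1\leq N$: the paper's notion of finite dependence gives unconditional independence of well-separated blocks, not the Markov-type conditional independence you invoke, and without that reduction the number of factors is unbounded. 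The genuine Liggett--Schonmann--Stacey proof avoids conditioning on the exact nearby past; one standard device is to pass through the pointwise minimum of $Y$ with an auxiliary independent Bernoulli field and carry out the domination in two stages, which is precisely what neutralizes counterexamples of the kind above.
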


\subsection{Coupling of percolation configurations}\label{subsect:coupling}
To treat several different Lyapunov exponents simultaneously, it is useful to introduce a coupling of percolation configurations.
To do this, for $p \in [0,1]$, we write $F_p$ for the distribution function of $\P_p(\omega(e) \in \cdot)$, i.e.,
\begin{align*}
	F_p(t):=\P_p(\omega(e) \leq t),\qquad t \in \R.
\end{align*}
Independently of $\omega$, let $(U(e))_{e \in \mathcal{E}^d}$ be independent random variables with the uniform distribution on $(0,1)$.
We now define for $p \in [0,1]$,
\begin{align*}
	\omega_p(e):=F_p^{-1}(U(e)),\qquad e \in \mathcal{E}^d,
\end{align*}
where $F_p^{-1}$ is the pseudo-inverse function of $F_p$:
\begin{align*}
	F_p^{-1}(s):=\sup\{ t \in \R:F_p(t)<s \},\qquad s \in \R,
\end{align*}
with the convention $\sup\emptyset:=0$.
It is well known that this coupling has the following properties:
\begin{itemize}
	\item The random variables $\omega_p=(\omega_p(e))_{e \in \mathcal{E}^d}$ are independent and identically distributed
		with $\P(\omega_p(e)=1)=1-\P(\omega_p(e)=0)=p$.
	\item Suppose that $p$ and $p_n$, $n \geq 1$, belong to $[0,1]$.
		If $p_n \to p$ as $n \to \infty$, then for all $e \in \mathcal{E}^d$, $\lim_{n \to \infty} \omega_{p_n}(e)=\omega_p(e)$ holds almost surely.
		In particular, for each $e \in \mathcal{E}^d$, with probability one, $\omega_{p_n}(e)$ coincides with $\omega_p(e)$, provided $n$ is large enough.
\end{itemize}

\subsection{Modification of the travel cost}\label{subsect:modigy}
For $q \in (p_c,1]$ and $x \in \Z^d$, let $[x]_q$ denote the closest point to $x$ in $\mathcal{C}_\infty(\omega_q)$ for the $\ell^1$-norm, with a deterministic rule to break ties.
Let $p_c<q \leq p \leq 1$.
We now define for $\lambda \geq 0$ and $x,y \in \Z^d$,
\begin{align*}
	a_\lambda^q(x,y,\omega_p):=a_\lambda([x]_q,[y]_q,\omega_p).
\end{align*}
The triangle inequality is inherited from the original travel cost:
\begin{align*}
	a_\lambda^q(x,z,\omega_p) \leq a_\lambda^q(x,y,\omega_p)+a_\lambda^q(y,z,\omega_p),\qquad x,y,z \in \Z^d.
\end{align*}

The aim of this subsection is to show the following proposition.

\begin{prop}\label{prop:modify}
Let $p_c<q \leq p \leq 1$ and $\lambda \geq 0$.
For all $x \in \Z^d$, almost surely and in $L^1$,
\begin{align*}
	\alpha_\lambda^p(x)
	&= \lim_{k \to \infty} \frac{1}{k} a_\lambda^q(0,kx,\omega_p)\\
	&= \lim_{k \to \infty} \frac{1}{k} \E[a_\lambda^q(0,kx,\omega_p)]
		= \inf_{k \geq 1} \frac{1}{k} \E[a_\lambda^q(0,kx,\omega_p)].
\end{align*}
In particular, we have almost surely,
\begin{align*}
	\lim_{\|x\|_1 \to \infty} \frac{a_\lambda^q(0,x,\omega_p)-\alpha_\lambda^p(x)}{\|x\|_1}=0.
\end{align*}
\end{prop}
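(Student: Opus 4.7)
My strategy is to apply Kingman's subadditive ergodic theorem to the process along the direction $x$ and then identify the resulting limit with $\alpha_\lambda^p(x)$ by invoking Proposition~\ref{prop:K_lyap} shifted to the base point $[0]_q$.

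First, I would fix $x \in \Z^d$ (the case $x = 0$ being trivial) and define the subadditive array $Z_{m,n} := a_\lambda^q(mx, nx, \omega_p)$ for $0 \leq m \leq n$. Subadditivity is immediate from the triangle inequality~\eqref{eq:triangle}; stationarity under the shift $\tau_x$ follows because the joint field $(\omega_p, \omega_q)$ is i.i.d.\ across edges by construction of the coupling in Subsection~\ref{subsect:coupling}, and hence is ergodic under $\tau_x$. The integrability assumption $\E[Z_{0,1}] < \infty$ must be checked separately; here I would combine $\omega_q \leq \omega_p$ with~\eqref{eq:chemi} to get $Z_{0,1} \leq (\lambda + \log(2d))\, d([0]_q, [x]_q, \omega_q)$, and then control $\E[d([0]_q, [x]_q, \omega_q)]$ via Proposition~\ref{prop:GM}(i),(ii): the positions $[0]_q$ and $[x]_q$ have exponential tails around $0$ and $x$ respectively, and for each fixed pair of admissible positions the chemical distance inside $\mathcal{C}_\infty(\omega_q)$ has mean linear in the Euclidean distance by Proposition~\ref{prop:GM}(i).

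Once the hypotheses are verified, Kingman's theorem yields a deterministic constant $\alpha$ with
\[
    \frac{Z_{0,n}}{n} \longrightarrow \alpha \quad \text{a.s.\ and in } L^1,
\]
and moreover $\alpha = \lim_n n^{-1}\E[Z_{0,n}] = \inf_{n \geq 1} n^{-1}\E[Z_{0,n}]$, the last equality by subadditivity of the expectations. To identify $\alpha$ with $\alpha_\lambda^p(x)$, I would apply Proposition~\ref{prop:K_lyap} at the base point $[0]_q$: translation-invariance of $\P_p$ together with a countable union over possible values of $[0]_q \in \Z^d$ shows that, almost surely, the limit statement of Proposition~\ref{prop:K_lyap} holds based at every $y_0 \in \mathcal{C}_\infty(\omega_p)$, hence in particular at $y_0 = [0]_q$ (which lies in $\mathcal{C}_\infty(\omega_q) \subseteq \mathcal{C}_\infty(\omega_p)$). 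Applying this along $w_k := [kx]_q - [0]_q$, using Proposition~\ref{prop:GM}(ii) and Borel--Cantelli to get $\|[kx]_q - kx\|_1 = O(\log k)$ a.s., and noting that the norm property of $\alpha_\lambda^p$ together with \eqref{eq:lyap_bd} gives $\alpha_\lambda^p(w_k) = k\alpha_\lambda^p(x) + O(\log k)$ and $\|w_k\|_1 = k\|x\|_1 + O(\log k)$, I divide by $k$ to identify $\alpha = \alpha_\lambda^p(x)$.

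For the final ``in particular'' statement, I would rerun the based-at-$[0]_q$ shape theorem along arbitrary $x \in \Z^d$ with $\|x\|_1 \to \infty$, after strengthening via a Borel--Cantelli argument summed over $x \in \Z^d$ the estimate $\|[x]_q - x\|_1 = O(\log\|x\|_1)$ so that it holds uniformly in $x$. The main technical obstacle I foresee lies in the integrability step: rigorously bounding $\E[d([0]_q, [x]_q, \omega_q)]$ requires carefully decoupling the joint law of the pair $([0]_q, [x]_q)$ from the conditional chemical distance between them, via a summation of the exponential-tail estimates of Proposition~\ref{prop:GM} over admissible positions of these two random vertices.
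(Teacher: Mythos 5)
Your plan is correct and rests on the same pillar as the paper's proof, namely Kingman's subadditive ergodic theorem applied to the array $a_\lambda^q(ix,jx,\omega_p)$, with the integrability input being exactly the content of the paper's Lemma~\ref{lem:ible}. Where you diverge is in the two remaining steps. To identify the Kingman constant, you translate Proposition~\ref{prop:K_lyap} to the random base point $[0]_q$ (via translation invariance plus a countable union over $y_0\in\Z^d$) and control $\|[kx]_q-kx\|_1=O(\log k)$ by Borel--Cantelli; the paper instead works directly on the positive-probability event $\{0\in\mathcal{C}_\infty(\omega_q)\}$, where $[0]_q=0$ and $[kx]_q=kx$ along the infinite subsequence $\{k:kx\in\mathcal{C}_\infty(\omega_q)\}$, so $a_\lambda^q$ literally coincides with $a_\lambda$ and Proposition~\ref{prop:K_lyap} identifies the constant with no translation argument. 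The paper's route is a little leaner here since it sidesteps the based shape theorem entirely. For the ``in particular'' statement you again lean on the based-at-$[0]_q$ shape theorem plus a uniform $O(\log\|x\|_1)$ hole bound, which is a clean direct argument; the paper instead runs a compactness/contradiction argument with rational directions, Lemma~\ref{lem:maximal}, and the first assertion along lattice directions. Both work, and your version of this last step is arguably more transparent; the one thing you should make sure to spell out is that the a.s. statement ``for every $y_0\in\mathcal{C}_\infty(\omega_p)$ the shape theorem holds based at $y_0$'' really does follow from intersecting the full-measure translated events, since $[0]_q$ is a random point chosen after the fact.
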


For the proof, let us first derive some estimates for the chemical distance from a hole to another one in the infinite cluster.

\begin{lem}\label{lem:ible}
Let $q \in (p_c,1]$ and $\gamma>0$.
There exists a constant $\Cl{ible}=\Cr{ible}^{(q)}$ such that for all $x \in \Z^d$,
\begin{align*}
	\E[d_{\omega_q}([0]_q,[x]_q)^\gamma ] \leq \Cr{ible}\|x\|_1^\gamma.
\end{align*}
In particular, we have for all $p \in [q,1]$, $\lambda \geq 0$ and $x \in \R^d$,
\begin{align}\label{eq:a_ible}
	\alpha_\lambda^p(x) \leq \Cr{ible}^{(q)}(\lambda+\log(2d))\|x\|_1.
\end{align}
\end{lem}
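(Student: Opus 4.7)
For the moment bound, I plan to derive an exponential tail $\P(d_{\omega_q}([0]_q, [x]_q) > t) \le C e^{-ct}$ valid on the range $t \ge 2\Cr{AP}\|x\|_1$, and then integrate. Setting $\alpha := 1/(4\Cr{AP})$ and decomposing $\{d_{\omega_q}([0]_q,[x]_q) > t\}$ according to whether $\|[0]_q\|_1$ or $\|[x]_q - x\|_1$ exceeds $\alpha t$, Proposition~\ref{prop:GM}(ii) together with translation invariance controls the first two pieces. On the complement, any candidate $([0]_q,[x]_q) = (y,z)$ satisfies $\|y - z\|_1 \le \|x\|_1 + 2\alpha t \le t/\Cr{AP}$, exactly the range in which Proposition~\ref{prop:GM}(i) gives $\P(t \le d(y,z) < \infty) \le \Cr{GM1}e^{-\Cr{GM2}t}$; summing over $y,z$ in the relevant boxes yields a polynomial-times-exponential estimate. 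Integrating $\gamma t^{\gamma-1}\P(d > t)$ then produces the desired $\|x\|_1^\gamma$ bound, the case $x=0$ being trivial since $[0]_q = [x]_q$ there.

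For the inequality on $\alpha_\lambda^p$, the coordinate symmetries of the Lyapunov exponent together with \eqref{eq:lyap_bd} reduce the problem to showing $\alpha_\lambda^p(\xi_1) \le \Cr{ible}^{(q)}(\lambda + \log(2d))$. I evaluate $a_\lambda(0, \cdot, \omega_p)$ along the sequence $y_k := [k\xi_1]_q \in \mathcal{C}_\infty(\omega_q) \subseteq \mathcal{C}_\infty(\omega_p)$; by Proposition~\ref{prop:GM}(ii) and Borel--Cantelli, $\|y_k - k\xi_1\|_1 = O(\log k)$ a.s., so $y_k/\|y_k\|_1 \to \xi_1$ and $\|y_k\|_1/k \to 1$. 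Hence \eqref{eq:lyap_def} yields $a_\lambda(0, y_k, \omega_p)/k \to \alpha_\lambda^p(\xi_1)$ a.s.\ on the positive-probability event $\{0 \in \mathcal{C}_\infty(\omega_p)\}$. On this event, combining \eqref{eq:triangle}, \eqref{eq:chemi}, and the monotonicity $d_{\omega_p} \le d_{\omega_q}$ (any $\omega_q$-open path is $\omega_p$-open) gives $a_\lambda(0, y_k, \omega_p) \le a_\lambda(0, [0]_q, \omega_p) + (\lambda + \log(2d))\,d_{\omega_q}([0]_q, [k\xi_1]_q)$, with the first summand a.s.\ finite and constant in $k$, hence negligible after division by $k$.

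The final step is Kingman's subadditive ergodic theorem applied to the sequence $d_{\omega_q}([0]_q, [k\xi_1]_q)$: the triangle inequality for the chemical distance and the translation covariance of $[\cdot]_q$ under shifts by $\xi_1$ give stationary subadditivity, the i.i.d.\ structure of $\omega_q$ supplies ergodicity, and the first part of the lemma (with $\gamma = 1$) provides integrability with $\E[d_{\omega_q}([0]_q,[k\xi_1]_q)] \le \Cr{ible}^{(q)} k$. Therefore $d_{\omega_q}([0]_q,[k\xi_1]_q)/k$ converges a.s.\ to a deterministic constant $L \le \Cr{ible}^{(q)}$, and combining this with the previous paragraph yields $\alpha_\lambda^p(\xi_1) \le \Cr{ible}^{(q)}(\lambda + \log(2d))$; \eqref{eq:lyap_bd} then extends the bound to all $x \in \R^d$. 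The principal subtlety is this last passage from an almost-sure inequality on the event $\{0 \in \mathcal{C}_\infty(\omega_p)\}$ to a deterministic estimate on $\alpha_\lambda^p(\xi_1)$: a direct Fatou-type argument would introduce an unwanted factor $1/\P_p(0 \in \mathcal{C}_\infty)$, so invoking Kingman's theorem to force the relevant liminf to be an almost-sure constant is essential.
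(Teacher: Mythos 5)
Your proof of the first assertion is essentially the paper's: both decompose $\{d_{\omega_q}([0]_q,[x]_q)\ge t\}$ according to whether the ``holes'' $[0]_q$, $[x]_q$ are too far from $0$, $x$ (controlled by Proposition~\ref{prop:GM}(ii)) and then, on the complement, sum Proposition~\ref{prop:GM}(i) over the finitely many candidate pairs; the only difference is the choice of cut-off radius ($t/(4\AP)$ in your version, $t/(3\AP)$ in the paper's), which is immaterial. Your proof of \eqref{eq:a_ible}, however, takes a genuinely different route. The paper simply invokes the $k=1$ case of Proposition~\ref{prop:modify}, namely $\alpha_\lambda^p(x)\le \E[a_\lambda^q(0,x,\omega_p)]\le(\lambda+\log(2d))\E[d_{\omega_q}([0]_q,[x]_q)]$ — note that in the paper's logical order this is not circular, since the first assertion of the lemma supplies the integrability needed for Proposition~\ref{prop:modify}, which is then fed back to deduce \eqref{eq:a_ible}. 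You instead bypass Proposition~\ref{prop:modify} altogether: you evaluate the shape theorem \eqref{eq:lyap_def} along the hole sequence $y_k=[k\xi_1]_q$, bound $a_\lambda(0,y_k,\omega_p)$ by the chemical distance $d_{\omega_q}([0]_q,[k\xi_1]_q)$ via \eqref{eq:triangle}, \eqref{eq:chemi} and monotonicity in $p$, and run Kingman's theorem directly on that chemical-distance sequence to extract an almost-sure deterministic limit $L\le \Cr{ible}^{(q)}$. Your closing remark is well taken: the point of Kingman here is exactly to avoid the stray factor $1/\P_p(0\in\mathcal{C}_\infty)$ that a naive Fatou argument on the positive-probability event would leave behind. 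Both approaches are correct; the paper's is shorter because it piggybacks on a subadditive-ergodic argument it must carry out anyway in Proposition~\ref{prop:modify}, while yours is self-contained modulo the already-stated Proposition~\ref{prop:K_lyap} and does not rely on a result proved later in the text.
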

\begin{proof}
Set $c:=3\Cr{AP}$ and use the translation invariance of $\omega_q$ to obtain that for all $t \geq 0$,
\begin{align}\label{eq:a_tail}
\begin{split}
	\P(d_{\omega_q}([0]_q,[x]_q) \geq t)
	&\leq 2\P(\mathcal{C}_\infty(\omega_q) \cap B_1(0,t/c)=\emptyset)\\
	&\quad +\sum_{\substack{y \in B_1(0,t/c)\\ z \in B_1(x,t/c)}} \P(t \leq d_{\omega_q}(y,z)<\infty).
\end{split}
\end{align}
Proposition~\ref{prop:GM}-(ii) yields that the first term of the right side in \eqref{eq:a_tail} is not larger than $2\Cr{GM1}e^{-\Cr{GM2}t/c}$.
On the other hand, since $\Cr{AP}\|y-z\|_1 \leq t$ for all $t \geq c\|x\|_1$, $y \in B_1(0,t/c)$ and $z \in B_1(x,t/c)$, Proposition~\ref{prop:GM}-(i) proves that there exists a constant $c'$ such that for all $t \geq c\|x\|_1$, the second term of the right side in \eqref{eq:a_tail} is smaller than or equal to $c't^{2d} e^{-\Cr{GM2}t}$.
With these observations,
\begin{align*}
	\E[d_{\omega_q}([0]_q,[x]_q)^\gamma]
	&= \int_0^\infty \P\bigl( d_{\omega_q}([0]_q,[x]_q) \geq t^{1/\gamma} \bigr)\,dt\\
	&\leq (c\|x\|_1)^\gamma+\int_{(c\|x\|_1)^\gamma}^\infty \P\bigl( d_{\omega_q}([0]_q,[x]_q) \geq t^{1/\gamma} \bigr)\,dt\\
	&\leq (c\|x\|_1)^\gamma+\int_0^\infty \bigl( 2\Cr{GM1}e^{-\Cr{GM2}t/c}+c't^{2d} e^{-\Cr{GM2}t} \bigr)\,dt.
\end{align*}
The last integral is bounded uniformly in $\|x\|_1$, and the first assertion follows.

Next observe \eqref{eq:a_ible}.
Since $\alpha_\lambda^p(\cdot)$ is a norm on $\R^d$, it suffices to check \eqref{eq:a_ible} for $x \in \Z^d$.
We use \eqref{eq:chemi} and Proposition~\ref{prop:modify} to obtain
\begin{align*}
	\alpha_\lambda^p(x) \leq \E[a_\lambda^q(0,x,\omega_p)]
	\leq (\lambda+\log(2d))\E[d_{\omega_p}([0]_q,[x]_q)].
\end{align*}
Therefore, \eqref{eq:a_ible} follows from the first assertion.
\end{proof}

\begin{lem}\label{lem:maximal}
Let $p_c<q \leq 1$.
Then, there exist constants $\Cl{maximal1}=\Cr{maximal1}^{(q)}$ and $\Cl{maximal2}=\Cr{maximal2}^{(q)}$ such that for all $\epsilon>0$ and $x \in \Z^d$,
\begin{align}\label{eq:maximal}
\begin{split}
	&\P\Bigl( \sup\{ d_{\omega_q}([x]_q,[y]_q):y \in \Z^d,\,\|x-y\|_1<\epsilon\|x\|_1 \} \geq 3\Cr{AP}^{(q)}\epsilon\|x\|_1 \Bigr)\\
	&\leq \Cr{maximal1}e^{-\Cr{maximal2}\epsilon\|x\|_1}.
\end{split}
\end{align}
In particular, for any $\epsilon>0$, with probability one,
\begin{align*}
	\sup\{ d_{\omega_q}([x]_q,[y]_q):y \in \Z^d,\,\|x-y\|_1 \leq \epsilon\|x\|_1 \}<3\Cr{AP}^{(q)}\epsilon\|x\|_1,
\end{align*}
provided $\|x\|_1$ is large enough.
\end{lem}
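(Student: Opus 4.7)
The plan is to prove \eqref{eq:maximal} by a union bound over integer points of $B_1(x,\epsilon\|x\|_1)$ combined with the chemical--distance tail estimates of Proposition~\ref{prop:GM}, in direct parallel to the decomposition~\eqref{eq:a_tail} that appeared in the proof of Lemma~\ref{lem:ible}. The ``in particular'' clause will then follow from a standard Borel--Cantelli argument.

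Set $T:=\epsilon\|x\|_1$ and $t:=3\Cr{AP}\,T$, and first fix one $y \in \Z^d$ with $\|y-x\|_1<T$. Mimicking~\eqref{eq:a_tail}, I would split the event $\{d_{\omega_q}([x]_q,[y]_q) \geq t\}$ into the possibility that $\mathcal{C}_\infty(\omega_q)$ misses one of the $\ell^1$-balls $B_1(x,T)$ or $B_1(y,T)$, and the possibility that both $[x]_q \in B_1(x,T)$ and $[y]_q \in B_1(y,T)$ but nonetheless $d_{\omega_q}([x]_q,[y]_q) \geq t$. By Proposition~\ref{prop:GM}(ii) the first possibility has probability at most $2\Cr{GM1}e^{-\Cr{GM2}T}$. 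For the second, any candidate pair $(x',y') \in (B_1(x,T)\cap\Z^d)\times(B_1(y,T)\cap\Z^d)$ satisfies $\|x'-y'\|_1 \leq 3T = t/\Cr{AP}$, so Proposition~\ref{prop:GM}(i) applies and yields $\P(t \leq d_{\omega_q}(x',y')<\infty) \leq \Cr{GM1}e^{-\Cr{GM2}t}$; summing over the at most $CT^{2d}$ such pairs gives a pointwise bound of order $(1+T^{2d})e^{-\Cr{GM2}T}$.

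Next I would take a union bound of this pointwise estimate over the at most $(2T+1)^d$ integer points $y$ in $B_1(x,T)$, producing a total bound of order $T^{3d}e^{-\Cr{GM2}T}$. A mild reduction of the exponential rate (to absorb the polynomial factor $T^{3d}$) together with a corresponding enlargement of the prefactor (to cover the trivial range $T<1$, where only $y=x$ contributes and the supremum equals $0$) yields~\eqref{eq:maximal} with suitable constants $\Cr{maximal1},\Cr{maximal2}$. The ``in particular'' assertion is then immediate from Borel--Cantelli: because $\#\{x \in \Z^d:\|x\|_1=R\}$ grows only polynomially in $R$, the series $\sum_{x \in \Z^d}\Cr{maximal1}e^{-\Cr{maximal2}\epsilon\|x\|_1}$ converges, so almost surely the event in~\eqref{eq:maximal} occurs for only finitely many $x$.

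No genuinely new analytic difficulty appears beyond that handled in Lemma~\ref{lem:ible}; the one point worth watching is that the factor $3$ multiplying $\Cr{AP}\epsilon\|x\|_1$ in the statement is exactly what is needed to guarantee $t \geq \Cr{AP}\|x'-y'\|_1$ uniformly over all pairs $(x',y')$ entering the decomposition, so that Proposition~\ref{prop:GM}(i) is legitimately applicable. The only new ingredient relative to Lemma~\ref{lem:ible} is the union bound, which promotes the pointwise tail estimate to the desired maximal estimate.
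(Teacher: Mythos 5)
Your argument is correct and is essentially the same as the paper's: for each fixed $y$ with $\|x-y\|_1\leq\epsilon\|x\|_1$ one splits according to whether $\mathcal{C}_\infty(\omega_q)$ meets the two $\ell^1$-balls of radius $\epsilon\|x\|_1$ centered at $x$ and $y$, applies Proposition~\ref{prop:GM}(ii) to the missing-ball event and Proposition~\ref{prop:GM}(i) (legitimate precisely because $\|z-w\|_1\leq 3\epsilon\|x\|_1$) to the candidate pairs $(z,w)=([x]_q,[y]_q)$, and absorbs the polynomial prefactor into the exponential, with Borel--Cantelli giving the ``in particular'' clause. The only difference is presentational: the paper records only the single-$y$ estimate and declares \eqref{eq:maximal} ``immediately follows,'' leaving the final union bound over the $O((\epsilon\|x\|_1)^d)$ admissible $y$'s implicit, whereas you carry that union bound out explicitly.
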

\begin{proof}
Let $x,y \in \Z^d$ with $\|x-y\|_1 \leq \epsilon\|x\|_1$.
The union bound proves that
\begin{align}\label{eq:d_max}
\begin{split}
	&\P(d_{\omega_q}([x]_q,[y]_q) \geq 3\Cr{AP}\epsilon\|x\|_1)\\
	&\leq 2\P(\mathcal{C}_\infty(\omega_q) \cap B_1(0,\epsilon\|x\|_1)=\emptyset)\\
	&\quad +\sum_{\substack{z \in B_1(x,\epsilon\|x\|_1)\\ w \in B_1(y,\epsilon\|x\|_1)}}
		\P(3\Cr{AP}\epsilon\|x\|_1 \leq d_{\omega_q}(z,w)<\infty).
\end{split}
\end{align}
Since $\Cr{AP}\|z-w\|_1 \leq 3\Cr{AP}\epsilon\|x\|_1$ for all $z \in B_1(x,\epsilon\|x\|_1)$ and $w \in B_1(y,\epsilon\|x\|_1)$, Proposition~\ref{prop:GM} implies that there exists a constant $c$ such that the right side in \eqref{eq:d_max} is bounded from above by
\begin{align*}
	2\Cr{GM1}e^{-\Cr{GM2}\epsilon\|x\|_1}+c(\epsilon\|x\|_1)^{2d}e^{-3\Cr{AP}\Cr{GM2}\epsilon\|x\|_1}.
\end{align*}
Hence, \eqref{eq:maximal} immediately follows.
The second assertion is a direct consequence of the Borel--Cantelli lemma.
\end{proof}

We are now in a position to prove Proposition~\ref{prop:modify}.

\begin{proof}[\bf Proof of Proposition~\ref{prop:modify}]
Proposition~\ref{prop:K_lyap} and the fact that $\mathcal{C}_\infty(\omega_q) \subset \mathcal{C}_\infty(\omega_p)$ imply that on the event $\{ 0 \in \mathcal{C}_\infty(\omega_q) \}$ of positive probability,
\begin{align*}
	\alpha_\lambda^p(x)
	= \lim_{\substack{k \to \infty\\ kx \in \mathcal{C}_\infty(\omega_q)}} \frac{1}{k} a_\lambda(0,kx,\omega_p)
	= \lim_{\substack{k \to \infty\\ kx \in \mathcal{C}_\infty(\omega_q)}} \frac{1}{k} a_\lambda^q(0,kx,\omega_p).
\end{align*}
Thanks to \eqref{eq:chemi} and Lemma~\ref{lem:ible}, $a_\lambda^q(0,kx,\omega_p)$ is integrable and the first assertion follows from the subadditive ergodic theorem for the process $a_\lambda^q(ix,jx,\omega_p)$, $0 \leq i<j,\,i,j \in \N_0$.

For the second assertion, it suffices to show that for any $0<\epsilon \in \Q$, the following holds almost surely:
There exists $N \in \N$ such that for all $x \in \Z^d$ with $\|x\|_1 \geq N$,
\begin{align*}
	|a_\lambda^q(0,x,\omega_p)-\alpha_\lambda^p(x)| \leq \epsilon\|x\|_1.
\end{align*}
To do this, assume that the above statement is false.
Then, there exists $\epsilon_0>0$ such that with positive probability, we can take a sequence $(x_i)_{i=1}^\infty$ of $\Z^d$ satisfying that $\|x_i\|_1 \to \infty$ as $i \to \infty$ and 
\begin{align*}
	|a_\lambda^q(0,x_i,\omega_p)-\alpha_\lambda^p(x_i)|>\epsilon_0\|x_i\|_1,\qquad i \geq 1.
\end{align*}
Without loss of generality, we can assume $x_i/\|x_i\|_1 \to v$ as $i \to \infty$ for some $v \in \S^{d-1}$.
Let $\eta$ be a positive number to be chosen later.
Take $v' \in \Q^d \cap \S^{d-1}$ and $M \in \N$ with $\|v-v'\|_1<\eta$ and  $Mv' \in \Z^d$.
Furthermore, define for $i \geq 1$,
\begin{align*}
	x'_i:=\biggl\lfloor \frac{\|x_i\|_1}{M} \biggr\rfloor Mv'.
\end{align*}
Then, for all large $i$,
\begin{align*}
	\|x_i-x'_i\|_1<\eta\|x_i\|_1+M \leq 2\eta\|x_i\|_1.
\end{align*}
Hence, due to \eqref{eq:triangle} and \eqref{eq:lyap_bd},
\begin{align*}
	\epsilon_0\|x_i\|_1
	&<|a_\lambda^q(0,x_i,\omega_p)-\alpha_\lambda^p(x_i)|\\
	&\leq (\lambda+\log(2d))d_{\omega_q}([x_i]_q,[x'_i]_q)
		+|a_\lambda^q(0,x'_i,\omega_p)-\alpha_\lambda^p(x'_i)|\\
	&\quad +\alpha_\lambda(\xi_1)\|x_i-x'_i\|_1.
\end{align*}
It follows from Lemma~\ref{lem:maximal} and the first assertion that there exists a constant $c$ such that the most right side is not larger than $c\eta\|x_i\|_1$ for all large $i$.
Taking $\eta \leq \epsilon_0/c$, we derives a contradiction and complete the proof.
\end{proof}

\section{Continuity for the Lyapunov exponent}\label{sect:lyap_conti}
The aim of this section is to show Theorem~\ref{thm:lyap_conti}.
To do this, we use the following theorem, which states the upper and the lower semi-continuities of the Lyapunov exponent $\alpha_\lambda^p(x)$ in $p$.

\begin{thm}\label{thm:lyap_ul}
Let $\lambda>0$ and $x \in \Z^d \setminus \{0\}$.
If $p$ and $p_n$, $n \geq 1$, belong to $(p_c,1]$ and $p_n \to p$ as $n \to \infty$, then
\begin{align}\label{eq:lyap_u}
	\limsup_{n \to \infty} \alpha_\lambda^{p_n}(x) \leq \alpha_\lambda^p(x)
\end{align}
and
\begin{align}\label{eq:lyap_l}
	\liminf_{n \to \infty} \alpha_\lambda^{p_n}(x) \geq \alpha_\lambda^p(x).
\end{align}
\end{thm}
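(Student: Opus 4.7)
The plan is to exploit the monotone coupling $(\omega_p)_{p \in [0,1]}$ from Subsection~\ref{subsect:coupling} together with the modified travel cost of Subsection~\ref{subsect:modigy}, and to treat the two semi-continuities separately. Once and for all I fix some $q \in (p_c, p)$, so that $q \leq p_n$ for every sufficiently large $n$ and the quantities $a_\lambda^q(\cdot,\cdot,\omega_{p_n})$ are all well defined.

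For the upper bound \eqref{eq:lyap_u}, Proposition~\ref{prop:modify} furnishes the infimum representation
\begin{align*}
\alpha_\lambda^{p_n}(x) \leq \frac{1}{k}\,\E[a_\lambda^q(0,kx,\omega_{p_n})]
\end{align*}
for every $k \geq 1$. The endpoints $[0]_q$ and $[kx]_q$ depend only on $\omega_q$, and $\omega_{p_n}$ agrees with $\omega_p$ on any fixed finite edge set once $n$ is large; truncating the hitting time at a horizon $T$ (the tail beyond $T$ being controlled by $e^{-\lambda T}$) therefore yields the almost sure convergence $a_\lambda^q(0,kx,\omega_{p_n}) \to a_\lambda^q(0,kx,\omega_p)$. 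The monotonicity of the coupling combined with \eqref{eq:chemi} supplies the integrable majorant $(\lambda+\log(2d))\,d_{\omega_q}([0]_q,[kx]_q)$ provided by Lemma~\ref{lem:ible}, so dominated convergence and another appeal to Proposition~\ref{prop:modify} give $\limsup_n \alpha_\lambda^{p_n}(x) \leq k^{-1}\E[a_\lambda^q(0,kx,\omega_p)]$, and minimizing over $k$ yields $\alpha_\lambda^p(x)$.

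For the lower bound \eqref{eq:lyap_l} I would argue by contradiction: if $\alpha_\lambda^{p_{n_j}}(x) \leq \alpha_\lambda^p(x) - 3\epsilon$ along some subsequence, then Proposition~\ref{prop:modify} forces $a_\lambda^q(0,Nx,\omega_{p_{n_j}}) \leq (\alpha_\lambda^p(x)-2\epsilon)N$ with non-negligible probability for large $N$. My goal is to transfer such a cheap cost from the $\omega_{p_{n_j}}$-world to the $\omega_p$-world at an extra charge of at most $\epsilon N$, thereby contradicting the forthcoming lower large deviation (Proposition~\ref{prop:lld}) which asserts that $\P(a_\lambda(0,Nx,\omega_p) \leq (\alpha_\lambda^p(x)-\epsilon)N)$ decays exponentially in $N$. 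The transfer is carried out by a block renormalization: partition $\Z^d$ into boxes of a large fixed scale $L$ and declare a box \emph{good} when $\omega_{p_{n_j}} = \omega_p$ on every edge in a slightly enlarged copy of the box and the usual percolation regularity (densely sampled infinite cluster together with chemical/Euclidean comparability, cf.\ Proposition~\ref{prop:GM} and Lemma~\ref{lem:maximal}) holds inside. The probability of a fixed box being good tends to $1$ as $n_j \to \infty$, the good-box indicators form a finitely dependent field, and Proposition~\ref{prop:domi} then dominates them by a supercritical site percolation; on a scale-$N$ path the extra chemical detour needed to avoid the sparse bad boxes is $O(\epsilon N)$, so by \eqref{eq:chemi} the additional travel cost is at most $\epsilon N$.

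The main obstacle will be executing this ``transfer'' step rigorously. In first passage percolation one reroutes a distinguished geodesic around bad regions, whereas here $a_\lambda$ is defined as $-\log$ of an expectation over \emph{all} random walk trajectories and no canonical path exists to be modified. I therefore expect the technical heart of the lower semi-continuity to be a random-walk surgery lemma showing that, on the typical good-box pattern, the contribution to $e_\lambda(0,Nx,\omega_p)$ coming from trajectories that spend more than $\epsilon N$ steps inside bad boxes is exponentially small, so that $e_\lambda(0,Nx,\omega_p)$ and $e_\lambda(0,Nx,\omega_{p_{n_j}})$ differ by at most a factor $e^{\epsilon N}$. The same averaged-path issue will complicate the proof of Proposition~\ref{prop:lld} itself, in which Ahlberg's FPP strategy must be adapted to a quantity averaged over trajectories rather than attained along a single geodesic, and I expect this averaging difficulty to absorb most of the technical work in the section.
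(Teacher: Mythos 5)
Your treatment of the upper semi-continuity is essentially the paper's argument. You pick $q\in(p_c,1)$ below all $p_n\wedge p$, use the infimum representation of Proposition~\ref{prop:modify}, observe that the truncated cost depends on finitely many edges (on which $\omega_{p_n}=\omega_p$ eventually a.s.), dominate by $(\lambda+\log(2d))\,d_{\omega_q}([0]_q,[kx]_q)$ via the monotone coupling and \eqref{eq:chemi}, and apply Lemma~\ref{lem:ible} and dominated convergence. The paper packages the truncation differently (a fixed horizon $\rho\|kx\|_1$ encoded in $\tilde a^q_\lambda$, and Lemma~\ref{lem:a_unf} to control the gap to $a^q_\lambda$ uniformly in $p'>q$), but the mechanism is the same, and your version is sound.

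For the lower semi-continuity your route diverges from the paper's, and there is a genuine gap. You propose to obtain a lower large deviation bound for $a_\lambda(0,Nx,\omega_p)$ \emph{only in the fixed $\omega_p$ world} and then ``transfer'' a cheap cost realized by $\omega_{p_{n_j}}$ back to $\omega_p$ via a block-renormalization surgery, acknowledging that the required random-walk surgery lemma is unproved. The difficulty is real and, as stated, unresolved: a trajectory contributing to $e_\lambda(0,Nx,\omega_{p_{n_j}})$ may use an edge closed in $\omega_p$ (or be forbidden by $\omega_{p_{n_j}}$ when it is allowed by $\omega_p$), so the two Green-function-type sums live over different path spaces and a path-by-path rerouting must control both the change of path length (which alters $e^{-\lambda|\gamma|}$) and the multiplicity of the rerouting map; none of this is carried out, and it is not clear it can be made to give a clean $e^{\epsilon N}$ factor. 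The paper avoids the issue entirely: Proposition~\ref{prop:lld} is stated and proved for $a_\lambda^{p_n}(0,z,\omega_{p_n})$ with constants uniform in large $n$, so that the lower semi-continuity becomes a one-line Borel--Cantelli argument as in the proof of \eqref{eq:lyap_l}. The ``averaged trajectory'' problem you flag is handled there not by rerouting but by a stopping-time decomposition: one tracks successive exits from renormalized boxes, uses the box-exit cost $c(v,\ell,\omega)$ (an infimum over entry points, hence applicable to every trajectory at once, with no need to single out a geodesic), and applies the strong Markov property to multiply the per-box contributions. The coupling enters only through Lemma~\ref{lem:c_n}, which shows the good-box probability for $\omega_{p_n}$ converges to that for $\omega_p$, so that the stochastic-domination parameter $\kappa$ in Proposition~\ref{prop:domi} can be chosen uniformly in $n\ge N$. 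In short: the missing idea in your sketch is to prove the large deviation estimate directly in the $\omega_{p_n}$ environment with $n$-uniform constants and to replace geodesic surgery by exit-time bookkeeping; your proposed surgery lemma is the bottleneck and is not needed.
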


We first complete the proof of Theorem~\ref{thm:lyap_conti}, and postpone those of \eqref{eq:lyap_u} and \eqref{eq:lyap_l} until Subsections~\ref{subsect:lyap_u} and \ref{subsect:lyap_l}, respectively.

\begin{proof}[\bf Proof of Theorem~\ref{thm:lyap_conti}]
For $x=0$, \eqref{eq:lyap_conti} is trivial because of $\alpha_\lambda^{p_n}(0)=\alpha_\lambda^p(0)=0$.
Therefore, in the case $x \in \Z^d$, \eqref{eq:lyap_conti} is a direct consequence of Theorem~\ref{thm:lyap_ul}.
For any $x \in \Q^d$, there exists an $M \in \N$ such that $Mx \in \Z^d$, and we have
\begin{align*}
	\lim_{n \to \infty} \alpha_\lambda^{p_n}(x)=\frac{1}{M}\lim_{n \to \infty} \alpha_\lambda^{p_n}(Mx)
	=\frac{1}{M}\alpha_\lambda^p(Mx)=\alpha_\lambda^p(x).
\end{align*}
This means that \eqref{eq:lyap_conti} holds for $x \in \Q^d$.
Let us finally extend it to the case $x \in \R^d$.
Without loss of generality, we may assume that there exists $q \in (p_c,1)$ such that $q<p_n \wedge p$ for all $n \geq 1$.
Let $(x_i)_{i=1}^\infty$ be a sequence of $\Q^d$ with $x_i \to x$ as $i \to \infty$.
Lemma~\ref{lem:ible} tells us that for all $p'>q$,
\begin{align*}
	|\alpha_\lambda^{p'}(x_i)-\alpha_\lambda^{p'}(x)|
	\leq \alpha_\lambda^{p'}(x_i-x)
	\leq \Cr{ible}^{(q)}(\lambda+\log(2d)) \|x_i-x\|_1.
\end{align*}
Therefore,
\begin{align*}
	\lim_{i \to \infty} \sup_{p'>q}|\alpha_\lambda^{p'}(x_i)-\alpha_\lambda^{p'}(x)|=0.
\end{align*}
Note that
\begin{align*}
	&\limsup_{n \to \infty}|\alpha_\lambda^{p_n}(x)-\alpha_\lambda^p(x)|\\
	&\leq 2\sup_{p'>q} |\alpha_\lambda^{p'}(x)-\alpha_\lambda^{p'}(x_i)|
		+\limsup_{n \to \infty} |\alpha_\lambda^{p_n}(x_i)-\alpha_\lambda^p(x_i)|\\
	&= 2\sup_{p'>q} |\alpha_\lambda^{p'}(x)-\alpha_\lambda^{p'}(x_i)|,
\end{align*}
and letting $i \to \infty$ proves \eqref{eq:lyap_conti} for $x \in \R^d$.
\end{proof}

\subsection{Upper semi-continuity for the Lyapunov exponent}\label{subsect:lyap_u}
In this subsection, we prove \eqref{eq:lyap_u} of Theorem~\ref{thm:lyap_ul}.
To this end, let us prepare some notation and lemma.
Let $p_c<q \leq p \leq 1$ and $\lambda>0$, and set $\rho:=4\Cr{AP}^{(q)}(\lambda+\log(2d))/\lambda$.
We define for $x \in \Z^d$,
\begin{align*}
	&\tilde{a}_\lambda^q(0,x,\omega_p)\\
	&:= -\log E_{\omega_p}^{[0]_q}\bigl[ e^{-\lambda H([x]_q)}\1{\{ H([x]_q) \leq \rho\|x\|_1 \}} \bigr]
		\wedge (\lambda+\log(2d))d_{\omega_q}([0]_q,[x]_q).
\end{align*}

The following lemma says that the expectations of $a_\lambda^q(0,kx,\omega_p)$ and $\tilde{a}_\lambda^q(0,kx,\omega_p)$ are comparable uniformly in $p>q$.

\begin{lem}\label{lem:a_unf}
For $x \in \Z^d \setminus \{0\}$,
\begin{align}\label{eq:a_unf}
	\lim_{k \to \infty} \frac{1}{k}\sup_{p'>q}
	\E\bigl[ |a_\lambda^q(0,kx,\omega_{p'})-\tilde{a}_\lambda^q(0,kx,\omega_{p'})| \bigr]=0.
\end{align}
\end{lem}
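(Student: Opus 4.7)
\textbf{Proof plan for Lemma~\ref{lem:a_unf}.} The plan is first to observe that $\tilde{a}_\lambda^q(0,kx,\omega_{p'}) \ge a_\lambda^q(0,kx,\omega_{p'})$, so the absolute value equals $\tilde{a}-a$, and then to split the expectation according to the $p'$-free event $\mathcal{A}_k := \{d_{\omega_q}([0]_q,[kx]_q) > 3\Cr{AP}^{(q)}k\|x\|_1\}$. The inequality $\tilde{a} \ge a$ comes from two one-sided bounds: writing $e_\lambda^{\mathrm{tr}} := E_{\omega_{p'}}^{[0]_q}[e^{-\lambda H([kx]_q)}\mathbf{1}_{\{H([kx]_q) \le \rho k\|x\|_1\}}]$, we have $e_\lambda^{\mathrm{tr}} \le e_\lambda([0]_q,[kx]_q,\omega_{p'})$ trivially, while by \eqref{eq:chemi} together with the coupling monotonicity $d_{\omega_{p'}} \le d_{\omega_q}$ (since $\omega_{p'} \ge \omega_q$ edgewise), also $(\lambda+\log(2d))d_{\omega_q}([0]_q,[kx]_q) \ge a_\lambda^q$.

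On $\mathcal{A}_k^c$, I would follow a shortest $\omega_q$-open path from $[0]_q$ to $[kx]_q$ (its edges being a fortiori $\omega_{p'}$-open) to get $e_\lambda([0]_q,[kx]_q,\omega_{p'}) \ge (2d)^{-d_{\omega_q}([0]_q,[kx]_q)}e^{-\lambda d_{\omega_q}([0]_q,[kx]_q)} \ge \exp(-\tfrac{3}{4}\lambda\rho k\|x\|_1)$, whereas the truncation loss is crudely bounded by $e^{-\lambda\rho k\|x\|_1}$. The choice $\rho = 4\Cr{AP}^{(q)}(\lambda+\log(2d))/\lambda$ is made exactly so that the ratio of these two bounds decays like $e^{-\lambda\rho k\|x\|_1/4}$, hence lies below $1/2$ for all $k$ large, uniformly in $p'>q$. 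This yields $-\log e_\lambda^{\mathrm{tr}} \le a_\lambda^q + \log 2$. A short case split on which of the two arguments realizes the minimum defining $\tilde{a}_\lambda^q$, combined with the bound $(\lambda+\log(2d))d_{\omega_q}([0]_q,[kx]_q) \ge a_\lambda^q$ already noted, then gives $\tilde{a}_\lambda^q - a_\lambda^q \le \log 2$ on $\mathcal{A}_k^c$ for all large $k$, uniformly in $p'>q$.

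On $\mathcal{A}_k$ I would use the crude bound $\tilde{a}_\lambda^q - a_\lambda^q \le \tilde{a}_\lambda^q \le (\lambda+\log(2d))d_{\omega_q}([0]_q,[kx]_q)$ and apply Cauchy--Schwarz to reduce to $\E[d_{\omega_q}([0]_q,[kx]_q)^2]^{1/2}\,\P(\mathcal{A}_k)^{1/2}$. Lemma~\ref{lem:ible} with $\gamma=2$ bounds the former by $Ck\|x\|_1$, and the argument behind Lemmas~\ref{lem:ible} and \ref{lem:maximal} (applying Proposition~\ref{prop:GM}-(i),(ii) at threshold $3\Cr{AP}^{(q)}k\|x\|_1$) bounds $\P(\mathcal{A}_k)$ by $C'e^{-ck\|x\|_1}$. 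Since both of these estimates depend only on $q$, $\E[|\tilde{a}-a|\mathbf{1}_{\mathcal{A}_k}]$ is exponentially small in $k$ uniformly in $p'>q$, hence $o(k)$. Combining the two regimes and dividing by $k$ delivers \eqref{eq:a_unf}.

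The main obstacle is keeping every constant uniform in $p'>q$. This works precisely because $\rho$ and the threshold defining $\mathcal{A}_k$ are expressed via $\Cr{AP}^{(q)}$, the random quantities being tail-controlled live on $\omega_q$ (which the coupling fixes once $q$ is fixed), and $p'$ enters only through the walk $P_{\omega_{p'}}^{[0]_q}$, where the path-following lower bound on $e_\lambda$ and the trivial bound $e_\lambda^{\mathrm{tr}} \le 1$ are themselves monotone in $p'$. The slightly delicate point is the case analysis around the $\min$ defining $\tilde{a}_\lambda^q$ on the good event, which however collapses once one uses that both arguments of this $\min$ dominate $a_\lambda^q$.
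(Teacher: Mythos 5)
Your proof is correct and follows essentially the same route as the paper: the same split on the event $\{d_{\omega_q}([0]_q,[kx]_q) \leq 3\Cr{AP}^{(q)}k\|x\|_1\}$, the same Cauchy--Schwarz argument with Lemmas~\ref{lem:ible} and \ref{lem:maximal} on its complement, and the same $\log 2$ bound on the good event coming from the choice of $\rho$. The only (cosmetic) difference is in the middle step: you bound the truncation loss directly as a fraction of $e_\lambda$, whereas the paper reaches the same $\tilde{a}_\lambda^q - a_\lambda^q \leq \log 2$ via a short contradiction showing $\tilde{a}_\lambda^q \leq \lambda\rho\|kx\|_1$ on the good event.
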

\begin{proof}
We first show that for each $y \in \Z^d \setminus \{0\}$, there exists an event $\Gamma_y(q)$ with $\P(\Gamma_y(q)^c) \leq \Cr{maximal1}^{(q)}e^{-\Cr{maximal2}^{(q)}\|y\|_1}$ such that
\begin{align}\label{eq:a_dif}
	|\tilde{a}_\lambda^q(0,y,\omega_p)-a_\lambda^q(0,y,\omega_p)| \leq \log 2.
\end{align}
Note that
\begin{align*}
	E_{\omega_p}^{[0]_q}\bigl[ e^{-\lambda H([y]_q)}\1{\{ \rho\|y\|_1<H([y]_q)<\infty \}} \bigr]
	\leq e^{-\lambda \rho\|y\|_1}.
\end{align*}
On the event $\Gamma_y(q):=\{ d_{\omega_q}([0]_q,[y]_q) \leq 3\Cr{AP}^{(q)}\|y\|_1 \}$,
\begin{align}\label{eq:a_divide}
	3\Cr{AP}^{(q)}(\lambda+\log(2d))\|y\|_1
	\geq a_\lambda^q(0,y,\omega_p)
	\geq -\log\bigl( e^{-\tilde{a}_\lambda^q(0,y,\omega_p)}+e^{-\lambda \rho\|y\|_1} \bigr).
\end{align}
If $\tilde{a}_\lambda^q(0,y,\omega_p)>\lambda \rho\|y\|_1$, then we have on $\Gamma_y(q)$,
\begin{align*}
	3\Cr{AP}^{(q)}(\lambda+\log(2d))\|y\|_1>-\log 2+\lambda \rho\|y\|_1,
\end{align*}
which yields
\begin{align*}
	\rho<\frac{3\Cr{AP}^{(q)}(\lambda+\log(2d))\|y\|_1+\log 2}{\lambda\|y\|_1}
	\leq \frac{4\Cr{AP}^{(q)}(\lambda+\log(2d))}{\lambda}.
\end{align*}
This contradicts the definition of $\rho$, and hence $\tilde{a}_\lambda^q(0,y,\omega_p) \leq \lambda \rho\|y\|_1$ must hold on $\Gamma_y(q)$.
Therefore, by \eqref{eq:a_divide}, one has on $\Gamma_y(q)$,
\begin{align*}
	a_\lambda^q(0,y,\omega_p) \geq -\log 2+\tilde{a}_\lambda^q(0,y,\omega_p).
\end{align*}
By definition, $\tilde{a}_\lambda^q(0,y,\omega_p)$ is always bigger than or equal to $a_\lambda^q(0,y,\omega_p)$, and \eqref{eq:a_dif} holds on $\Gamma_y(q)$.
We use Lemma~\ref{lem:maximal} to obtain the desired bound for $\P(\Gamma_y(q)^c)$, and the assertion follows.

Let us next prove \eqref{eq:a_unf}.
The above assertion says that for each $p'>q$,
\begin{align*}
	&\E\bigl[ |a_\lambda^q(0,kx,\omega_{p'})-\tilde{a}_\lambda^q(0,kx,\omega_{p'})| \bigr]\\
	&\leq \E\bigl[ (\tilde{a}_\lambda^q(0,kx,\omega_{p'})-a_\lambda^q(0,kx,\omega_{p'}))\1{\Gamma_{kx}(q)} \bigr]
		+\E\bigl[ \tilde{a}_\lambda^q(0,kx,\omega_p)\1{\Gamma_{kx}(q)^c} \bigr]\\
	&\leq \log 2+(\lambda+\log(2d))\E[d_{\omega_q}([0]_q,[kx]_q)\1{\Gamma_{kx}(q)^c}].
\end{align*}
Schwarz's inequality and Lemma~\ref{lem:ible} imply that for some constant $c$ (which is independent of $p'$),
\begin{align*}
	\E[d_{\omega_q}([0]_q,[kx]_q)\1{\Gamma_{kx}(q)^c}]
	&\leq \E[d_{\omega_q}([0]_q,[kx]_q)^2]^{1/2}\P(\Gamma_{kx}(q)^c)^{1/2}\\
	&\leq c\|kx\|_1 e^{-\Cr{maximal2}^{(q)}\|kx\|_1/2}.
\end{align*}
Therefore,
\begin{align*}
	&\limsup_{k \to \infty} \frac{1}{k}\sup_{p'>q} \E\bigl[ |a_\lambda^q(0,kx,\omega_{p'})-\tilde{a}_\lambda^q(0,kx,\omega_{p'})| \bigr]\\
	&\leq \limsup_{k \to \infty} \frac{1}{k} \bigl\{ \log 2
		+c(\lambda+\log(2d))\|kx\|_1 e^{-\Cr{maximal2}^{(q)}\|kx\|_1/2} \bigr\}=0,
\end{align*}
and the proof is complete.
\end{proof}

Now we are in a position to prove \eqref{eq:lyap_u} of Theorem~\ref{thm:lyap_ul}.

\begin{proof}[\bf Proof of (\ref{eq:lyap_u}) in Theorem~\ref{thm:lyap_ul}]
Pick $q \in (p_c,1)$ such that $q<p_n \wedge p$ for all large $n$.
Hence, if $n$ is large enough, then $\alpha_\lambda^{p_n}(x)-\alpha_\lambda^p(x)$ is bounded from above by
\begin{align*}
	\frac{1}{k}\bigl( \E[a_\lambda^q(0,kx,\omega_{p_n})]-\E[a_\lambda^q(0,kx,\omega_p)] \bigr)
	+\biggl( \frac{1}{k} \E[a_\lambda^q(0,kx,\omega_p)]-\alpha_\lambda^p(x) \biggr).
\end{align*}
It is clear from Proposition~\ref{prop:modify} that the second term in the above expression converges to zero as $k \to \infty$.
The task is now to prove
\begin{align}\label{eq:Leb}
	\lim_{k \to \infty}\lim_{n \to \infty}
	\frac{1}{k}\bigl( \E[a_\lambda^q(0,kx,\omega_{p_n})]-\E[a_\lambda^q(0,kx,\omega_p)] \bigr)=0.
\end{align}
To this end,
\begin{multline}\label{eq:upper_sup}
	\bigl| \E[a_\lambda^q(0,kx,\omega_{p_n})]-\E[a_\lambda^q(0,kx,\omega_p)] \bigr|\\
	\leq \bigl| \E[\tilde{a}_\lambda^q(0,kx,\omega_{p_n})]-\E[\tilde{a}_\lambda^q(0,kx,\omega_p)] \bigr|\\
	+2\sup_{p'>q}\E\bigl[ |a_\lambda^q(0,kx,\omega_{p'})-\tilde{a}_\lambda^q(0,kx,\omega_{p'})| \bigr].
\end{multline}
For the first term of the right side in \eqref{eq:upper_sup}, note that $\tilde{a}_\lambda^q(0,kx,\omega_{p_n})$ depends only on $\omega_q$ and $\omega_{p_n}(e)$'s on edges $e$ intersecting $B_1([0]_q,\rho\|kx\|_1)$.
This implies
\begin{align*}
	\lim_{n \to \infty} \tilde{a}_\lambda^q(0,kx,\omega_{p_n})=\tilde{a}_\lambda^q(0,kx,\omega_p).
\end{align*}
Moreover, $\tilde{a}_\lambda^q(0,kx,\omega_{p_n})$ is dominated by $(\lambda+\log(2d))d_{\omega_q}([0]_q,[kx]_q)$ uniformly in $n$, and Lebesgue's dominated convergence theorem gives that for each $k$,
\begin{align*}
	\lim_{n \to \infty} \bigl| \E[\tilde{a}_\lambda^q(0,kx,\omega_{p_n})]-\E[\tilde{a}_\lambda^q(0,kx,\omega_p)] \bigr|=0.
\end{align*}
On the other hand, from Lemma~\ref{lem:a_unf}, the last term of \eqref{eq:upper_sup} divided by $k$ converges to zero as $k \to \infty$, and \eqref{eq:Leb} follows.
\end{proof}

\subsection{Lower semi-continuity for the Lyapunov exponent}\label{subsect:lyap_l}
Our goal in this subsection is to show \eqref{eq:lyap_l} of Theorem~\ref{thm:lyap_ul}.
The following lower large deviation estimate plays the key role of the proof.

\begin{prop}\label{prop:lld}
Let $\lambda,\epsilon>0$.
Assume that $p$ and $p_n$, $n \geq 1$, belong to $(p_c,1]$ and $p_n \to p$ as $n \to \infty$.
Then, there exist constants $\Cl{lld1}$ and $\Cl{lld2}$ such that if $n$ is large enough, then for all large $t$ and $z \in \Z^d \setminus \{0\}$ with $\alpha_\lambda^p(z) \geq t$,
\begin{align*}
	\P\bigl( a_\lambda^{p_n}(0,z,\omega_{p_n})<t(1-2\epsilon) \bigr)
	\leq \Cr{lld1}e^{-\Cr{lld2}t}.
\end{align*}
\end{prop}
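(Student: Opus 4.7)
The strategy is a renormalisation argument following Ahlberg's treatment of first-passage percolation~\cite{Ahl15}, modified to handle the absence of an optimal trajectory realising the travel cost. Fix $q\in(p_c,1)$ with $q<p\wedge p_n$ for all $n\geq N_0$, so that $\mathcal{C}_\infty(\omega_q)\subset\mathcal{C}_\infty(\omega_{p_n})$ and, modulo boundary corrections controlled by Lemma~\ref{lem:maximal}, it suffices to estimate $a_\lambda^q(0,z,\omega_{p_n})$. In place of the FPP optimal-path argument I use a cut inequality from the strong Markov property: for a hyperplane $S$ separating $[0]_q$ from $[z]_q$ and its truncation $\tilde S:=S\cap B_1(0,C\|z\|_1)$ (the complement contributing negligibly by the chemical-distance tail in Proposition~\ref{prop:GM}(i)),
\[
a_\lambda^q(0,z,\omega_{p_n})\geq \min_{y\in\tilde S}\bigl[a_\lambda([0]_q,y,\omega_{p_n})+a_\lambda(y,[z]_q,\omega_{p_n})\bigr]-\log\#\tilde S
\]
up to an event of probability $\leq e^{-c\|z\|_1}$. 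Iterating with $m\asymp\|z\|_1/L$ equispaced hyperplanes perpendicular to $z$ at mesoscopic spacing $L$ yields a chain decomposition with a total entropy correction of order $(\|z\|_1/L)\log\|z\|_1$.

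Next I introduce a box renormalisation at scale $L$: call $B_v:=Lv+[-L,L)^d$ \emph{good} if for every admissible pair of endpoints $y,y'$ on opposite faces of $B_v$, $a_\lambda^q(y,y',\omega_{p_n})\geq(1-\epsilon/4)\alpha_\lambda^p(y'-y)$. Goodness depends on $\omega_{p_n}$ only in a bounded neighbourhood of $B_v$, so the resulting field is finitely dependent. Proposition~\ref{prop:modify} provides the $L^1$-convergence $\E[a_\lambda^q(0,\cdot,\omega_p)]/\|\cdot\|\to\alpha_\lambda^p$ under $\omega_p$; combined with a quantitative concentration at scale $L$ and the finite-dimensional convergence $\omega_{p_n}|_\text{box}\to\omega_p|_\text{box}$ from the coupling of Subsection~\ref{subsect:coupling}, one first chooses $L=L(\epsilon)$ large and then $n_0=n_0(L,\epsilon)$ so that for $n\geq n_0$ the probability $\P(v\text{ good})$ is arbitrarily close to $1$ uniformly in $n$. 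Proposition~\ref{prop:domi} then stochastically dominates the good field by a supercritical Bernoulli site percolation, and a standard Peierls estimate bounds the probability that a bad barrier separates $0$ from $z$ by $C_1 e^{-C_2 t}$.

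On the complementary event, every admissible chain $(y_i)$ satisfies $\sum_i a_\lambda^q(y_i,y_{i+1},\omega_{p_n})\geq(1-\epsilon/4)\sum_i\alpha_\lambda^p(y_{i+1}-y_i)\geq(1-\epsilon/4)\alpha_\lambda^p(z)\geq(1-\epsilon/4)t$ by convexity of $\alpha_\lambda^p$. Subtracting the entropy correction yields $a_\lambda^q(0,z,\omega_{p_n})\geq(1-2\epsilon)t$, provided $L$ is tuned so that (i) the Peierls bound decays exponentially in $t$ and (ii) $(\|z\|_1/L)\log\|z\|_1\leq(\epsilon/2)t$; this is feasible because $t\asymp\|z\|_1$ under the standing hypothesis $\alpha_\lambda^p(z)\geq t$ and the bounds $\lambda\|x\|_1\leq\alpha_\lambda^p(x)\leq\alpha_\lambda^p(\xi_1)\|x\|_1$ of Proposition~\ref{prop:K_lyap}.

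The principal obstacle, absent in the FPP analogue, is the entropy term $m\log\#\tilde S\sim(\|z\|_1/L)\log\|z\|_1$ forced by the cut lemma: since the travel cost cannot be realised along a single trajectory, the minimum over $y\in\tilde S$ at each cut unavoidably costs $\log\#\tilde S$. Simultaneously calibrating $L$ so that (i) per-box failure is deep enough in the supercritical regime of the induced site percolation to give a Peierls bound truly exponential in $t$ and (ii) the combined entropy correction stays below $\epsilon t$ requires a mesoscopic concentration estimate for $a_\lambda^q(y,y',\omega_p)$ and, in general, a multi-scale bootstrap; balancing these two constraints is the technical heart of the argument. Uniformity in $n$ is a secondary concern, resolved by the total-variation continuity of the finite-dimensional marginals of the coupling introduced in Subsection~\ref{subsect:coupling}.
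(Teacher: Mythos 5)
Your plan follows Ahlberg's renormalisation strategy, as does the paper, but the way you decompose the travel cost introduces a cost that the paper's argument is specifically designed to avoid, and this gap is not closed by the sketch you give.

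The crux is the cut inequality. Because $e_\lambda(x,z)\leq\sum_{y\in\tilde S}e_\lambda(x,y)e_\lambda(y,z)$, passing to a minimum over $y\in\tilde S$ necessarily subtracts $\log\#\tilde S$ from the cost, and $\#\tilde S\asymp\|z\|_1^{d-1}$, so each cut costs $\sim(d-1)\log\|z\|_1$. With $m\asymp\|z\|_1/L$ cuts the total entropy correction is of order $(\|z\|_1/L)\log\|z\|_1$, which for any \emph{fixed} mesoscopic scale $L$ eventually exceeds $\epsilon t\asymp\epsilon\lambda\|z\|_1$. Forcing $L$ to grow like $\log\|z\|_1$ to salvage the budget breaks the fixed-parameter stochastic domination of Proposition~\ref{prop:domi} (the dominated Bernoulli parameter would have to be retuned at every scale) and, even if that were managed by a multi-scale bootstrap, the Peierls estimate would decay like $\exp(-ct/\log t)$ rather than $e^{-ct}$, which is not what the proposition claims. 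You flag exactly this tension as ``the technical heart'' and mention a bootstrap, but the argument as written does not carry it out; it is a genuine missing step, not a detail.

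The paper sidesteps the entropy cost entirely by a different notion of good box. Rather than requiring the travel cost between faces to be large, it defines
\begin{align*}
c(v,\ell,\omega_{p'})
:= \inf\Bigl\{ -\log E_{\omega_{p'}}^x\bigl[ e^{-\lambda T(v,\ell)} \bigr]: x \in 2r\ell v+[-r\ell,r\ell)^d \Bigr\},
\end{align*}
with $T(v,\ell)$ the exit time from the translated Lyapunov ball $2r\ell v+\mathcal{B}(\ell)$, and calls $v$ good when $c(v,\ell,\omega_{p_n})>\ell(1-\epsilon)$. The decisive point is that the exit functional $E^x[e^{-\lambda T}]$ is multiplicative under the strong Markov property: defining the stopping times $\tau_i$ (successive exits from the current renormalized ball) and the subsequence $\rho_i$ of good exits, one obtains directly
\begin{align*}
E_{\omega_{p_n}}^{[0]_{p_n}}\Biggl[\prod_{i=0}^{\nu-1}e^{-\lambda(\tau_{\rho_{i+1}}-\tau_{\rho_i})}\Biggr]\leq e^{-\nu\ell(1-\epsilon)},
\end{align*}
with no $\log\#\tilde S$ term ever appearing. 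The only combinatorial entropy in the paper is the union bound over renormalized paths $w\in\mathcal{W}_m$ in the Peierls step, and that is beaten outright by choosing $\kappa$ so that $D(\delta\|\kappa)>\log(2/r)$. Using the Lyapunov ball $\mathcal{B}(\ell)=\{\alpha_\lambda^p\leq\ell\}$ rather than a Euclidean box $B_v$ is also what makes the per-exit lower bound $\ell(1-\epsilon)$ come out cleanly, since the shape encodes the norm. Your observation that this model lacks an optimal trajectory is exactly the right diagnosis, but the remedy is to replace ``cost to reach a cut point'' by ``cost to exit a Lyapunov ball from anywhere inside a small core,'' which is what $c(v,\ell,\cdot)$ does, rather than to pay entropy at each hyperplane.

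A secondary remark: your goodness event requires a mesoscopic concentration estimate for $a_\lambda^q(y,y',\omega_p)$, which is precisely the kind of input the paper says in Subsection~\ref{subsect:main} it wants to avoid because the constants are hard to control uniformly in $p$. The paper's Lemmas~\ref{lem:c} and~\ref{lem:c_n} get $\P(v\text{ good})\to1$ from the almost-sure shape theorem (Proposition~\ref{prop:modify}) plus the coupling, with no concentration inequality.
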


Before proving this proposition, we complete the proof of \eqref{eq:lyap_l} in Theorem~\ref{thm:lyap_ul}.

\begin{proof}[\bf Proof of \eqref{eq:lyap_l} in Theorem~\ref{thm:lyap_ul}]
Given $\epsilon>0$ and $x \in \Z^d \setminus \{0\}$,  we use Proposition~\ref{prop:lld} and the Borel--Cantelli lemma to obtain that for all large $n$, with probability one,
\begin{align*}
	\alpha_\lambda^{p_n}(x)
	= \liminf_{k \to \infty} \frac{1}{k} a_\lambda^{p_n}(0,kx,\omega_{p_n})
	\geq \alpha_\lambda^p(x)(1-2\epsilon).
\end{align*}
Therefore, \eqref{eq:lyap_l} follows by letting $n \to \infty$ and $\epsilon \searrow 0$.
\end{proof}

It remains to prove Proposition~\ref{prop:lld}.
We follow the approach taken in \cite[Section~3]{Ahl15}.
First of all, let us choose appropriate constants for our proof.
Given $\lambda,\epsilon>0$ and $p \in (p_c,1]$, fix $r \in \Q$ and $\delta \in (1/2,1)$ with
\begin{align*}
	0<r<\frac{\epsilon}{6d\Cr{AP}^{(p)}(\lambda+\log(2d))\alpha_\lambda^p(\xi_1)}
\end{align*}
and
\begin{align*}
	\delta^3 \geq \frac{1-2\epsilon}{1-\epsilon}(1+2dr\alpha_\lambda^p(\xi_1)).
\end{align*}
In addition, pick $\kappa \in (\delta,1)$ such that
\begin{align*}
	D(\delta \| \kappa):=\delta \log\frac{\delta}{\kappa}+(1-\delta)\log\frac{1-\delta}{1-\kappa}>\log \frac{2}{r}.
\end{align*}

We next prepare some notation and lemmata.
Set for $t \geq 0$,
\begin{align*}
	\mathcal{B}(t):=\{ y \in \R^d:\alpha_\lambda^p(y) \leq t \}.
\end{align*}
It is clear from Proposition~\ref{prop:K_lyap} that $\mathcal{B}(t)$ is a nonrandom, compact, convex set of $\R^d$ with $B_1(0,t/\alpha_\lambda^p(\xi_1)) \subset \mathcal{B}(t) \subset B_1(0,t)$.
By the choice of $r$, one has $[-2r,2r)^d \subset \mathcal{B}(1)$.
Then, define for $v \in \Z^d$, $\ell \in \N$ and $p' \in (p_c,1]$,
\begin{align*}
	c(v,\ell,\omega_{p'})
	:= \inf\Bigl\{ -\log E_{\omega_{p'}}^x\bigl[ e^{-\lambda T(v,\ell)} \bigr]: x \in 2r\ell v+[-r\ell,r\ell)^d \Bigr\},
\end{align*}
where
\begin{align*}
	T(v,\ell):=\inf\{ n \geq 0:X_n \not\in 2r\ell v+\mathcal{B}(\ell) \}.
\end{align*}

\begin{lem}\label{lem:c}
We have almost surely,
\begin{align*}
	\liminf_{\ell \to \infty} \frac{1}{\ell}c(0,\ell,\omega_p)>1-\epsilon.
\end{align*}
\end{lem}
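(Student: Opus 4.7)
The plan is to show that almost surely, $c(0, \ell, \omega_p) \geq (1 - \epsilon/2)\ell$ for all large enough $\ell$, whence $\liminf_\ell c(0, \ell, \omega_p)/\ell \geq 1 - \epsilon/2 > 1 - \epsilon$. Equivalently, one wants the $\omega_p$-a.s.\ uniform bound $E_{\omega_p}^x[e^{-\lambda T(0, \ell)}] \leq e^{-(1 - \epsilon/2)\ell}$ for every $x \in [-r\ell, r\ell)^d$ and all $\ell$ large. First I would reduce to $x \in \mathcal{C}_\infty(\omega_p)$: if $x \notin \mathcal{O}(\omega_p)$ the expectation is zero by convention, and if $x$ lies in a finite open cluster then the walk stays there, so $T(0, \ell) = +\infty$ unless that cluster extends beyond $\mathcal{B}(\ell)$. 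The choice of $r$ places the starting box strictly inside $\mathcal{B}(\ell)$ at distance $\Theta(\ell)$ from its boundary, and Proposition~\ref{prop:GM}-(iii) shows that the probability that some finite cluster meeting $[-r\ell, r\ell)^d$ has diameter of order $\ell$ is summable in $\ell$; Borel--Cantelli eliminates this scenario.

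For $x \in \mathcal{C}_\infty(\omega_p) \cap [-r\ell, r\ell)^d$, the walk stays in $\mathcal{C}_\infty(\omega_p)$ and its exit location $X_{T(0, \ell)}$ lies in the set $\Lambda_\ell$ of lattice points of $\mathcal{C}_\infty(\omega_p)$ that are outside $\mathcal{B}(\ell)$ but have a lattice neighbor inside $\mathcal{B}(\ell)$. On $\{X_{T(0, \ell)} = y\}$ one has $T(0, \ell) = H(y)$ (since $y \notin \mathcal{B}(\ell)$ is not visited before the exit), so summing over exit points and using the definition of $a_\lambda$ gives
\[
	E_{\omega_p}^x\bigl[e^{-\lambda T(0, \ell)}\bigr] \leq \sum_{y \in \Lambda_\ell} e^{-a_\lambda(x, y, \omega_p)}.
\]
The triangle inequality \eqref{eq:triangle} with intermediate point $[0]_p$ gives
\[
	a_\lambda(x, y, \omega_p) \geq a_\lambda^p(0, y, \omega_p) - a_\lambda^p(0, x, \omega_p).
\]

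Now I apply Proposition~\ref{prop:modify} with $q = p$: almost surely, for any preassigned $\eta > 0$ there is a random $L = L(\omega_p)$ with $|a_\lambda^p(0, z, \omega_p) - \alpha_\lambda^p(z)| \leq \eta \|z\|_1$ whenever $\|z\|_1 \geq L$. For $y \in \Lambda_\ell$, $\alpha_\lambda^p(y) > \ell$ and $\|y\|_1 = O(\ell)$, hence $a_\lambda^p(0, y, \omega_p) \geq \ell(1 - C\eta)$ for $\ell$ large. For $x \in \mathcal{C}_\infty(\omega_p) \cap [-r\ell, r\ell)^d$ with $\|x\|_1 \geq L$, Proposition~\ref{prop:modify} combined with \eqref{eq:lyap_bd} yields $a_\lambda^p(0, x, \omega_p) \leq (\alpha_\lambda^p(\xi_1) + \eta) dr\ell$, while the finitely many $x$ with $\|x\|_1 < L$ contribute only an $\omega_p$-measurable $O(1)$ constant. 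The choice of $r$ forces $dr\alpha_\lambda^p(\xi_1) < \epsilon/6$ (using $\Cr{AP}^{(p)} \geq 1$ and $\lambda + \log(2d) > 1$), so taking $\eta$ small enough produces $a_\lambda(x, y, \omega_p) \geq (1 - \epsilon/3)\ell$ uniformly in $x$ and $y$. Since $|\Lambda_\ell| = O(\ell^{d - 1})$, the sum above is $O(\ell^{d-1}) e^{-(1 - \epsilon/3)\ell} \leq e^{-(1 - \epsilon/2)\ell}$ for $\ell$ large, which is what was needed.

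The main obstacle is the uniformity in $x$: the starting box contains $\Theta(\ell^d)$ candidates while Proposition~\ref{prop:modify} only provides a pointwise a.s.\ asymptotic for $a_\lambda^p(0, \cdot, \omega_p)$. I sidestep a uniformly sharp estimate on $a_\lambda^p(0, x, \omega_p)$ and instead control it by the deterministic envelope $\alpha_\lambda^p(\xi_1)\|x\|_1$ from \eqref{eq:lyap_bd} (plus a bounded correction for the finitely many $x$ with $\|x\|_1 < L$); the smallness of $r$ absorbs this crude loss, while the sharp form of Proposition~\ref{prop:modify} is reserved for $y$, which must lie at distance $\Theta(\ell)$ from the origin.
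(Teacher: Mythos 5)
Your proof is correct and follows essentially the same route as the paper: both restrict to the infinite cluster via Proposition~\ref{prop:GM} and the Borel--Cantelli lemma, decompose the exit probability over the exit point of $\mathcal{B}(\ell)$, shift the base point to $[0]_p$ by the triangle inequality \eqref{eq:triangle}, and then invoke Proposition~\ref{prop:modify} to get a lower bound of order $\ell(1-o(1))$ on the dominant term while the smallness of $r$ absorbs the contribution from the starting box. The only differences are presentational: the paper extracts a single near-optimal pair $(x_\ell,y_\ell)$ by pigeonhole (paying $\log\#\bar{\partial}\mathcal{B}(\ell)$) rather than keeping the sum over $\Lambda_\ell$, and it controls the base-point shift via Lemma~\ref{lem:maximal} applied to $d_{\omega_p}([0]_p,x_\ell)$ rather than via Proposition~\ref{prop:modify} together with \eqref{eq:lyap_bd}.
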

\begin{proof}
Proposition~\ref{prop:GM}-(ii), (iii) and the Borel--Cantelli lemma give that with probability one, for all large $\ell$, $\mathcal{C}_\infty(\omega_p) \cap [-r\ell,r\ell)^d \not= \emptyset$ and $\#\mathcal{C}_x(\omega_p)=\infty$ for all $x \in [-r\ell,r\ell)^d$ with $\# \mathcal{C}_x(\omega_p) \geq r\ell$.
Therefore, we can restrict ourselves to this event.

Since $[0]_p \in [-r\ell,r\ell)^d$, $c(0,\ell,\omega_p)$ is finite.
This means that there exist $x_\ell,y_\ell \in \mathcal{O}(\omega_p)$ such that $x_\ell$ and $y_\ell$ are linked by an open lattice path,
\begin{align*}
	x_\ell \in [-r\ell,r\ell)^d,\qquad \ell<\alpha_\lambda^p(y_\ell) \leq \ell+\alpha_\lambda^p(\xi_1)
\end{align*}
and
\begin{align*}
	c(0,\ell,\omega_p)
	\geq a_\lambda(x_\ell,y_\ell,\omega_p)-\log\#\bar{\partial}\mathcal{B}(\ell),
\end{align*}
where $\bar{\partial}\mathcal{B}(\ell)$ denotes the outer boundary of $\mathcal{B}(\ell)$ on $\Z^d$, i.e.,
\begin{align*}
	\bar{\partial}\mathcal{B}(\ell)=\{ y \in \Z^d \setminus \mathcal{B}(\ell):
	\exists z \in \mathcal{B}(\ell) \cap \Z^d \text{ such that } \|y-z\|_1=1 \}.
\end{align*}
In particular, $\#\mathcal{C}_{x_\ell}(\omega_p) \geq r\ell$ holds by the choice of $r$.
Hence, both $x_\ell$ and $y_\ell$ are included in $\mathcal{C}_\infty(\omega_p)$, and
\begin{align*}
	|a_\lambda(x_\ell,y_\ell,\omega_p)-a_\lambda^p(0,y_\ell,\omega_p)|
	\leq (\lambda+\log(2d))d_{\omega_p}([0]_p,x_\ell).
\end{align*}
We use Lemma~\ref{lem:maximal} to obtain that for all large $\ell$, the right side is not larger than
\begin{align*}
	3\Cr{AP}^{(p)}(\lambda+\log(2d))\|x_\ell\|_1< \frac{\epsilon\ell}{2}.
\end{align*}
Since $\|y_\ell\|_1 \to \infty$ as $\ell \to \infty$, this together with Proposition~\ref{prop:modify} proves
\begin{align*}
	\liminf_{\ell \to \infty} \frac{1}{\ell}c(0,\ell,\omega_p)
	&\geq \liminf_{\ell \to \infty} \frac{1}{\ell} a_\lambda^p(0,y_\ell,\omega_p)
		-\limsup_{\ell \to \infty} \frac{1}{\ell}\log\#\bar{\partial}\mathcal{B}(\ell)-\frac{\epsilon}{2}\\
	&> 1-\epsilon,
\end{align*}
and the lemma follows.
\end{proof}

\begin{lem}\label{lem:c_n}
Assume that a sequence $(p_n)_{n=1}^\infty$ of $(p_c,1]$ converges to $p$ as $n \to \infty$.
Then, for each $\ell \in \N$,
\begin{align*}
	\lim_{n \to \infty} \P\bigl( c(0,\ell,\omega_{p_n})>\ell(1-\epsilon) \bigr)
	= \P\bigl( c(0,\ell,\omega_p)>\ell(1-\epsilon) \bigr).
\end{align*}
\end{lem}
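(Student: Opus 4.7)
The plan is to observe that, for each fixed $\ell \in \N$, the random variable $c(0,\ell,\omega_{p'})$ depends on $\omega_{p'}$ only through finitely many edge variables, and then to invoke the coupling from Subsection~\ref{subsect:coupling} together with bounded convergence.

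First I would pin down the measurability. Since the random walk starts in $[-r\ell,r\ell)^d \subset \mathcal{B}(\ell)$ and is killed at the first exit time $T(0,\ell)$ from $\mathcal{B}(\ell)$, every edge it traverses before $T(0,\ell)$ has at least one endpoint in $\mathcal{B}(\ell) \cap \Z^d$. Let
\begin{align*}
	E_\ell := \bigl\{ e \in \mathcal{E}^d : e \cap (\mathcal{B}(\ell) \cap \Z^d) \neq \emptyset \bigr\}.
\end{align*}
Because $\mathcal{B}(\ell)$ is bounded, $E_\ell$ is a finite set. For any $x \in [-r\ell,r\ell)^d \cap \Z^d$, the quantity
\begin{align*}
	E_{\omega_{p'}}^x\bigl[ e^{-\lambda T(0,\ell)} \bigr]
\end{align*}
is determined by the transition probabilities $\pi_{\omega_{p'}}(y,\cdot)$ for $y \in \mathcal{B}(\ell) \cap \Z^d$, and these in turn depend only on $(\omega_{p'}(e))_{e \in E_\ell}$. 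Taking the infimum over the finite set $[-r\ell,r\ell)^d \cap \Z^d$ preserves this finite-range dependence, so $c(0,\ell,\omega_{p'})$ is a (measurable) function of $(\omega_{p'}(e))_{e \in E_\ell}$ alone.

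Next I would apply the coupling. By the second bullet in Subsection~\ref{subsect:coupling}, for each edge $e \in \mathcal{E}^d$ one has $\omega_{p_n}(e) = \omega_p(e)$ for all sufficiently large $n$, almost surely. Since $E_\ell$ is finite, a union bound yields that, with probability one, there exists a random $N = N(\omega, \ell) \in \N$ such that $\omega_{p_n}(e) = \omega_p(e)$ for every $e \in E_\ell$ and every $n \geq N$. Combined with the preceding measurability observation, this gives
\begin{align*}
	c(0,\ell,\omega_{p_n}) = c(0,\ell,\omega_p) \qquad \text{for all } n \geq N,\ \P\hyphen\as
\end{align*}

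Finally I would conclude by bounded convergence. The indicators $\1{\{ c(0,\ell,\omega_{p_n}) > \ell(1-\epsilon)\}}$ are bounded by $1$ and, by the previous step, coincide with $\1{\{ c(0,\ell,\omega_p) > \ell(1-\epsilon)\}}$ for all $n \geq N$ almost surely; in particular they converge to it pointwise a.s. Dominated convergence then gives the stated limit. The only mildly delicate point is the measurability check: one must make sure that the infimum defining $c(0,\ell,\omega_{p'})$ really is taken over a deterministic, finite index set and that the starting-point convention (when $x \notin \mathcal{O}(\omega_{p'})$) behaves consistently, but since $E_\ell$ is fixed and the coupled configurations agree on $E_\ell$ for large $n$, no issue arises.
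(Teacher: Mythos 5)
Your argument matches the paper's proof exactly: both observe that $c(0,\ell,\cdot)$ depends only on the finitely many edges meeting $\mathcal{B}(\ell)$, invoke the coupling of Subsection~\ref{subsect:coupling} to get almost-sure eventual agreement of $\omega_{p_n}$ and $\omega_p$ on that finite edge set, and then conclude by dominated convergence. The extra care you take over measurability and the finite index set is sound but not strictly needed, since the paper states these dependences directly.
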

\begin{proof}
Since $\mathcal{B}(\ell)$ is bounded, with probability one, if $n$ is large enough, then $\omega_{p_n}(e)=\omega_p(e)$ holds for all $e \in \mathcal{E}^d$ intersecting $\mathcal{B}(\ell)$.
Since $c(0,\ell,\cdot)$ depends only on the configurations of edges intersecting $\mathcal{B}(\ell)$, with probability one, $c(0,\ell,\omega_{p_n})=c(0,\ell,\omega_p)$ holds for all large $n$.
This yields that
\begin{align*}
	\lim_{n \to \infty}\1{\{ c(0,\ell,\omega_{p_n})>\ell(1-\epsilon) \}}
	= \1{\{ c(0,\ell,\omega_p)>\ell(1-\epsilon) \}}.
\end{align*}
Accordingly, the lemma immediately follows from Lebesgue's dominated convergence theorem.
\end{proof}

We say that a site $v \in \Z^d$ is \emph{$(\ell,p_n)$-good} if
\begin{align*}
	c(v,\ell,\omega_{p_n})>\ell(1-\epsilon).
\end{align*}
Note that $(\1{\{ v \text{ is $(\ell,p_n)$-good} \}})_{v \in \Z^d}$ is a finitely dependent family of random variables taking values in $\{ 0,1 \}$.
In addition, from Lemmata~\ref{lem:c} and \ref{lem:c_n},
\begin{align*}
	\lim_{\ell \to \infty}\lim_{n \to \infty} \inf_{v \in \Z^d} \P(v \text{ is $(\ell,p_n)$-good})=1.
\end{align*}
It follows from Proposition~\ref{prop:domi} that there exist $\ell,N \in \N$ such that for all $n \geq N$, $(\1{\{ v \text{ is $(\ell,p_n)$-good} \}})_{v \in \Z^d}$ stochastically dominates the independent Bernoulli site percolation $\eta_\kappa=(\eta_\kappa(v))_{v \in \Z^d}$ of parameter $\kappa$.

From now on, fix $\ell$, $N$ and $q \in (p_c,1)$ such that $r\ell \in \N$, $q<p_n \wedge p$ for all $n \geq N$ and the stochastic domination above is established.\qed

\begin{proof}[\bf Proof of Proposition~\ref{prop:lld}]
For each $m \geq 2$, denote by $\mathcal{W}_m$ the set of all sequences $w=(w_1,\dots,w_m)$ of distinct points of $\Z^d$ such that $\|w_1\|_1 \leq 2t$ and $\|w_i-w_{i+1}\|_1 \leq 2/r$ for $1 \leq i \leq m-1$.
Then, our first claim is that there exist constants $c$ and $c'$ (which are independent of $p_n$'s) such that for all $n \geq N$,
\begin{align}\label{eq:SD}
\begin{split}
	&\P\Biggl( \exists m \geq t/(8\ell),\,\exists w \in \mathcal{W}_m \text{ such that }
		\sum_{i=1}^m\1{\{ w_i \text{ is $(\ell,p_n)$-good} \}}<\delta m \Biggr)\\
	&\leq ce^{-c't}.
\end{split}
\end{align}
From the union bound and the stochastic domination, the left side of \eqref{eq:SD} is not larger than
\begin{align*}
	\sum_{w \in \mathcal{W}_m} \P\Biggl( \sum_{i=1}^m\1{\{ w_i \text{ is $(\ell,p_n)$-good} \}}<\delta m \Biggr)
	\leq \sum_{w \in \mathcal{W}_m} \P\Biggl( \sum_{i=1}^m\eta_\kappa(w_i)<\delta m \Biggr).
\end{align*}
A standard calculation shows that all the probabilities in the right side are smaller than or equal to $e^{-mD(\delta\| \kappa)}$ and the total number of choices for $w \in \mathcal{W}_m$ is of order $t^d(2/r)^{m-1}$.
Thus, since $m \geq t/(8\ell)$, \eqref{eq:SD} immediately follows from the choice of $\kappa$.

We move to the proof of Proposition~\ref{prop:lld}.
Fix $n \geq N$, $t \geq 4\ell/(1-\delta)$ and $z \in \Z^d \setminus \{0\}$ with $\alpha_\lambda^p(z) \geq t$.
The boxes $2r\ell v+[-r\ell,r\ell)^d$, $v \in \Z^d$, form a partition of $\Z^d$, and each $x \in \Z^d$ is contained in precisely one box.
Thus, write $x^*$ for the index such that $x \in 2r\ell x^*+[-r\ell,r\ell)^d$.
We now introduce the stopping times $(\tau_i)_{i=-1}^\infty$ of the filtration $\mathcal{F}_k:=\sigma(X_0,\dots,X_k)$, $k \geq 0$, as follows:
\begin{align*}
	&\tau_{-1}=\tau_0:=0,\\
	&\tau_{i+1}:=\inf\{ k>\tau_i: X_k \not\in 2r\ell X_{\tau_i}^*+\mathcal{B}(\ell) \},\qquad i \geq 0.
\end{align*}
In addition, define $\rho_0:=-1$, and by induction for $i \geq 0$,
\begin{align*}
	\rho_{i+1}:=\inf\{ j>\rho_i: X_{\tau_j}^* \text{ is $(\ell,p_n)$-good} \}.
\end{align*}
Then, $\tau_{\rho_i}$ is a stopping time of the filtration $(\mathcal{F}_k)_{k=0}^\infty$ and $X_{\tau_{\rho_i}}^*$ is $(\ell,p_n)$-good.

Consider the event $\Gamma_t$ that for all $m \geq t/(8\ell)$ and $w \in \mathcal{W}_m$,
\begin{align*}
	\sum_{i=1}^m\1{\{ w_i \text{ is $(\ell,p_n)$-good} \}} \geq \delta m,
\end{align*}
and $\mathcal{C}_\infty(\omega_q)$ intersects both $B_1(0,(1-\delta)t/(2\alpha_\lambda^p(\xi_1)))$ and $B_1(z,(1-\delta)t/(2\alpha_\lambda^p(\xi_1)))$.
Denote by $M$ the number of distinct ($2r\ell v+\mathcal{B}(\ell)$)'s from which the random walk starting at $[0]_{p_n}$ exits before reaching $[z]_{p_n}$.
We then have on $\Gamma_t$,
\begin{align*}
	t &\leq \alpha_\lambda^p(z-[z]_{p_n})+\alpha_\lambda^p([z]_{p_n}-[0]_{p_n})+\alpha_\lambda^p([0]_{p_n})\\
	&\leq (1-\delta)t+(M+1)\bigl\{ (1+dr\alpha_\lambda^p(\xi_1))\ell+\alpha_\lambda^p(\xi_1) \bigr\}.
\end{align*}
By the choice of $r$, $\ell$ and $\delta$,
\begin{align*}
	M \geq \frac{\delta t}{(1+2dr\alpha_\lambda^p(\xi_1))\ell}-1
\end{align*}
and
\begin{align*}
	\frac{(\delta-\delta^2)t}{(1+2dr\alpha_\lambda^p(\xi_1))\ell} \geq \frac{(1-\delta)t}{4\ell} \geq 1.
\end{align*}
Therefore,
\begin{align*}
	M \geq \frac{\delta^2 t}{(1+2dr\alpha_\lambda^p(\xi_1))\ell}
	\geq \frac{t}{8\ell}.
\end{align*}
To shorten notation,
\begin{align*}
	\nu:=\biggl\lceil \frac{\delta^3t}{(1+2dr\alpha_\lambda^p(\xi_1))\ell} \biggr\rceil.
\end{align*}
If $t$ is large enough, then on the event $\Gamma_t$, one has $\tau_{\rho_{\nu}}<H([z]_{p_n})$ $P_{\omega_{p_n}}^{[0]_{p_n}} \hyphen \as$, and
\begin{align*}
	&a_\lambda^{p_n}(0,z,\omega_{p_n})\\
	&\geq -\log E_{\omega_{p_n}}^{[0]_{p_n}} \Biggl[
		\exp\bigl\{ -\lambda \bigl( \tau_{\rho_{\nu}+1}-\tau_{\rho_{\nu}} \bigr) \bigr\}
		\prod_{i=0}^{\nu-1} e^{-\lambda(\tau_{\rho_{i+1}}-\tau_{\rho_i})} \Biggr].
\end{align*}
We use the strong Markov property with respect to $\tau_{\rho_i}$, $1 \leq i \leq \nu$, and the fact that $X_{\tau_{\rho_i}}^*$'s are $(\ell,p_n)$-good to obtain
\begin{align*}
	E_{\omega_{p_n}}^{[0]_{p_n}} \Biggl[ \prod_{i=0}^{\nu-1} e^{-\lambda(\tau_{\rho_{i+1}}-\tau_{\rho_i})} \Biggr]
	\leq e^{-\nu \ell(1-\epsilon)}.
\end{align*}
The choice of $\delta$ guarantees that on $\Gamma_t$,
\begin{align*}
	a_\lambda^{p_n}(0,z,\omega_{p_n}) \geq \nu\ell(1-\epsilon)>t(1-2\epsilon).
\end{align*}
Hence,
\begin{align*}
	\P\bigl( a_\lambda^{p_n}(0,z,\omega_{p_n})<t(1-2\epsilon) \bigr) \leq \P(\Gamma_t^c),
\end{align*}
and the proposition follows from Proposition~\ref{prop:GM} and \eqref{eq:SD}.
\end{proof}

\section{Continuity for the rate function}\label{sect:rate_conti}
The aim of this section is to prove Theorem~\ref{thm:rate_conti}.
To begin with, in Subsection~\ref{subsect:relation}, we observe a relation between the Lyapunov exponent and the so-called time constant for the chemical distance.
The proof of Theorem~\ref{thm:rate_conti} is given in Subsection~\ref{subsect:rate_conti}.

\subsection{Relation to the time constant for the chemical distance}\label{subsect:relation}
Garet and Marchand have investigated the asymptotic behavior of the chemical distance.
The following proposition is one of their results~\cite{GarMar04}, which states that the chemical distance is asymptotically equivalent to a deterministic
norm on $\R^d$.

\begin{prop}\label{prop:tconst}
Let $p \in (p_c,1]$.
There exists a norm $\mu^p(\cdot)$ on $\R^d$ (which is called the time constant) such that almost surely on the event $\{ 0 \in \mathcal{C}_\infty(\omega_p) \}$,
\begin{align*}
	\lim_{\substack{k \to \infty\\ kx \in \mathcal{C}_\infty(\omega_p)}}
	\frac{1}{k}d_{\omega_p}(0,kx)=\mu^p(x).
\end{align*}
\end{prop}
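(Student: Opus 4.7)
The plan is to invoke Kingman's subadditive ergodic theorem on the modified chemical distance that smooths over the issue of endpoints lying outside the infinite cluster, and then transfer the result back to the original chemical distance on the event $\{0 \in \mathcal{C}_\infty(\omega_p)\}$. Concretely, set
\begin{align*}
	D(x,y):=d_{\omega_p}([x]_p,[y]_p),\qquad x,y \in \Z^d,
\end{align*}
which is well-defined since both $[x]_p$ and $[y]_p$ lie in $\mathcal{C}_\infty(\omega_p)$. The triangle inequality for the chemical distance gives $D(x,z) \leq D(x,y) + D(y,z)$, so for each fixed $x \in \Z^d$ the family $D(ix,jx)$, $0 \leq i<j$, is a subadditive process. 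Translation invariance of $\omega_p$ makes it stationary, and Lemma~\ref{lem:ible} (applied with $q=p$ and $\gamma=1$) ensures $\E[D(0,x)]<\infty$.

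First I would apply Kingman's theorem along each lattice direction $x \in \Z^d$ to obtain the a.s.\ and $L^1$ limit
\begin{align*}
	\mu^p(x):=\lim_{k \to \infty} \frac{1}{k} D(0,kx)=\inf_{k \geq 1}\frac{1}{k}\E[D(0,kx)] \in [0,\infty).
\end{align*}
By construction $\mu^p(nx)=n\mu^p(x)$ for $n \in \N$, so $\mu^p$ extends unambiguously to $\Q^d$ by positive rational homogeneity. Subadditivity $\mu^p(x+y) \leq \mu^p(x)+\mu^p(y)$ is inherited by the same Cauchy splitting and coupling argument used in the proof of Proposition~\ref{prop:modify} (approximating $x+y$ by $\lfloor n x \rfloor + \lfloor n y \rfloor$ scaled appropriately), together with Lemma~\ref{lem:maximal} to control the cost of moving between the hole approximations. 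The bound $\mu^p(x) \leq \Cr{ible}^{(p)}\|x\|_1$ from Lemma~\ref{lem:ible} gives Lipschitz continuity on $\Q^d$, which extends $\mu^p$ to a subadditive, positively homogeneous function on $\R^d$. Invariance under coordinate permutations and reflections is inherited from the distributional invariance of $\omega_p$.

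To see that $\mu^p$ is a norm rather than a seminorm, I would exploit the elementary lower bound $d_{\omega_p}(u,v) \geq \|u-v\|_1$ (any open path has length at least the $\ell^1$-distance), which yields $D(0,kx) \geq \|[kx]_p-[0]_p\|_1$. Lemma~\ref{lem:maximal} shows $\|[kx]_p-kx\|_1=o(k)$ almost surely, so $\mu^p(x) \geq \|x\|_1>0$ whenever $x \neq 0$. Finally, to recover the statement of Proposition~\ref{prop:tconst}, note that on $\{0 \in \mathcal{C}_\infty(\omega_p)\}$ one has $[0]_p=0$, and for $k$ with $kx \in \mathcal{C}_\infty(\omega_p)$ one has $[kx]_p=kx$, so $D(0,kx)=d_{\omega_p}(0,kx)$ along such $k$; the limit therefore transfers without loss.

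The main technical obstacle is ensuring that the subadditive limit obtained along the discrete rays $\{kx:k \in \N\}$ assembles into a genuine norm on $\R^d$: subadditivity across non-collinear directions requires bridging the gap between $[x+y]_p$ and $[x]_p+[y]_p$, which is where Lemma~\ref{lem:maximal} is essential for a uniform control. The remaining arguments (homogeneity, continuity, symmetry) are either routine rational approximations or consequences of the translation/reflection invariance of $\P_p$.
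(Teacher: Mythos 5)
The paper does not prove Proposition~\ref{prop:tconst} at all; it simply cites Garet and Marchand \cite{GarMar04} as the source and moves on. So there is no paper proof to compare against line by line. That said, your sketch is a correct and standard one: regularize by passing to $D(x,y):=d_{\omega_p}([x]_p,[y]_p)$ so that the quantity is always finite, invoke Kingman's subadditive ergodic theorem along lattice rays (integrability from Lemma~\ref{lem:ible} with $q=p$, $\gamma=1$; ergodicity of the shift because $\omega_p$ is i.i.d.), extend by homogeneity to $\Q^d$ and by Lipschitz continuity to $\R^d$, obtain positivity from $d_{\omega_p}(u,v)\geq\|u-v\|_1$, and transfer back to the event $\{0\in\mathcal{C}_\infty(\omega_p)\}$ using that $[0]_p=0$ and $[kx]_p=kx$ along the admissible subsequence. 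This is essentially the argument in \cite{GarMar04} and it exactly parallels the paper's own Proposition~\ref{prop:modify}, which runs the identical machinery for the travel cost $a_\lambda$ rather than the chemical distance.

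Two small inaccuracies worth correcting. First, the subadditivity $\mu^p(x+y)\leq\mu^p(x)+\mu^p(y)$ for $x,y\in\Z^d$ does not require the ``Cauchy splitting'' argument or Lemma~\ref{lem:maximal}: one has $D(0,k(x+y))\leq D(0,kx)+D(kx,k(x+y))$, and by translation invariance $D(kx,k(x+y))$ is equidistributed with $D(0,ky)$, so subadditivity of $\mu^p$ drops out directly after dividing by $k$ and taking expectations; the extension to $\Q^d$ and $\R^d$ is then routine. Second, the claim $\|[kx]_p-kx\|_1=o(k)$ a.s.\ is not an application of Lemma~\ref{lem:maximal} (which controls $d_{\omega_q}([x]_q,[y]_q)$, not the displacement of $kx$ to its hole); the right tool is Proposition~\ref{prop:GM}(ii) together with Borel--Cantelli, which gives $\P(\|[kx]_p-kx\|_1>\epsilon k)\leq C_1 e^{-C_2\epsilon k}$ and hence the desired $o(k)$ bound. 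With those repairs the sketch is sound.
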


The next proposition is our objective of this subsection.
It says that the travel cost and the Lyapunov exponent converge decreasingly to the chemical distance and the time constant, respectively.
As stated in Corollary~\ref{cor:shape} below, this is useful to determine the effective domain of the rate function.

\begin{prop}\label{prop:relation}
Let $p_c<q \leq p \leq 1$.
For all $x,y \in \Z^d$, $\P \hyphen \as$,
\begin{align*}
	\frac{a_\lambda^q(x,y,\omega_p)}{\lambda} \searrow d_{\omega_p}([x]_q,[y]_q) \qquad \text{as} \quad \lambda \to \infty.
\end{align*}
In addition, for all $x \in \R^d$,
\begin{align*}
	\frac{\alpha_\lambda^p(x)}{\lambda} \searrow \mu^p(x) \qquad \text{as} \quad \lambda \to \infty.
\end{align*}
\end{prop}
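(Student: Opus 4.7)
The plan is to first establish the pointwise $\omega$-statement via a sandwich inequality and then lift it to the Lyapunov exponent through Proposition~\ref{prop:modify} and Proposition~\ref{prop:tconst}.

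For the first assertion, the key observation is a refinement of \eqref{eq:chemi}: for any $u,v$ in the same open cluster of $\omega_p$,
\begin{align*}
    \lambda\, d_{\omega_p}(u,v)\le a_\lambda(u,v,\omega_p)\le (\lambda+\log(2d))\, d_{\omega_p}(u,v).
\end{align*}
The upper bound is \eqref{eq:chemi}; the lower bound is immediate from the fact that the simple random walk traverses only open edges, so $H(v)\ge d_{\omega_p}(u,v)$ on $\{H(v)<\infty\}$ and therefore $E_{\omega_p}^u[e^{-\lambda H(v)}\1{\{H(v)<\infty\}}]\le e^{-\lambda d_{\omega_p}(u,v)}$. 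Dividing by $\lambda$ squeezes $a_\lambda/\lambda$ between $d_{\omega_p}(u,v)$ and $(1+\log(2d)/\lambda)\,d_{\omega_p}(u,v)$, giving convergence to $d_{\omega_p}(u,v)$. For monotonicity, $\lambda\mapsto a_\lambda(u,v,\omega_p)=-\log e_\lambda(u,v,\omega_p)$ is concave (since $e_\lambda$ is the Laplace transform of a positive finite measure, so $\log e_\lambda$ is convex), and $a_0(u,v,\omega_p)=-\log P_{\omega_p}^u(H(v)<\infty)\ge 0$; the elementary fact that any concave $g:[0,\infty)\to\R$ with $g(0)\ge 0$ has $\lambda\mapsto g(\lambda)/\lambda$ non-increasing on $(0,\infty)$ (visible from the chord inequality $g(\lambda_1)\ge \tfrac{\lambda_2-\lambda_1}{\lambda_2}g(0)+\tfrac{\lambda_1}{\lambda_2}g(\lambda_2)$) then yields the decreasing behaviour. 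Specializing to $u=[x]_q$, $v=[y]_q$ completes the first assertion.

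For the second assertion, I would apply the sandwich with $u=[0]_q$, $v=[kx]_q$, take expectations (integrability follows from Lemma~\ref{lem:ible} together with $d_{\omega_p}\le d_{\omega_q}$), divide by $k$, and pass to $\inf_{k\ge 1}$; Proposition~\ref{prop:modify} identifies the middle term with $\alpha_\lambda^p(x)/\lambda$, producing
\begin{align*}
    L(x)\le \frac{\alpha_\lambda^p(x)}{\lambda}\le \Bigl(1+\frac{\log(2d)}{\lambda}\Bigr) L(x),
\end{align*}
where $L(x):=\inf_{k\ge 1}\E[d_{\omega_p}([0]_q,[kx]_q)]/k$. Letting $\lambda\to\infty$ sandwiches $\alpha_\lambda^p(x)/\lambda$ to $L(x)$; the decreasing monotonicity in $\lambda$ follows from the concavity of $\lambda\mapsto \alpha_\lambda^p(x)$ (Proposition~\ref{prop:K_lyap}) together with $\alpha_0^p(x)\ge 0$ (inherited from \eqref{eq:lyap_bd}) and the same calculus fact.

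The main obstacle, and the final step, is identifying $L(x)$ with $\mu^p(x)$. Here I would invoke the subadditive ergodic theorem: the process $d_{\omega_p}([ix]_q,[jx]_q)$, $0\le i<j$, is subadditive by the triangle inequality, stationary and ergodic under integer translations in the $x$-direction, and has sublinear first moment by Lemma~\ref{lem:ible}, so $d_{\omega_p}([0]_q,[kx]_q)/k\to L(x)$ almost surely. On the positive-probability event $\{0\in\mathcal{C}_\infty(\omega_q)\}$ one has $[0]_q=0$, and by the Birkhoff ergodic theorem the subsequence $\{k:kx\in\mathcal{C}_\infty(\omega_q)\}$ has positive asymptotic density $\theta(q)$, along which $d_{\omega_p}([0]_q,[kx]_q)=d_{\omega_p}(0,kx)$. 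Since $\mathcal{C}_\infty(\omega_q)\subset\mathcal{C}_\infty(\omega_p)$, Proposition~\ref{prop:tconst} gives convergence of this ratio to $\mu^p(x)$ along this subsequence; matching the full a.s. limit $L(x)$ with the subsequential limit $\mu^p(x)$ (both are non-random constants) forces $L(x)=\mu^p(x)$, completing the proof.
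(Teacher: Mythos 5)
Your proof is correct and takes essentially the same route as the paper. For the pointwise statement the paper also derives the lower bound $a_\lambda\ge\lambda d_{\omega_p}$ and the upper bound from \eqref{eq:chemi}; the one cosmetic difference is the monotonicity step, which the paper gets in a single line from Jensen's inequality applied to $e_{\lambda_1}=E[(e^{-\lambda_2 H}\1{\{H<\infty\}})^{\lambda_1/\lambda_2}]\le e_{\lambda_2}^{\lambda_1/\lambda_2}$, whereas you route through concavity of $-\log e_\lambda$ plus $a_0\ge 0$ plus the chord lemma — equivalent, just slightly longer. For the second assertion the paper interchanges $\inf_\lambda$ and $\inf_k$ on $\E[a_\lambda^q(0,kx,\omega_p)]/(k\lambda)$ and then applies the first assertion plus monotone convergence, while you sandwich $\alpha_\lambda^p(x)/\lambda$ between $L(x)$ and $(1+\log(2d)/\lambda)L(x)$; these are the same computation written in two orders. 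Your identification of $L(x)$ with $\mu^p(x)$ via the subadditive ergodic theorem and the subsequence $\{k:kx\in\mathcal{C}_\infty(\omega_q)\}$ is exactly what the paper means by ``the same strategy as in Proposition~\ref{prop:modify},'' just spelled out. (A trivial slip: ``sublinear first moment'' from Lemma~\ref{lem:ible} should read ``linear,'' which is what the subadditive ergodic theorem needs.)
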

\begin{proof}
We follow the strategy taken in \cite[Proposition~9]{Zer98a}.
First observe that for any $\lambda>0$,
\begin{align*}
	\frac{a_\lambda^q(x,y,\omega_p)}{\lambda}
	\geq -\frac{1}{\lambda} \log E_{\omega_p}^{[x]_q}\bigl[ e^{-\lambda d_{\omega_p}([x]_q,[y]_q)} \1{\{ H([y]_q)<\infty \}} \bigr]
	\geq d_{\omega_p}([x]_q,[y]_q).
\end{align*}
If $0<\lambda_1<\lambda_2$, then by Jensen's inequality,
\begin{align}\label{eq:lamda}
	\frac{a_{\lambda_2}^q(x,y,\omega_p)}{\lambda_2}
	\leq \frac{a_{\lambda_1}^q(x,y,\omega_p)}{\lambda_1}.
\end{align}
Furthermore, by \eqref{eq:chemi},
\begin{align*}
	\limsup_{\lambda \to \infty} \frac{a_\lambda^q(x,y,\omega_p)}{\lambda}
	\leq \limsup_{\lambda \to \infty} \biggl( 1+\frac{\log(2d)}{\lambda} \biggr) d_{\omega_p}([x]_q,[y]_q)
	= d_{\omega_p}([x]_q,[y]_q),
\end{align*}
and the first assertion follows.

For the second assertion, we may assume $x \in \Z^d$.
It follows from \eqref{eq:lamda} and the first assertion that $\alpha_\lambda^p(x)/\lambda$ decreases as $\lambda \to \infty$ to
\begin{align*}
	\inf_{\lambda \in \N} \frac{\alpha_\lambda^p(x)}{\lambda}
	= \inf_{k \in \N} \frac{1}{k} \inf_{\lambda \in \N} \E\biggl[ \frac{a_\lambda^q(0,kx,\omega_p)}{\lambda} \biggr]
	= \inf_{k \in \N} \frac{1}{k} \E[d_{\omega_p}([0]_q,[kx]_q)].
\end{align*}
The most right side is equal to $\mu^p(x)$ by using Proposition~\ref{prop:tconst} and the same strategy as in Proposition~\ref{prop:modify}, and the proof is complete.
\end{proof}

The following corollary is an immediate consequence of Proposition~\ref{prop:relation}.

\begin{cor}\label{cor:shape}
Let $p \in (p_c,1]$.
Then, we have $\mathcal{D}_{I^p}=\{ x \in \R^d:\mu^p(x) \leq 1 \}$.
In particular, $I^p(x) \leq \log(2d)$ holds for $x \in \mathcal{D}_{I^p}$.
\end{cor}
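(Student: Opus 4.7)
The plan is to sandwich $\alpha_\lambda^p(x)$ between $\lambda\mu^p(x)$ and $(\lambda+\log(2d))\mu^p(x)$, after which the two conclusions fall out by inspecting $\sup_{\lambda \geq 0}(\alpha_\lambda^p(x)-\lambda)$ in the two cases $\mu^p(x) \leq 1$ and $\mu^p(x)>1$.

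For the lower inequality $\alpha_\lambda^p(x) \geq \lambda\mu^p(x)$, I would simply quote Proposition~\ref{prop:relation}: the ratio $\alpha_\lambda^p(x)/\lambda$ decreases in $\lambda$ to $\mu^p(x)$, so it exceeds its limit at every positive $\lambda$. For the upper inequality $\alpha_\lambda^p(x) \leq (\lambda+\log(2d))\mu^p(x)$, I would start from the pointwise bound \eqref{eq:chemi} applied to $x=[0]_q$ and $y=[kx]_q$, namely
\begin{align*}
    a_\lambda^q(0,kx,\omega_p) \leq (\lambda+\log(2d))\,d_{\omega_p}([0]_q,[kx]_q),
\end{align*}
take expectations, divide by $k$, and let $k \to \infty$. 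By Proposition~\ref{prop:modify} the left side converges to $\alpha_\lambda^p(x)$, and a routine subadditive-ergodic-theorem argument for the modified chemical distance $d_{\omega_p}([ix]_q,[jx]_q)$ (using integrability from Lemma~\ref{lem:ible} together with Proposition~\ref{prop:tconst}, exactly along the lines of Proposition~\ref{prop:modify}) shows that the right side converges to $(\lambda+\log(2d))\mu^p(x)$.

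With the sandwich $\lambda\mu^p(x) \leq \alpha_\lambda^p(x) \leq (\lambda+\log(2d))\mu^p(x)$ in hand, subtract $\lambda$ to obtain
\begin{align*}
    \lambda(\mu^p(x)-1) \;\leq\; \alpha_\lambda^p(x)-\lambda \;\leq\; \lambda(\mu^p(x)-1)+\log(2d)\,\mu^p(x).
\end{align*}
If $\mu^p(x) \leq 1$, the coefficient $\mu^p(x)-1$ is nonpositive, so the upper estimate gives $\alpha_\lambda^p(x)-\lambda \leq \log(2d)\mu^p(x) \leq \log(2d)$ uniformly in $\lambda \geq 0$, and hence $I^p(x) \leq \log(2d)$, in particular $x \in \mathcal{D}_{I^p}$. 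If instead $\mu^p(x)>1$, the lower estimate gives $\alpha_\lambda^p(x)-\lambda \geq \lambda(\mu^p(x)-1) \to \infty$ as $\lambda \to \infty$, so $I^p(x)=+\infty$ and $x \notin \mathcal{D}_{I^p}$.

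The only nontrivial ingredient is the $L^1$-type convergence $k^{-1}\E[d_{\omega_p}([0]_q,[kx]_q)] \to \mu^p(x)$ used in Step~2; the remaining work is purely algebraic manipulation of the two-sided bound on $\alpha_\lambda^p(x)$. That convergence is, however, a direct transcription of the proof of Proposition~\ref{prop:modify} to the chemical distance, so I do not expect any genuine obstacle.
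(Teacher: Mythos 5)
Your proof is correct and takes essentially the same route as the paper: both establish $\alpha_\lambda^p(x) \leq (\lambda+\log(2d))\mu^p(x)$ from \eqref{eq:chemi} together with the convergence results, and both use Proposition~\ref{prop:relation} for the lower bound $\alpha_\lambda^p(x) \geq \lambda\mu^p(x)$ to conclude $I^p(x)=\infty$ when $\mu^p(x)>1$. The only cosmetic difference is that you derive the upper bound via the $L^1$ limit $k^{-1}\E[d_{\omega_p}([0]_q,[kx]_q)]\to\mu^p(x)$ (which is already established in the proof of Proposition~\ref{prop:relation}), whereas the paper invokes the almost-sure versions of Propositions~\ref{prop:K_lyap} and \ref{prop:tconst} directly; both are valid and equally short.
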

\begin{proof}
Proposition~\ref{prop:tconst}, together with \eqref{eq:chemi} and Proposition~\ref{prop:K_lyap}, proves that for any $\lambda \geq 0$ and $x \in \R^d$,
\begin{align*}
	\alpha_\lambda^p(x) \leq (\lambda+\log(2d))\mu^p(x).
\end{align*}
This enables us to show that for each $\lambda \geq 0$ and $x \in \R^d$ with $\mu^p(x) \leq 1$,
\begin{align*}
	\alpha_\lambda^p(x)-\lambda
	\leq (\lambda+\log(2d))\mu^p(x)-\lambda
	\leq \log(2d).
\end{align*}
Hence, for all $x \in \R^d$ with $\mu^p(x) \leq 1$,
\begin{align*}
	I^p(x)=\sup_{\lambda \geq 0}(\alpha_\lambda^p(x)-\lambda) \leq \log(2d),
\end{align*}
and $\mathcal{D}_{I^p} \supset \{ x \in \R^d:\mu^p(x) \leq 1 \}$ holds.

For the converse inclusion, assume that $x \in \R^d$ satisfies $\mu^p(x)>1$.
Then, Proposition~\ref{prop:relation} implies
\begin{align*}
	I^p(x) \geq \sup_{\lambda>0} \lambda\biggl( \frac{\alpha_\lambda^p(x)}{\lambda}-1 \biggr)
	\geq \sup_{\lambda>0} \lambda(\mu^p(x)-1)=\infty.
\end{align*}
This leads to $\mathcal{D}_{I^p} \subset \{ x \in \R^d:\mu^p(x) \leq 1 \}$, and we complete the proof.
\end{proof}

\subsection{Proof of Theorem~\ref{thm:rate_conti}}\label{subsect:rate_conti}
To prove Theorem~\ref{thm:rate_conti}, let us introduce for each $p \in (p_c,1]$ and $x \in \R^d$,
\begin{align*}
	\lambda_+^p(x):=\sup\{ \lambda>0:\partial_-\alpha_\lambda^p(x) \geq 1 \}
\end{align*}
and
\begin{align*}
	\lambda_-^p(x):=\inf\{ \lambda>0:\partial_-\alpha_\lambda^p(x) \leq 1 \},
\end{align*}
where $\partial_-\alpha_\lambda^p(x)$ is the left-derivative of $\alpha_\lambda^p(x)$ in $\lambda$.
Roughly speaking, the slope of $\alpha_\lambda^p(x)$ in $\lambda$ is equal to one between $\lambda_-^p(x)$ and $\lambda_+^p(x)$, and is strictly larger (resp.~smaller) than one for $\lambda<\lambda_-^p(x)$ (resp.~$\lambda>\lambda_+^p(x)$).
This means that both $\lambda_+^p(x)$ and $\lambda_-^p(x)$ attain the supremum in \eqref{eq:rate_def}, which is the definition of the rate function.

\begin{lem}\label{lem:attain}
We have $\lambda_-^p(x) \leq \lambda_+^p(x)$.
Moreover, if $\lambda_+^p(x)<\infty$, then
\begin{align*}
	I^p(x)=\alpha_{\lambda_+^p(x)}^p(x)-\lambda_+^p(x)=\alpha_{\lambda_-^p(x)}^p(x)-\lambda_-^p(x).
\end{align*}
\end{lem}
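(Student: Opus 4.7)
The approach is to view the lemma as an exercise in one-variable convex analysis for the concave function $\lambda\mapsto \alpha_\lambda^p(x)$. By Proposition~\ref{prop:K_lyap}, this function is concave and continuous on $[0,\infty)$. Standard facts then give that the left-derivative $\partial_-\alpha_\lambda^p(x)$ is well-defined for $\lambda>0$, non-increasing in $\lambda$ and left-continuous, while the right-derivative $\partial_+\alpha_\lambda^p(x)$ is non-increasing and right-continuous, and $\partial_+\alpha_\mu^p(x)\ge \partial_-\alpha_\nu^p(x)$ whenever $0<\mu<\nu$.

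My first step is to establish the characterisations
\begin{align*}
	\partial_-\alpha_\lambda^p(x) \ge 1 \quad&\Longleftrightarrow\quad \lambda \le \lambda_+^p(x),\\
	\partial_-\alpha_\lambda^p(x) \le 1 \quad&\Longleftrightarrow\quad \lambda \ge \lambda_-^p(x).
\end{align*}
In the first line, the ``only if'' direction is immediate from the definition of $\lambda_+^p(x)$ as a supremum, while the ``if'' direction follows from the monotonicity of $\partial_-\alpha_\lambda^p(x)$ in $\lambda$ (for $\lambda<\lambda_+^p(x)$) together with its left-continuity (at the endpoint $\lambda=\lambda_+^p(x)$). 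The second line is analogous. The bound $\lambda_-^p(x)\le \lambda_+^p(x)$ then follows by contradiction, since any $\mu\in(\lambda_+^p(x),\lambda_-^p(x))$ would simultaneously have to satisfy $\partial_-\alpha_\mu^p(x)<1$ and $\partial_-\alpha_\mu^p(x)>1$.

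For the moreover part, I set $g(\lambda):=\alpha_\lambda^p(x)-\lambda$, which is concave and continuous on $[0,\infty)$ with $\partial_- g(\lambda)=\partial_-\alpha_\lambda^p(x)-1$. The characterisations above give $\partial_- g \ge 0$ on $(0,\lambda_+^p(x)]$ and $\partial_- g < 0$ on $(\lambda_+^p(x),\infty)$, so $g$ is non-decreasing on $[0,\lambda_+^p(x)]$ and strictly decreasing on $[\lambda_+^p(x),\infty)$. Assuming $\lambda_+^p(x)<\infty$, the supremum defining $I^p(x)$ is therefore attained at $\lambda_+^p(x)$, yielding $I^p(x)=\alpha_{\lambda_+^p(x)}^p(x)-\lambda_+^p(x)$. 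Symmetrically, $g$ is strictly increasing on $[0,\lambda_-^p(x)]$ and non-increasing on $[\lambda_-^p(x),\infty)$, so $\lambda_-^p(x)$ is also a maximiser; equivalently, combining both characterisations shows $\partial_-\alpha_\lambda^p(x)=1$ on $(\lambda_-^p(x),\lambda_+^p(x))$, so $g$ is in fact constant on $[\lambda_-^p(x),\lambda_+^p(x)]$, which reconfirms the equality of the two expressions.

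The argument is essentially routine convex analysis, so I do not anticipate a substantive obstacle. The only mild care is at the endpoint $\lambda_+^p(x)$, where the passage from the strict inequality (used to define the supremum) to the inclusive inequality $\partial_-\alpha_{\lambda_+^p(x)}^p(x)\ge 1$ relies on the left-continuity of $\partial_-$. The hypothesis $\lambda_+^p(x)<\infty$ is used precisely to guarantee that a finite maximiser of the concave function $g$ exists; without it, the supremum could only be approached at infinity.
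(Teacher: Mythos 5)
Your proof is essentially correct and uses the same underlying ideas as the paper: working with $g(\lambda) := \alpha_\lambda^p(x) - \lambda$, exploiting the concavity and continuity of $\alpha_\lambda^p(x)$ in $\lambda$, and analyzing the sign of $\partial_- g$. Your contradiction argument for $\lambda_-^p(x) \leq \lambda_+^p(x)$ is cleaner than the paper's chain of inclusions, and your derivation of the maximizer via global monotonicity of $g$ on $[0,\lambda_+^p(x)]$ and $[\lambda_+^p(x),\infty)$ is a little more systematic than the paper's, which instead localizes: it picks $\lambda_\epsilon$ slightly below $\lambda_+^p(x)$ and $\lambda_0$ slightly above, bounds $I^p(x)$ by $\sup_{\lambda_\epsilon\le\lambda\le\lambda_0} g(\lambda)$, and shrinks the interval using continuity of $g$. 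That route sidesteps any appeal to continuity of $\partial_-$; yours leans on left-continuity of the one-sided derivative, which is legitimate for concave functions.

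There is, however, a small flaw in your asserted second characterization. You claim $\partial_-\alpha_\lambda^p(x) \le 1 \Longleftrightarrow \lambda \ge \lambda_-^p(x)$ by a proof ``analogous'' to the first, but the analogy breaks at the endpoint: for $\lambda_+^p(x)$ you approach from below and invoke \emph{left}-continuity of $\partial_-$, which holds; for $\lambda_-^p(x)$ the analogous move would require \emph{right}-continuity of $\partial_-$, which a concave function's left-derivative does \emph{not} have. Concretely, if $\partial_-\alpha_\lambda^p(x)$ jumps across $1$ at $\lambda_-^p(x)$, one can have $\partial_-\alpha_{\lambda_-^p(x)}^p(x) > 1$, so the ``$\Leftarrow$'' direction fails at that endpoint. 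This imprecision is harmless to your final conclusion, because the facts you actually use there ($\partial_- g > 0$ on $(0,\lambda_-^p(x))$ and $\partial_- g \le 0$ on the open ray $(\lambda_-^p(x),\infty)$, combined with continuity of $g$ to close the interval; and $\partial_-\alpha_\lambda^p(x) = 1$ on the open interval $(\lambda_-^p(x),\lambda_+^p(x))$) never touch the problematic endpoint. But the characterization as you wrote it should be stated with an open inequality at $\lambda_-^p(x)$, or the endpoint claim dropped, to keep the argument airtight.
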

\begin{proof}
The first assertion is trivial.
This is because
\begin{align*}
	\lambda_-^p(x)
	&\leq \inf\{ \lambda>0:\partial_-\alpha_\lambda^p(x)=1 \}\\
	&\leq \sup\{ \lambda>0:\partial_-\alpha_\lambda^p(x) \geq 1 \}=\lambda_+^p(x).
\end{align*}

For the second assertion, we assume $\lambda_+^p(x)<\infty$.
It follows from the definition of $\lambda_+^p(x)$ that $\partial_-\alpha_{\lambda_0}^p(x)<1$ for any $\lambda_0>\lambda_+^p(x)$.
We use the concavity of $\alpha_\lambda^p(x)$ in $\lambda$ to obtain that for all $\lambda>\lambda_0$,
\begin{align*}
	\frac{\alpha_\lambda^p(x)-\alpha_{\lambda_0}^p(x)}{\lambda-\lambda_0}
	\leq \partial_- \alpha_{\lambda_0}^p(x)<1,
\end{align*}
which proves $\alpha_\lambda^p(x)-\lambda>\alpha_{\lambda_0}^p(x)-\lambda_0$ for $\lambda>\lambda_0>\lambda_+^p(x)$.
On the other hand, for any $\epsilon>0$, one can find $\lambda_\epsilon \in (\lambda_+^p(x)-\epsilon,\lambda_+^p(x)]$ with $\partial_- \alpha_{\lambda_\epsilon}^p(x) \geq 1$.
The concavity also implies that for all $\lambda \in [0,\lambda_\epsilon)$,
\begin{align*}
	\frac{\alpha_{\lambda_\epsilon}^p(x)-\alpha_\lambda^p(x)}{\lambda_\epsilon-\lambda}
	\geq \partial_- \alpha_{\lambda_\epsilon}^p(x) \geq 1,
\end{align*}
and $\alpha_{\lambda_\epsilon}^p(x)-\lambda_\epsilon \geq \alpha_\lambda^p(x)-\lambda$ holds for all $\lambda \in [0,\lambda_\epsilon)$.
Consequently,
\begin{align*}
	I^p(x)
	&\leq \sup_{\lambda_\epsilon \leq \lambda \leq \lambda_0}(\alpha_\lambda^p(x)-\lambda)\\
	&\leq (\alpha_{\lambda_+^p(x)}^p(x)-\lambda_+^p(x))
		+\sup_{\lambda_\epsilon \leq \lambda \leq \lambda_0}
		\bigl\{ (\alpha_\lambda^p(x)-\lambda)-(\alpha_{\lambda_+^p(x)}^p(x)-\lambda_+^p(x)) \bigr\}.
\end{align*}
Note that the function $\lambda \longmapsto \alpha_\lambda^p(x)-\lambda$ is continuous on $[0,\infty)$.
Hence, since $\alpha_{\lambda_+^p(x)}^p(x)-\lambda_+^p(x) \leq I^p(x)$, we have $I^p(x)=\alpha_{\lambda_+^p(x)}^p(x)-\lambda_+^p(x)$ by letting $\epsilon \searrow 0$ and $\lambda_0 \searrow \lambda_+^p(x)$.
A similar argument is applicable for $\lambda_-^p(x)$, and the proof is complete.
\end{proof}

The following lemma gives the lower and the upper semi-continuities of $\lambda_-^p(x)$ and $\lambda_+^p(x)$ in $p$, respectively.
This plays an important role in the proof of Theorem~\ref{thm:rate_conti}.

\begin{lem}\label{lem:lambda}
Suppose that $p$ and $p_n$, $n \geq 1$, belong to $(p_c,1]$.
If $\lambda_+^p(x)<\infty$ and $p_n \to p$ as $n \to \infty$, then
\begin{align}\label{eq:limL1}
	\liminf_{n \to \infty} \lambda_-^{p_n}(x) \geq \lambda_-^p(x)
\end{align}
and
\begin{align}\label{eq:limL2}
	\limsup_{n \to \infty} \lambda_+^{p_n}(x) \leq \lambda_+^p(x).
\end{align}
\end{lem}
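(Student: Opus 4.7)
The plan is to exploit the concavity of $\alpha_\lambda^p(x)$ in $\lambda$ (Proposition~\ref{prop:K_lyap}) together with the pointwise continuity $\alpha_\lambda^{p_n}(x) \to \alpha_\lambda^p(x)$ from Theorem~\ref{thm:lyap_conti}. The underlying principle is that pointwise limits of concave functions inherit one-sided control of their left-derivatives. The technical tool is the standard chord inequality: for any concave function $f$ on $(0,\infty)$ and $0 < \lambda_0 < \lambda_1$,
\[
\partial_- f(\lambda_1) \leq \frac{f(\lambda_1) - f(\lambda_0)}{\lambda_1 - \lambda_0} \leq \partial_- f(\lambda_0).
\]

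For \eqref{eq:limL2}, I would argue by contradiction. If $\limsup_{n \to \infty} \lambda_+^{p_n}(x) > \lambda_+^p(x)$, then along a subsequence we can interpose two fixed parameters with
\[
\lambda_+^p(x) < \lambda_0 < \lambda_1 < \lambda_+^{p_n}(x) \qquad \text{for all large } n.
\]
The right-hand strict inequality and the definition of $\lambda_+^{p_n}(x)$ as a supremum (combined with the monotonicity of $\partial_-\alpha_\cdot^{p_n}(x)$ afforded by concavity) yield $\partial_-\alpha_{\lambda_1}^{p_n}(x) \geq 1$, so the chord inequality gives $\alpha_{\lambda_1}^{p_n}(x) - \alpha_{\lambda_0}^{p_n}(x) \geq \lambda_1 - \lambda_0$. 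Theorem~\ref{thm:lyap_conti} applied at the fixed points $\lambda_0$ and $\lambda_1$ lets us pass to the limit and preserve this as $\alpha_{\lambda_1}^p(x) - \alpha_{\lambda_0}^p(x) \geq \lambda_1 - \lambda_0$. On the other hand, $\lambda_0 > \lambda_+^p(x)$ forces $\partial_-\alpha_{\lambda_0}^p(x) < 1$ strictly, and the other half of the chord inequality yields the strict opposite $\alpha_{\lambda_1}^p(x) - \alpha_{\lambda_0}^p(x) < \lambda_1 - \lambda_0$, a contradiction.

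The argument for \eqref{eq:limL1} is entirely symmetric. Assuming $\liminf_{n \to \infty} \lambda_-^{p_n}(x) < \lambda_-^p(x)$ (which is only nontrivial when $\lambda_-^p(x) > 0$), I would choose $\lambda_-^{p_n}(x) < \lambda_0 < \lambda_1 < \lambda_-^p(x)$ for infinitely many $n$. Then $\partial_-\alpha_{\lambda_0}^{p_n}(x) \leq 1$ by monotonicity and the definition of $\lambda_-^{p_n}(x)$ as an infimum, giving $\alpha_{\lambda_1}^{p_n}(x) - \alpha_{\lambda_0}^{p_n}(x) \leq \lambda_1 - \lambda_0$; passing to the limit preserves the inequality. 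But $\lambda_1 < \lambda_-^p(x)$ forces $\partial_-\alpha_{\lambda_1}^p(x) > 1$ strictly, and the chord bound in the reverse direction yields the strict opposite, a contradiction.

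The main technical subtlety is that Theorem~\ref{thm:lyap_conti} only preserves non-strict inequalities in the limit, whereas the contradiction requires a strict inequality on the limit side. Interposing two auxiliary parameters strictly between $\lambda_+^p(x)$ and the approximating $\lambda_+^{p_n}(x)$ (and analogously for $\lambda_-$) is precisely what opens up the needed gap: the defining property of $\lambda_+^p(x)$ as a supremum guarantees $\partial_-\alpha_\lambda^p(x) < 1$ for every $\lambda > \lambda_+^p(x)$, delivering the strict slope at $\lambda = \lambda_0$. The hypothesis $\lambda_+^p(x) < \infty$ is used implicitly — without it no such $\lambda_0$ could be located — and together with Lemma~\ref{lem:attain} it also forces $\lambda_-^p(x) < \infty$. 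Beyond this bookkeeping, the proof is routine.
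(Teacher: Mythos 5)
Your proof is correct, and it rests on exactly the same two ingredients as the paper's: the pointwise convergence $\alpha_\lambda^{p_n}(x)\to\alpha_\lambda^p(x)$ from Theorem~\ref{thm:lyap_conti} and the concavity of $\lambda\mapsto\alpha_\lambda^p(x)$, combined through chord inequalities for the left-derivative. The paper argues directly (for every $\lambda$ strictly below $\lambda_-^p(x)$ it derives $\liminf_n\lambda_-^{p_n}(x)\geq\lambda-\delta$ by passing $n\to\infty$ in a single difference quotient, then sends $\delta\searrow 0$ and $\lambda\nearrow\lambda_-^p(x)$; similarly for $\lambda_+$), whereas you phrase it as a contradiction with two interposed parameters $\lambda_0<\lambda_1$, which is a cosmetic reorganization of the same estimate. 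Both presentations use the hypothesis $\lambda_+^p(x)<\infty$ only to guarantee that some $\lambda>\lambda_+^p(x)$ exists, and both locate the needed strictness in the definition of $\lambda_\pm^p(x)$ rather than in the limit step, so there is no gap.
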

\begin{proof}
We first observe \eqref{eq:limL1}.
This is trivial in the case $\lambda_-^p(x)=0$.
Hence, we may assume $\lambda_-(x)>0$.
Note that $\partial_- \alpha_\lambda^p(x)>1$ holds for any $\lambda \in (0,\lambda_-^p(x))$.
It follows from Theorem~\ref{thm:lyap_conti} that for any $\lambda \in (0,\lambda_-^p(x))$,
\begin{align*}
	\lim_{\delta \searrow 0} \lim_{n \to \infty} \frac{\alpha_\lambda^{p_n}(x)-\alpha_{\lambda-\delta}^{p_n}(x)}{\delta}
	= \partial_- \alpha_\lambda^p(x)>1.
\end{align*}
This, together with the concavity of $\alpha_\lambda^{p_n}(x)$, yields that if $\lambda \in (0,\lambda_-^p(x))$, then for all small $\delta>0$ there exists $N=N_{\lambda,\delta} \in \N$ such that for all $n \geq N$,
\begin{align*}
	\partial_-\alpha_{\lambda-\delta}^{p_n}(x) \geq \frac{\alpha_\lambda^{p_n}(x)-\alpha_{\lambda-\delta}^{p_n}(x)}{\delta}>1,
\end{align*}
which proves that $\liminf_{n \to \infty} \lambda_-^{p_n}(x) \geq \lambda-\delta$.
Hence, \eqref{eq:limL1} follows by letting $\delta \searrow 0$ and $\lambda \nearrow \lambda_-^p(x)$.

We next treat \eqref{eq:limL2}.
Similarly to the above, if $\lambda>\lambda_+^p(x)$, then there exist $\delta>0$ and $N'=N'_{\lambda,\delta} \in \N$ such that for all $n \geq N'$,
\begin{align*}
	\partial_- \alpha_\lambda^{p_n}(x) \leq \frac{\alpha_\lambda^{p_n}(x)-\alpha_{\lambda-\delta}^{p_n}(x)}{\delta}<1.
\end{align*}
Accordingly, $\limsup_{n \to \infty} \lambda_+^{p_n}(x) \leq \lambda$ holds for all $\lambda>\lambda_+^p(x)$, and \eqref{eq:limL2} follows by letting $\lambda \searrow \lambda_+^p(x)$.
\end{proof}

We finally discuss the finiteness of $\lambda_+^p(x)$ and observe a directional continuity of the rate function uniformly in the law of the percolation configuration.

\begin{lem}\label{lem:interior}
We have $\lambda_+^p(x)<\infty$ for all $x \in (\mathcal{D}_{I^p})^o$.
\end{lem}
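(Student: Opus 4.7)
The plan is to combine Corollary~\ref{cor:shape} with Proposition~\ref{prop:relation} and the concavity of $\alpha_\lambda^p(x)$ in $\lambda$ given by Proposition~\ref{prop:K_lyap}. First I would identify the interior of the effective domain: since $\mu^p$ is a (genuine) norm on $\R^d$ by Proposition~\ref{prop:tconst}, all norms on $\R^d$ are equivalent, and the closed unit ball $\mathcal{D}_{I^p}=\{y:\mu^p(y)\le 1\}$ furnished by Corollary~\ref{cor:shape} has interior exactly $\{y:\mu^p(y)<1\}$ (if $\mu^p(x)=1$ then $\mu^p((1+\epsilon)x)=1+\epsilon>1$ for every $\epsilon>0$, so $(1+\epsilon)x\notin\mathcal{D}_{I^p}$ and $x$ is not interior). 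Therefore any $x\in(\mathcal{D}_{I^p})^o$ satisfies the strict inequality $\mu^p(x)<1$.

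Next I would invoke Proposition~\ref{prop:relation}, which says that $\alpha_\lambda^p(x)/\lambda$ decreases to $\mu^p(x)$ as $\lambda\to\infty$. In particular, $\alpha_\lambda^p(x)/\lambda<1$ for all sufficiently large $\lambda$, so we may pick a reference value $\lambda_0>0$ with $\alpha_{\lambda_0}^p(x)<\lambda_0$. The concavity of $\lambda\mapsto\alpha_\lambda^p(x)$ then gives the chord estimate
\[
	\partial_-\alpha_\lambda^p(x)
	\le \frac{\alpha_\lambda^p(x)-\alpha_{\lambda_0}^p(x)}{\lambda-\lambda_0},\qquad \lambda>\lambda_0,
\]
and the right-hand side converges, as $\lambda\to\infty$, to $\mu^p(x)<1$ (since $\alpha_\lambda^p(x)/\lambda\to\mu^p(x)$ and $\alpha_{\lambda_0}^p(x)/(\lambda-\lambda_0)\to 0$). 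Consequently $\partial_-\alpha_\lambda^p(x)<1$ for every sufficiently large $\lambda$, which by the very definition of $\lambda_+^p(x)$ yields $\lambda_+^p(x)<\infty$.

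I do not anticipate any serious obstacle: the heart of the argument is the elementary fact that for a concave function tending to infinity with asymptotic slope strictly less than $1$, the left-derivative must eventually drop below $1$. The only point that deserves a brief comment is the identification $(\mathcal{D}_{I^p})^o=\{y:\mu^p(y)<1\}$, and this is immediate from $\mu^p$ being an honest norm.
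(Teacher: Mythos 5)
Your proof is correct and follows essentially the same route as the paper: obtain $\mu^p(x)<1$ from Corollary~\ref{cor:shape}, invoke Proposition~\ref{prop:relation} to find $\lambda$ with $\alpha_\lambda^p(x)/\lambda<1$, and then use the concavity chord estimate to drive $\partial_-\alpha_\lambda^p(x)$ below $1$. The only (cosmetic) difference is that the paper concludes at a single fixed $\lambda$ via the chord from $0$ to $\lambda$, namely $\partial_-\alpha_\lambda^p(x)\le(\alpha_\lambda^p(x)-\alpha_0^p(x))/\lambda<1$, while you take a chord from a reference point $\lambda_0$ and pass to the limit $\lambda\to\infty$; both are valid, the paper's being a touch more direct.
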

\begin{proof}
Letting $x \in (\mathcal{D}_{I^p})^o$, one has $\mu^p(x)<1$ from Corollary~\ref{cor:shape}.
Hence, Proposition~\ref{prop:relation} shows that there exists $\lambda>0$ such that $\alpha^p_\lambda(x)/\lambda<1$, which implies
\begin{align*}
	\partial_-\alpha_\lambda^p(x)
	\leq \frac{\alpha_\lambda^p(x)-\alpha_0^p(x)}{\lambda}
	< 1-\frac{\alpha_0^p(x)}{\lambda}<1.
\end{align*}
This means that $\lambda_+^p(x) \leq \lambda<\infty$.
\end{proof}

\begin{lem}\label{lem:Lipschitz}
If $x \in (\mathcal{D}_{I^p})^o$, $p$ and $p_n$, $n \geq 1$, belong to $(p_c,1]$ and $p_n \to p$ as $n \to \infty$, then there exists $L>0$ (which is independent of $p_n$'s) such that for all $\delta \in (0,1)$,
\begin{align*}
	\limsup_{n \to \infty} I^{p_n}(x) \leq \limsup_{n \to \infty} I^{p_n}(\delta x)+L(1-\delta).
\end{align*}
\end{lem}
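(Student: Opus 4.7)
The plan is to exploit the homogeneity of the Lyapunov exponent as a norm, together with the attainment result of Lemma~\ref{lem:attain}, to bound $I^{p_n}(x) - I^{p_n}(\delta x)$ by a constant multiple of $(1-\delta)$. First, since $x \in (\mathcal{D}_{I^p})^o$, Lemma~\ref{lem:interior} gives $\lambda_+^p(x) < \infty$, and then Lemma~\ref{lem:lambda} tells us that $\limsup_{n \to \infty} \lambda_+^{p_n}(x) \leq \lambda_+^p(x)$. In particular, for all sufficiently large $n$ we have $\lambda_+^{p_n}(x) \leq \lambda_+^p(x) + 1 =: \Lambda < \infty$, so that Lemma~\ref{lem:attain} applies and the supremum in the definition of $I^{p_n}(x)$ is attained at $\lambda_n := \lambda_+^{p_n}(x)$, i.e., $I^{p_n}(x) = \alpha_{\lambda_n}^{p_n}(x) - \lambda_n$.

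Next, I would use the fact that $\alpha_{\lambda_n}^{p_n}(\cdot)$ is a norm on $\R^d$, hence positively homogeneous, to write $\alpha_{\lambda_n}^{p_n}(\delta x) = \delta\, \alpha_{\lambda_n}^{p_n}(x)$. Then, testing $\lambda_n$ in the variational formula \eqref{eq:rate_def} for $I^{p_n}(\delta x)$ yields
\begin{align*}
I^{p_n}(\delta x) \geq \alpha_{\lambda_n}^{p_n}(\delta x) - \lambda_n = \delta\,\alpha_{\lambda_n}^{p_n}(x) - \lambda_n,
\end{align*}
and subtracting this from the identity $I^{p_n}(x) = \alpha_{\lambda_n}^{p_n}(x) - \lambda_n$ gives the basic estimate
\begin{align*}
I^{p_n}(x) - I^{p_n}(\delta x) \leq (1-\delta)\, \alpha_{\lambda_n}^{p_n}(x).
\end{align*}

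It remains to bound the right-hand side uniformly in $n$. Choose $q \in (p_c,1)$ with $q < p_n \wedge p$ for all large $n$. Then \eqref{eq:a_ible} in Lemma~\ref{lem:ible} gives
\begin{align*}
\alpha_{\lambda_n}^{p_n}(x) \leq \Cr{ible}^{(q)} (\lambda_n + \log(2d))\|x\|_1
\leq \Cr{ible}^{(q)}(\Lambda + \log(2d))\|x\|_1
\end{align*}
for all large $n$, so setting $L := \Cr{ible}^{(q)}(\Lambda + \log(2d))\|x\|_1$ (which depends only on $p$, $x$ and $q$, not on the particular sequence $p_n$) we obtain $I^{p_n}(x) \leq I^{p_n}(\delta x) + L(1-\delta)$ for all large $n$. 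Taking $\limsup_{n \to \infty}$ on both sides yields the claim.

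The main obstacle is the control of the maximizer $\lambda_n$: without boundedness of $\lambda_n$ the bound from Lemma~\ref{lem:ible} would blow up. This boundedness is exactly what Lemmata~\ref{lem:interior} and \ref{lem:lambda} together deliver, and it is the only place where the hypothesis $x \in (\mathcal{D}_{I^p})^o$ (rather than merely $x \in \mathcal{D}_{I^p}$) is used.
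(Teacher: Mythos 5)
Your proof is correct, but it takes a genuinely different route from the paper's. The paper argues via convexity of the rate function: it first shows that $(1+\eta_0)x$ lies in $\mathcal{D}_{I^{p_n}}$ for all large $n$ (using Corollary~\ref{cor:shape}, Lemma~\ref{lem:interior} and Theorem~\ref{thm:lyap_conti}), writes $x$ as a convex combination of $u:=(1+\eta_0/2)x$ and $\delta x$, and then applies the uniform bound $I^{p_n}(u)\le\log(2d)$ from Corollary~\ref{cor:shape}, giving $L=2\log(2d)/\eta_0$. You instead invoke Lemma~\ref{lem:attain} to express $I^{p_n}(x)=\alpha_{\lambda_n}^{p_n}(x)-\lambda_n$ with $\lambda_n=\lambda_+^{p_n}(x)$, use the homogeneity of the Lyapunov norm to compare with $I^{p_n}(\delta x)$ at the same $\lambda_n$, and then control $\lambda_n$ uniformly via Lemmata~\ref{lem:interior} and \ref{lem:lambda} before closing the bound with \eqref{eq:a_ible}. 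Both arguments are sound and of comparable length; the paper's avoids tracking the maximizer altogether by working at the level of $I^{p_n}$ directly, while yours makes the $(1-\delta)$-Lipschitz estimate transparent through scaling. One small point worth making explicit in your write-up: the $q$ you choose with $q<p_n\wedge p$ can (and should) be taken to depend only on $p$, e.g.\ $q=(p+p_c)/2$, so that $L$ is manifestly independent of the sequence $(p_n)$ as the statement requires; this works because $p_n\to p$ ensures $p_n>q$ for all large $n$.
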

\begin{proof}
Fix $x \in (\mathcal{D}_{I^p})^o$.
Thanks to Corollary~\ref{cor:shape} and Lemma~\ref{lem:interior}, there exists $\eta_0,\lambda_0>0$ (which are independent of $p_n$'s) such that $\alpha_{\lambda_0}^p((1+\eta_0)x)/\lambda_0<1$.
From Theorem~\ref{thm:lyap_conti},
\begin{align*}
	\lim_{n \to \infty}\frac{\alpha_{\lambda_0}^{p_n}((1+\eta_0)x)}{\lambda_0}=\frac{\alpha_{\lambda_0}^p((1+\eta_0)x)}{\lambda_0}<1,
\end{align*}
and it follows that for all large $n$,
\begin{align*}
	\frac{\alpha_{\lambda_0}^{p_n}((1+\eta_0)x)-\alpha_0^{p_n}((1+\eta_0)x)}{\lambda_0}
	\leq \frac{\alpha_{\lambda_0}^{p_n}((1+\eta_0)x)}{\lambda_0}<1.
\end{align*}
This, combined with the concavity of $\alpha_\lambda^{p_n}((1+\eta_0)x)$ in $\lambda$, implies that $(1+\eta_0)x \in \mathcal{D}_{I^{p_n}}$ holds for all large $n$.
Note that $I^{p_n}$ is convex owing to the analogous property of the Lyapunov exponent, and so is $\mathcal{D}_{I^{p_n}}$.
Due to $0 \in \mathcal{D}_{I^{p_n}}$, one has $\eta x \in \mathcal{D}_{I^{p_n}}$ for all $\eta \in [0,1+\eta_0]$.
Denote $u:=(1+\eta_0/2)x \in \mathcal{D}_{I^{p_n}}$ and set $\beta:=\eta_0/\{ 2(1-\delta) \}$ for $\delta \in (0,1)$.
We then have $u=x+\beta(1-\delta)x$, or equivalently
\begin{align*}
	x=\frac{1}{1+\beta}u+\frac{\beta}{1+\beta	}\delta x.
\end{align*}
It follows from the convexity of $I^{p_n}$ that
\begin{align*}
	I^{p_n}(x) \leq \frac{1}{1+\beta}I^{p_n}(u)+\frac{\beta}{1+\beta}I^{p_n}(\delta x).
\end{align*}
This, together with Corollary~\ref{cor:shape}, implies
\begin{align*}
	I^{p_n}(x)-I^{p_n}(\delta x)
	\leq \frac{1}{1+\beta}I^{p_n}(u)
	\leq \frac{2\log(2d)}{\eta_0}(1-\delta),
\end{align*}
and letting $n \to \infty$ proves the lemma.
\end{proof}

After the preparation above, let us prove Theorem~\ref{thm:rate_conti}.

\begin{proof}[\bf Proof of Theorem~\ref{thm:rate_conti}]
It is clear from the definition of the rate function and Theorem~\ref{thm:lyap_conti} that for all $x \in \R^d$,
\begin{align*}
	\liminf_{n \to \infty} I^{p_n}(x) \geq I^p(x).
\end{align*}

Our task is now to prove that for $x \in (\mathcal{D}_{I^p})^o$,
\begin{align}\label{eq:rate_upper}
	\limsup_{n \to \infty}I^{p_n}(x) \leq I^p(x).
\end{align}
The proof is divided into two cases: $\lambda_-^p(x)>0$ and $\lambda_-^p(x)=0$.
We first treat the case $\lambda_-^p(x)>0$.
Fix $\lambda' \in (0,\lambda_-^p(x))$ and $\delta \in (1/\partial_-\alpha_{\lambda'}^p(x),1)$ (since $\partial_- \alpha_{\lambda'}^p(x)>1$, one can take such a $\delta$).
Then,
\begin{align*}
	\partial_- \alpha_{\lambda'}^p(\delta x)
	= \delta \partial_- \alpha_{\lambda'}^p(x)
	> 1.
\end{align*}
On the other hand, $\partial_- \alpha_\lambda^p(\delta x)<1$ holds for all $\lambda>\lambda_-^p(x)$.
With these observations, we obtain $\lambda_-^p(\delta x) \geq \lambda'$ and $\lambda_+^p(\delta x) \leq \lambda_-^p(x) \leq \lambda_+^p(x)<\infty$.
Since $\delta x \in (\mathcal{D}_{I^p})^o$, Lemma~\ref{lem:lambda} shows that
\begin{align*}
	\liminf_{n \to \infty} \lambda_-^{p_n}(\delta x) \geq \lambda_-^p(\delta x) \geq \lambda'
\end{align*}
and
\begin{align*}
	\limsup_{n \to \infty} \lambda_+^{p_n}(\delta x) \leq \lambda_+^p(\delta x) \leq \lambda_-^p(x).
\end{align*}
Therefore, for all $\lambda>\lambda_-^p(x)$,
\begin{align*}
	\limsup_{n \to \infty}I^{p_n}(\delta x)
	&\leq \delta \limsup_{n \to \infty}\bigl( \alpha_{\lambda_+^{p_n}(\delta x)}^{p_n}(x)-\lambda_+^{p_n}(\delta x) \bigr)\\
	&\leq \delta \Bigl( \alpha_\lambda^p(x)-\liminf_{n \to \infty} \lambda_-^{p_n}(\delta x) \Bigr)\\
	&\leq \delta (\alpha_\lambda^p(x)-\lambda').
\end{align*}
Consequently, Lemma~\ref{lem:Lipschitz} proves that for all $\delta \in (1/\partial_- \alpha_{\lambda'}^p(x),1)$ and $\lambda>\lambda_-^p(x)$,
\begin{align*}
	\limsup_{n \to \infty}I^{p_n}(x)
	&\leq \limsup_{n \to \infty}I^{p_n}(\delta x)+L(1-\delta)\\
	&\leq \delta (\alpha_\lambda^p(x)-\lambda')+L(1-\delta).
\end{align*}
Letting $\lambda \searrow \lambda_-^p(x)$ and $\delta \nearrow 1$, one has
\begin{align*}
	\limsup_{n \to \infty}I^{p_n}(x) \leq \alpha_{\lambda_-^p(x)}^p(x)-\lambda'.
\end{align*}
Since $\lambda' \in (0,\lambda_-^p(x))$ is arbitrary, Lemma~\ref{lem:attain} gives 
\begin{align*}
	\limsup_{n \to \infty}I^{p_n}(x) \leq \alpha_{\lambda_-^p(x)}^p(x)-\lambda_-^p(x)=I^p(x),
\end{align*}
and \eqref{eq:rate_upper} follows.

We next treat the case $\lambda_-^p(x)=0$.
Then, $\partial_- \alpha_\lambda^p(x) \leq 1$ holds for all $\lambda>0$.
Hence, for $\lambda>0$ and $\delta \in (0,1)$,
\begin{align*}
	\partial_- \alpha_\lambda^p(\delta x)=\delta \partial_- \alpha_\lambda^p(x) \leq \delta <1.
\end{align*}
This means $\lambda_+^p(\delta x)=0$, and Lemma~\ref{lem:lambda} implies that $\lim_{n \to \infty} \lambda_+^{p_n}(\delta x)=0$.
Therefore, for all $\lambda>0$,
\begin{align*}
	\limsup_{n \to \infty}I^{p_n}(\delta x)
	&\leq \limsup_{n \to \infty}\bigl( \alpha_\lambda^{p_n}(\delta x)-\lambda_+^{p_n}(\delta x) \bigr)\\
	&\leq \alpha_\lambda^p(\delta x)-\liminf_{n \to \infty} \lambda_+^{p_n}(\delta x)\\
	&\leq \alpha_\lambda^p(\delta x).
\end{align*}
Since $\alpha_0^p(x)=I^p(x)$, \eqref{eq:rate_upper} follows from the same argument as in the case $\lambda_-^p(x)>0$, and the proof is complete.
\end{proof}


\begin{thebibliography}{10}

\bibitem{Ahl15}
D.~Ahlberg.
\newblock A {H}su--{R}obbins--{E}rd{\H{o}}s strong law in first-passage
  percolation.
\newblock {\em The Annals of Probability}, 43(4):1992--2025, 2015.

\bibitem{AhnPet16}
S.~W. Ahn and J.~Peterson.
\newblock {Oscillations of quenched slowdown asymptotics for ballistic
  one-dimensional random walk in a random environment}.
\newblock {\em Electron. J. Probab.}, 21:27 pp., 2016.

\bibitem{BerMukOka16}
N.~Berger, C.~Mukherjee, and K.~Okamura.
\newblock Quenched large deviations for simple random walks on percolation
  clusters including long-range correlations.
\newblock {\em Comm. Math. Phys.}, 358(2):633--673, 2018.

\bibitem{ComFukNakYos15}
F.~Comets, R.~Fukushima, S.~Nakajima, and N.~Yoshida.
\newblock Limiting results for the free energy of directed polymers in random
  environment with unbounded jumps.
\newblock {\em Journal of Statistical Physics}, 161(3):577--597, 2015.

\bibitem{ComGanZei00}
F.~Comets, N.~Gantert, and O.~Zeitouni.
\newblock Quenched, annealed and functional large deviations for
  one-dimensional random walk in random environment.
\newblock {\em Probability theory and related fields}, 118(1):65--114, 2000.

\bibitem{Cox80}
J.~T. Cox.
\newblock The time constant of first-passage percolation on the square lattice.
\newblock {\em Advances in Applied Probability}, pages 864--879, 1980.

\bibitem{CoxKes81}
J.~T. Cox and H.~Kesten.
\newblock On the continuity of the time constant of first-passage percolation.
\newblock {\em Journal of Applied Probability}, pages 809--819, 1981.

\bibitem{DemPerZei96}
A.~Dembo, Y.~Peres, and O.~Zeitouni.
\newblock Tail estimates for one-dimensional random walk in random environment.
\newblock {\em Communications in mathematical physics}, 181(3):667--683, 1996.

\bibitem{GanZei98}
N.~Gantert and O.~Zeitouni.
\newblock Quenched sub-exponential tail estimates for one-dimensional random
  walk in random environment.
\newblock {\em Communications in mathematical physics}, 194(1):177--190, 1998.

\bibitem{GarMar04}
O.~Garet and R.~Marchand.
\newblock Asymptotic shape for the chemical distance and first-passage
  percolation on the infinite {B}ernoulli cluster.
\newblock {\em ESAIM: Probability and Statistics}, 8:169--199, 2004.

\bibitem{GarMar10}
O.~Garet and R.~Marchand.
\newblock Moderate deviations for the chemical distance in {B}ernoulli
  percolation.
\newblock {\em Alea}, 7:171--191, 2010.

\bibitem{GarMarProThe17}
O.~Garet, R.~Marchand, E.~B. Procaccia, and M.~Th{\'e}ret.
\newblock Continuity of the time and isoperimetric constants in supercritical
  percolation.
\newblock {\em Electronic Journal of Probability}, 22, 2017.

\bibitem{GredHol94}
A.~Greven and F.~den Hollander.
\newblock Large deviations for a random walk in random environment.
\newblock {\em The Annals of Probability}, pages 1381--1428, 1994.

\bibitem{Gri99_book}
G.~Grimmett.
\newblock {\em Percolation}, volume 321.
\newblock Springer Science \& Business Media, 1999.

\bibitem{Kub12b}
N.~Kubota.
\newblock Large deviations for simple random walk on supercritical percolation
  clusters.
\newblock {\em Kodai Mathematical Journal}, 35(3):560--575, 2012.

\bibitem{Kum14_book}
T.~Kumagai.
\newblock {\em Random walks on disordered media and their scaling limits}.
\newblock Springer, 2014.

\bibitem{Le17}
T.~T.~H. Le.
\newblock On the continuity of lyapunov exponents of random walk in random
  potential.
\newblock {\em Bernoulli}, 23(1):522--538, 2017.

\bibitem{Lig99_book}
T.~M. Liggett.
\newblock {\em Stochastic Interacting Systems: Contact, Voter and Exclusion
  Processes}, volume 324.
\newblock Springer Science \& Business Media, 1999.

\bibitem{Nak18}
S.~Nakajima.
\newblock Concentration results for directed polymer with unbounded jumps.
\newblock {\em ALEA}, 15:1--20, 2018.

\bibitem{Pet08_thesis}
J.~Peterson.
\newblock {\em Limiting distributions and large deviations for random walks in
  random environments}.
\newblock ProQuest, 2008.

\bibitem{Pet10}
J.~Peterson.
\newblock Systems of one-dimensional random walks in a common random
  environment.
\newblock {\em Electronic Journal of Probability}, 15:1024--1040, 2010.

\bibitem{PetZei09b}
J.~Peterson and O.~Zeitouni.
\newblock On the annealed large deviation rate function for a multi-dimensional
  random walk in random environment.
\newblock {\em Alea}, 6:349--368, 2009.

\bibitem{PisPov99}
A.~Pisztora and T.~Povel.
\newblock Large deviation principle for random walk in a quenched random
  environment in the low speed regime.
\newblock {\em The Annals of Probability}, 27(3):1389--1413, 1999.

\bibitem{PisPovZei99}
A.~Pisztora, T.~Povel, and O.~Zeitouni.
\newblock Precise large deviation estimates for a one-dimensional random walk
  in a random environment.
\newblock {\em Probability theory and related fields}, 113(2):191--219, 1999.

\bibitem{RA04}
F.~Rassoul-Agha.
\newblock Large deviations for random walks in a mixing random environment and
  other (non-markov) random walks.
\newblock {\em Communications on pure and applied mathematics},
  57(9):1178--1196, 2004.

\bibitem{RASep11}
F.~Rassoul-Agha and T.~Sepp{\"a}l{\"a}inen.
\newblock Process-level quenched large deviations for random walk in random
  environment.
\newblock In {\em Annales de l'institut Henri Poincar{\'e} (B)}, volume~47,
  pages 214--242, 2011.

\bibitem{RASep15_book}
F.~Rassoul-Agha and T.~Sepp{\"a}l{\"a}inen.
\newblock {\em A course on large deviations with an introduction to Gibbs
  measures}, volume 162.
\newblock American Mathematical Soc., 2015.

\bibitem{RASepYil13}
F.~Rassoul-Agha, T.~Sepp{\"a}l{\"a}inen, and A.~Yilmaz.
\newblock Quenched free energy and large deviations for random walks in random
  potentials.
\newblock {\em Communications on Pure and Applied Mathematics}, 66(2):202--244,
  2013.

\bibitem{Var03}
S.~S. Varadhan.
\newblock Large deviations for random walks in a random environment.
\newblock {\em Communications on Pure and Applied mathematics},
  56(8):1222--1245, 2003.

\bibitem{Yil11}
A.~Yilmaz.
\newblock Equality of averaged and quenched large deviations for random walks
  in random environments in dimensions four and higher.
\newblock {\em Probability theory and related fields}, 149(3-4):463--491, 2011.

\bibitem{YilZei10}
A.~Yilmaz and O.~Zeitouni.
\newblock Differing averaged and quenched large deviations for random walks in
  random environments in dimensions two and three.
\newblock {\em Communications in Mathematical Physics}, 300(1):243--271, 2010.

\bibitem{Zei04_book}
O.~Zeitouni.
\newblock Random walks in random environment.
\newblock In {\em Lectures on probability theory and statistics}, pages
  189--312. Springer, 2004.

\bibitem{Zer98a}
M.~P. Zerner.
\newblock {D}irectional decay of the {G}reen's function for a random
  nonnegative potential on {$\mathbf{Z}^d$}.
\newblock {\em Annals of Applied Probability}, pages 246--280, 1998.

\bibitem{Zer98b}
M.~P. Zerner.
\newblock {L}yapounov exponents and quenched large deviations for
  multidimensional random walk in random environment.
\newblock {\em The Annals of Probability}, 26(4):1446--1476, 1998.

\end{thebibliography}

\end{document}